\def\includegraphics{}
\newtheorem{definition}[]{Definition} 
\newtheorem{theorem}[definition]{Theorem}
\newtheorem{corollary}[definition]{Corollary}
\newtheorem{proposition}[definition]{Proposition}
\newtheorem{remark}[definition]{Remark}
\newtheorem{lemma}[definition]{Lemma}
\theoremstyle{definition}
\newcommand{\noi}{\noindent}
\newcommand{\SL}{\mathrm{SL}}
\newcommand{\SO}{\mathrm{SO}}
\def\CI{{\mathcal I}}
\def\CL{{\mathcal L}}
\def\CM{{\mathcal M}}
\def\CMt{{\mathcal {\tilde M}}}
\def\CO{{\mathcal O}}
\DeclareMathOperator{\Prym}{Prym}
\newcommand{\C}{\mathbb{C}}
\newcommand{\Z}{\mathbb{Z}}
\newcommand{\R}{\mathbb{R}}
\date{\today}
\begin{document}

%
%


\title{Higgs bundles and exceptional isogenies}
\title{Higgs bundles and exceptional isogenies}
 \author{Steven B. Bradlow and Laura P. Schaposnik}

\address{Department of Mathematics, University of Illinois, Urbana, IL 61801, USA}
\email{bradlow@illinois.edu}
\email{schaposnik@illinois.edu}

%
%
%
%


%
\maketitle
\begin{abstract}We explore relations between Higgs bundles that result from isogenies between low-dimensional Lie groups, with special attention to the spectral data for the Higgs bundles. We focus on isogenies onto $\SO(4,\C)$ and $\SO(6,\C)$ and their split real forms.  Using fiber products of spectral curves, we obtain directly the desingularizations of the (necessarily singular) spectral curves associated to orthogonal Higgs bundles. In the case of $\SO(6,\C)$ our construction can be interpreted as a new description of Recillas' trigonal construction.
\end{abstract}
%

 Exceptional isomorphisms between low rank Lie algebras, and the corresponding isogenies between Lie groups, have been a source of fascination since first catalogued by Cartan a century ago (\cite[p.~352-355]{Cartan},  {see} also \cite[p. 519]{helga}). In this paper we explore the implications for Higgs bundles   of the  isogenies between Lie groups of rank 2 and 3: 
 \begin{align}\label{isogenies}
\mathcal{I}_2:\SL(2,\C)\times\SL(2,\C)\rightarrow\SO(4,\C);\\
\mathcal{I}_3:\SL(4,\C)\rightarrow\SO(6,\C)\label{isogenies3};
\end{align} 

 \noi  as well as their restrictions to the split real forms of these complex semsimple groups. 
 
Higgs bundles over a compact Riemann surface $\Sigma$ of genus $g\geq 2$ were introduced  in \cite{N1} {and in a more general setting in \cite{simpson:1988}.}
 For any Lie group $G${,} a $G$-Higgs bundle on $\Sigma$ is a pair $(P,\Phi)$ where $P$ is a holomorphic principal bundle and $\Phi$ (the Higgs field) is a holomorphic section of an associated bundle twisted by $K$, the canonical bundle of the surface $\Sigma$.  If $G$ is a complex group then $P$ is a principal $G$-bundle, but if $G$ is a real form of a complex group then the structure group of $P$ is the complexification of a maximal compact subgroup of $G$.   {In this paper we consider only matrix groups, in particular $G=\SL(n,\C)$ or $\SO(2n,\C)$, and real forms of these groups. In these cases (described  in \S \ref{Fibration}) the Higgs bundles  {can be seen}  as holomorphic vector bundles with extra structure, where the precise nature of the extra structure is determined by the group $G$, and the Higgs fields are appropriately constrained sections of the endomorphism bundle twisted by $K$. }

 The defining data for  {$G$}-Higgs bundle serves to construct $G$-local systems on $\Sigma$. Indeed this is the crux of Non-Abelian Hodge Theory (NAHT), whereby the moduli space of semistable $G$-Higgs bundles on $\Sigma$ is identified with the moduli space of reductive representations of $\pi_1(\Sigma)$ into $G$. 
The implications of any group homomorphism $h:G_1\rightarrow G_2$ are clear for surface group representations, since composition with $h$ induces a map from representations into $G_1$ to representations into $G_2$.  It follows from NAHT that there must be a corresponding induced map between the Higgs bundle moduli spaces, but the transcendental nature of the NAHT correspondence means that the clarity of the induced map on representations does not transfer so easily to the induced map on Higgs bundles.  Our goal is to understand
 this map in the cases where the group homomorphism is given by the above isogenies.

In \cite{N2} Hitchin showed how the defining data of Higgs bundles can be re-encoded into a so-called spectral data set consisting of a ramified covering  $S$ of $\Sigma$  and a bundle on $S$. For the groups of interest in this paper, the spectral bundles are line bundles, and hence lie in Jacobians of the spectral curve. In fact, they must lie in Prym varieties determined by the $S$ and $\Sigma$. {These abelian varieties form the generic fibers in a fibration of the Higgs bundle moduli space over a half-dimensional linear space.}

{In terms of vector bundles, the effect of some low rank isogenies has been studied in \cite{aparicio}. In this paper we expand those result in the case of \eqref{isogenies3} through the use of Hodge star type operators. For both \eqref{isogenies} and  \eqref{isogenies3} we give a novel description of the spectral data correspondences, shedding new light on the maps between Higgs bundles and their moduli spaces.}
 Our key tool for understanding the induced maps on the spectral data for the Higgs bundles is a fibre-product construction, at the heart of which is the following diagram:
\begin{eqnarray}\label{diagram}
\xymatrix{
L_1\ar[d]&\CL=p_1^*L_1\otimes p_2^*L_2\ar[d]&L_2\ar[d]\\
S_1\ar[dr]_{\pi_1}&\ar[l]_{p_1}S_1\times_{\Sigma}S_2\ar[d]^{\pi}\ar[r]^{p_2}&S_2\ar[dl]^{\pi_2}\\
&\Sigma&
}
\end{eqnarray}

In our applications, the curves $S_i$ and line bundles $L_i$ come from spectral data  for $\SL(2,\C)$ or $\SL(4,\C)$-Higgs bundles, while the fiber-produc  $S_1\times_{\Sigma}S_2$, and the line bundle $\mathcal{L}$ yield the spectral data sets for the $\SO(4,\C)$ or $\SO(6,\C)$-Higgs bundles.  This construction clearly has wider applicability than the cases inspired by the isogenies \eqref{isogenies} and \eqref{isogenies3}.
One notable feature of the construction is that it generically yields smooth curves, even though curves defined by spectral data for $\SO(2n,\C)$-Higgs bundles are necessarily singular.  Indeed, the curves provided by our construction are the normalizations of the singular $\SO(2n,\C)$-spectral curves.
%
%
%

  {As seen in \cite{aparicio},  the isogeny $\CI_{2}$  innduces a map  {on $SL(2,\C)$-Higgs bundles $(E_i,\Phi_i)$ given by}  \begin{eqnarray}
\mathcal{I}_2((E_1,\Phi_1), (E_2,\Phi_2)) = \left[E_1\otimes E_2, \Phi_1\otimes 1+1\otimes\Phi_2\right].
 \end{eqnarray} 
  where $E_1\otimes E_2$ has orthogonal structure determined by the symplectic structures on $E_i$.  
Hence, in the case of the rank 2 isogeny $\CI_{2}$, on the generic fibers of the Hitchin fibration for the moduli space $\CM_{SL(2,\C)\times SL(2,\C)}$, our main result is the following (see Propositions \ref{I2spec}, \ref{I2realhiggs}, \ref{season2}):} 

\begin{theorem}\label{teorema2}   Let $S_i$ be the spectral curve of the $SL(2,\C)$-Higgs bundles $(E_i,\Phi_i)$, and \linebreak $L_i\in Prym(S_i,\Sigma)$ the corresponding spectral line bundle. Then the spectral data for the the image   $\mathcal{I}_2((E_1,\Phi_1), (E_2,\Phi_2))$ is given by $(\hat{S}_4,\mathcal{L})$ where 

\begin{itemize}
\item $\hat S_4:=S_1\times_{\Sigma}S_2$ is a smooth ramified fourfold cover, and
\item $\CL:= p_1^*(L_1)\otimes p_2^*(L_2)$ where $p_i:S_1\times_{\Sigma}S_2\rightarrow S_i$ are the projection maps.
\end{itemize}

The map is a $2^{2g}$ fold-coverings onto its images.  Restricted to $SL(2,\R)\times SL(2,\R)$-Higgs bundles, the image lies only in components of $\CM_{SO_0(2,2)}$ in which the Higgs bundles satisfy a topological constraint.
 \end{theorem}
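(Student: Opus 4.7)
I would prove the theorem in four stages, first identifying the spectral curve, then the line bundle, then establishing the covering degree, and finally handling the real form restriction.

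\textbf{Spectral curve.} Locally where the $\SL(2,\C)$-Higgs field $\Phi_i$ has eigenvalues $\pm a_i$, the tensor Higgs field $\Phi=\Phi_1\otimes 1+1\otimes \Phi_2$ on $E_1\otimes E_2$ has the four eigenvalues $\pm a_1\pm a_2$, so its characteristic polynomial factors as $(\eta^2-(a_1+a_2)^2)(\eta^2-(a_1-a_2)^2)$; this cuts out the singular $\SO(4,\C)$-spectral curve $S_4\subset\mathrm{Tot}(K)$. The map $S_1\times_\Sigma S_2\to \mathrm{Tot}(K)$ sending a fiber point with coordinates $(a_1,a_2)$ to $a_1+a_2$ lands in $S_4$, and since the four generic preimages $(\pm a_1,\pm a_2)$ exactly recover the four eigenvalue branches of $S_4$, the curve $\hat{S}_4=S_1\times_\Sigma S_2$ is the normalization of $S_4$. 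Smoothness of $\hat{S}_4$ follows from the Jacobian criterion: a singular point would force $a_1(x)=a_2(x)=0$, but the ramification divisors of $\pi_1$ and $\pi_2$ are the zero schemes of generic sections $a_i^2\in H^0(\Sigma,K^2)$ and are therefore disjoint.

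\textbf{Line bundle correspondence.} Using flat base change for the cartesian square (with $\pi_i$ finite flat), one has $p_{1*}p_2^*L_2\cong \pi_1^*\pi_{2*}L_2=\pi_1^*E_2$, so the projection formula yields
\begin{align*}
\pi_*\CL=\pi_{1*}\bigl(L_1\otimes\pi_1^*E_2\bigr)=\pi_{1*}L_1\otimes E_2=E_1\otimes E_2.
\end{align*}
The tautological action of $p_1^*a_1+p_2^*a_2$ on $\CL$ then pushes forward to $\Phi_1\otimes 1+1\otimes\Phi_2$ by construction. The orthogonal form on $E_1\otimes E_2$ is the tensor product of the two $\SL(2,\C)$-symplectic forms, and I would verify the $\SO(4,\C)$-spectral compatibility by checking that $\CL$ satisfies the appropriate Prym condition with respect to the involution $\hat\tau=(\sigma_1,\sigma_2)$ on $\hat{S}_4$, using that each $L_i$ is already $\sigma_i$-anti-invariant.

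\textbf{Covering degree and real form.} For the $2^{2g}$-fold covering statement, I would identify the kernel of the homomorphism $\Prym(S_1,\Sigma)\times\Prym(S_2,\Sigma)\to\Prym(\hat{S}_4,\Sigma)$, $(L_1,L_2)\mapsto p_1^*L_1\otimes p_2^*L_2$. The image of $J(\Sigma)[2]$ under $\alpha\mapsto(\pi_1^*\alpha,\pi_2^*\alpha)$ lies in the kernel: each $\pi_i^*\alpha$ is $2$-torsion (hence tautologically $\sigma_i$-anti-invariant and in the Prym), and $p_1^*\pi_1^*\alpha\otimes p_2^*\pi_2^*\alpha=\pi^*\alpha^{\otimes 2}=\CO$. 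A dimension count comparing $\dim \Prym(S_1)+\dim\Prym(S_2)$ with $\dim\Prym(\hat{S}_4,\Sigma)$ would show this exhausts the kernel, yielding order $2^{2g}$. For the $\SL(2,\R)\times\SL(2,\R)$ restriction, each factor carries a Toledo/Euler invariant $d_i$, and the topological invariants of the resulting $\SO_0(2,2)$-Higgs bundle (e.g.\ the two Stiefel--Whitney classes) are fixed linear combinations of $d_1$ and $d_2$, so the image lies only in those components of $\CM_{\SO_0(2,2)}$ satisfying the corresponding parity constraint. The main obstacle throughout will be tracking the precise twist appearing in the Prym condition for $\SO(2n,\C)$-spectral data on the normalized curve $\hat{S}_4$, and reconciling it with the naive anti-invariances $\sigma_i^*L_i\cong L_i^{-1}$ inherited from the $\SL(2,\C)$-factors.
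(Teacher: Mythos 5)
Your treatment of the spectral curve and of the spectral line bundle follows essentially the paper's own route (Propositions \ref{propso21} and \ref{I2spec}): the fiber product as the normalization of the singular curve $\eta^4+2(a_1+a_2)\eta^2+(a_1-a_2)^2=0$, smoothness from the genericity that the two branch loci are disjoint, the pushforward of $\CL$ computed by base change plus the projection formula, and the Prym condition deduced from $\sigma_i^*L_i\cong L_i^{-1}$. One bookkeeping caveat: your displayed chain $\pi_*\CL=E_1\otimes E_2$ tacitly uses the convention $E_i=\pi_{i*}L_i$, whereas with the paper's normalization $E_i=\pi_{i*}(L_i\otimes\pi_i^*K^{1/2})$ one gets $\pi_*\CL=E_1\otimes E_2\otimes K^{-1}$, and the missing factor of $K$ is exactly the ramification twist $[\hat R]^{1/2}=\pi^*K$ entering the $\SO(4,\C)$ correspondence \eqref{EfromL-SO} on $\hat S_4$. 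Reconciling these twists is precisely the content of the paper's proof of Proposition \ref{I2spec}; you flag it as an ``obstacle'' and defer it, but it is a short computation that has to be carried out for the statement to be proved.

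The genuine gap is in the $2^{2g}$ covering degree. First, the relevant target of your homomorphism is $\Prym(\hat S_4,\hat S_4/\hat\sigma_4)$, of dimension $6g-6$ (equal to $\dim\Prym(S_1,\Sigma)+\dim\Prym(S_2,\Sigma)$), not $\Prym(\hat S_4,\Sigma)$, which has dimension $11g-11$, so the comparison as you state it does not even match. More importantly, even with the correct target a dimension count only shows the kernel is finite; it cannot distinguish your embedded copy of $J(\Sigma)[2]$ from a strictly larger finite subgroup, so the exact order $2^{2g}$ does not follow from it. You would need an independent argument bounding the kernel, e.g.\ pushing the relation $p_1^*L_1\cong p_2^*L_2^{-1}$ down to $S_1$ and $S_2$, or arguing as the paper does in Proposition \ref{season2}: fix the image data and observe that preimages correspond to choices of square roots $N_1^2\cong M_1\otimes M_2$ and $N_2^2\cong M_1\otimes M_2^{-1}$, each a torsor over $J(\Sigma)[2]$. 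That counting argument is also exactly where the parity constraint $\deg M_1\equiv\deg M_2 \pmod 2$ and the Milnor--Wood bound $|\deg M_i|\le 2g-2$ cutting out the image components come from; your final step on the real form (the invariants $c_1=d_1+d_2$, $c_2=d_1-d_2$) agrees with the paper, but its justification should be tied to that preimage count rather than to the unproved kernel claim.
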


 In the case of the rank 3 isogeny, as seen in \cite{aparicio}, the map $\CI_{3}$   induces a morphism  
 on $\SL(4,\C)$-Higgs bundles $(E,\Phi)$ given by
 \begin{eqnarray}\label{CL3}
\CI_3(E,\Phi) = \left[\Lambda^2 E, \Phi\otimes 1+1\otimes\Phi\right].
\end{eqnarray} 
where $\Lambda^2E$ has orthogonal structure determined by the isomorphism $\Lambda^2E^*\otimes\det(E)\simeq\Lambda^2E$.

This case differs from the previous one in two notable ways.  The first is seen in the restriction of \eqref{CL3} to $\SL(4,\R)$-Higgs bundles, where the map reflects the fact that $\SO(4)$ is not a simple Lie group.  This leads to a decomposition of the bundle $\Lambda^2E$  that is analogous to the decomposition of 2-forms on a Riemannian 4-manifold into self-dual and anti-self-dual forms. 

The other new wrinkle is in the induced map on spectral data where the fiber product in diagram \eqref{diagram} is now taken on two copies of the same covering, say $S$. The resulting curve can never be smooth since it has a diagonally embedded copy of $S$ which intersects other components at the ramification points for the covering $S\rightarrow \Sigma$. It turns out though that this component plays no part in the map induced by $\mathcal{I}_3$. We thus modify our construction by removing the diagonally embedded component in the fiber product of an $\SL(4,\C)$-spectral data set with itself  {(see Section \ref{isosmooth}, Proposition \ref {realI3}, \ref{rank3main}, \ref{propinv1} for details): }
   
   \begin{theorem} \label{teorema1}  Restricted to the Higgs bundles for split real form $\SL(4,\R)\subset\SL(4,\C)$, the induced map yields
\begin{eqnarray}
\CI_3(E,\Phi) = \left[\Lambda^2_+ E\oplus\Lambda^2_-E, \begin{pmatrix}0&\alpha\\-\alpha^{\rm{T}}&0\end{pmatrix}\right].
\nonumber
\end{eqnarray} 

\noi where $\Lambda^2_{\pm}E$ are the $\pm 1$-eigenbundles for an involution $*:\Lambda^2E\rightarrow\Lambda^2E$ determined by an orthogonal structure on $E$, with canonically determined orthogonal structures, and with $\Phi$ and  $\alpha$ related as in Eq.~\eqref{I*phi}.   
  {On the generic fibers of the Hitchin fibration for the moduli space $\CM_{SL(4,\C)}$, 
 given the spectral data $(S,L)$ corresponding to $(E,\Phi)$,} 
  the spectral data corresponding to the $\SO(6,\C)$-Higgs bundle $\CI_3(E,\Phi)$ is   $(\hat{S}_6,\CI_3(L))$, where
\begin{itemize}
\item $\hat{S}_6$ is the symmetrization of the non-diagonal component in the fibre product $S\times_{\Sigma} S$; 
\item $\CI_3(L)$ is a canonical twist of the line bundle generated by 
local sections of $\mathcal{L}=p_1^*(L)\otimes p_2^*(L)$ that are anti-invariant with respect to the symmetry of $S\times_{\Sigma} S$.
\end{itemize}

The map is a $2^{2g}$ fold-coverings onto its images.  Restricted to points representing $SL(4,\R)$-Higgs bundles, the image lies only in components of $\CM_{SO_0(3,3)}$ in which the Higgs bundles satisfy a topological constraint.
\end{theorem}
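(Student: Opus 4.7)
The plan is to handle the theorem in four stages, corresponding to its four assertions: the block structure of the Higgs field in the real case, the identification of the spectral curve $\hat{S}_6$, the description of the line bundle $\mathcal{I}_3(L)$, and the counting of the fibers along with the topological constraint.

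For the first stage, starting from formula \eqref{CL3}, I would use that an $SL(4,\R)$-Higgs bundle comes equipped with an orthogonal structure on $E$ (since the maximal compact of the split form is $SO(4)$, so the complexified structure group is $SO(4,\C)$). Combined with the determinant, this orthogonal form induces a non-degenerate pairing $\Lambda^2 E \otimes \Lambda^2 E \to \det(E) \simeq \mathcal{O}_\Sigma$, and hence a Hodge-star-type involution $*:\Lambda^2 E\to\Lambda^2 E$ with $*^2=\mathrm{id}$. The eigenbundles $\Lambda^2_{\pm}E$ inherit orthogonal structures from the restriction of the pairing on $\Lambda^2 E$. To get the block off-diagonal form, I would verify by a direct local calculation that the derivation $\Phi\otimes 1+1\otimes\Phi$ anticommutes with $*$ in the sense that it swaps the $\pm 1$-eigenbundles, which forces the matrix expression with the block $\alpha$ and its transpose (the transpose appearing because the operator is skew with respect to the orthogonal form on $\Lambda^2 E$).

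For the second stage, on the generic Hitchin fiber the spectral curve $S\subset \mathrm{Tot}(K)$ is smooth, and a point of $S$ over $x\in\Sigma$ is an eigenvalue $\lambda_i$ of $\Phi_x$ (with $\sum_{i=1}^4 \lambda_i=0$). A point of $S\times_\Sigma S$ over $x$ is then an ordered pair $(\lambda_i,\lambda_j)$, and the eigenvalues of $\Phi\otimes 1+1\otimes\Phi$ acting on $\Lambda^2 E$ are the six sums $\lambda_i+\lambda_j$ with $i<j$. The diagonal component of $S\times_\Sigma S$ records the pairs $i=j$ which contribute nothing to $\Lambda^2 E$; removing it gives a $12$-fold cover on which the swap $\sigma(p,q)=(q,p)$ acts freely on the generic fiber. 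Dividing by $\sigma$ produces the desired $6$-fold cover $\hat S_6$, which by construction embeds in $\mathrm{Tot}(K)$ as the zero locus of the characteristic polynomial of $\Phi\otimes 1+1\otimes\Phi$ on $\Lambda^2 E$, hence is the normalization of the $SO(6,\C)$-spectral curve (and is automatically isotropic for the $\pm$ involution since $(\lambda_i+\lambda_j)+(\lambda_k+\lambda_l)=0$ when $\{i,j,k,l\}=\{1,2,3,4\}$).

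For the third stage, I would follow the pattern of Theorem \ref{teorema2}: on the fiber product the natural line bundle is $\mathcal{L}=p_1^*L\otimes p_2^*L$, but here the swap $\sigma$ acts on $\mathcal{L}$ and descends only after projecting to an isotypic component. Since on the diagonal $\mathcal{L}$ restricts to $L^2$ where $\sigma$ acts trivially, it is the \emph{anti-invariant} local sections that pass to the normalized quotient without collapsing onto the removed diagonal. I would show that these anti-invariant sections generate a line bundle on $\hat S_6$; identifying the canonical twist amounts to a careful comparison between the direct image of $\mathcal{L}$ on the quotient and the spectral line bundle recovered from $\Lambda^2 E$ via the BNR correspondence $\pi_*(\mathcal{I}_3(L)\otimes K^?)\simeq \Lambda^2 E$, with the twist absorbing the ramification divisor of $S\times_\Sigma S\to \hat S_6$. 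This identification of the twist is the step I expect to be the main obstacle, because the fiber product acquires extra ramification along the removed diagonal, and keeping track of how this contributes under the quotient by $\sigma$ requires a careful Riemann--Hurwitz and adjunction bookkeeping.

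For the fourth stage, the $2^{2g}$ degree follows because the kernel of $\mathcal{I}_3:SL(4,\C)\to SO(6,\C)$ is $\{\pm I\}$, so tensoring $E$ by a 2-torsion line bundle $J\in\mathrm{Jac}(\Sigma)[2]$ preserves $\det(E)$ and induces an isomorphism $\Lambda^2(E\otimes J)\simeq \Lambda^2 E\otimes J^{\otimes 2}\simeq \Lambda^2 E$ intertwining the Higgs fields; one then checks that no $(E\otimes J,\Phi)$ are identified beyond this $(\Z/2)^{2g}$-action, which on the spectral side amounts to noting that the twist $L\mapsto L\otimes\pi^*J$ is absorbed in passing to the anti-invariant quotient. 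For the real case, I would identify the Stiefel--Whitney classes of $\Lambda^2_+E\oplus\Lambda^2_-E$ in terms of those of the underlying $O(4)$-bundle, using the splitting principle and the standard formulae for $w_i(\Lambda^2 E)$, and observe that only those components of $\mathcal{M}_{SO_0(3,3)}$ whose topological invariants are consistent with this formula can lie in the image, giving the stated topological constraint.
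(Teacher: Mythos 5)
Your stages 1, 2 and 4 track the paper's own route quite closely: the Hodge-star involution $*=q^{-1}\cdot Q$ and the eigenbundle decomposition are exactly Proposition \ref{realI3}, the passage to the off-diagonal component of $S\times_\Sigma S$ and its quotient by the swap is the construction \eqref{hatS6} of Proposition \ref{uno1}, and the $2^{2g}$ count by twisting $E$ with points of $\mathrm{Jac}(\Sigma)[2]$, plus the Stiefel--Whitney constraint, is Proposition \ref{propinv1} (where the paper pins the constraint down as $w_2(\Lambda^2_+E)=w_2(\Lambda^2_-E)$ via \cite[Proposition 1.8]{GZ}, something your ``splitting principle'' gesture would still have to produce). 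One small inaccuracy: the quotient $(S\times_\Sigma S)_0/\tau$ does \emph{not} embed in the total space of $K$; its image there is the singular sextic $S_6$, with distinct unordered pairs $[y_i,y_j]$ and $[y_k,y_l]$ identified over the zeros of the Pfaffian $a_3$. The smooth model is realized in $K\oplus K^2$ via the map $Sym$ of \eqref{symtrans}, and it is the normalization of $S_6$ because the projection is finite and birational, not because it is an embedding.

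The genuine gap is in your stage 3, which is precisely where the theorem's content lies and which you explicitly defer as ``the main obstacle.'' The paper resolves it with two concrete inputs you do not supply. First, the reason the \emph{anti}-invariant sections are the right choice is not the heuristic about $\mathcal{L}|_{\Delta}\simeq L^2$, but the identification \eqref{anyN}: for any line bundle $N$ on $S$ the pushforward to $\Sigma$ of $p_1^*N\otimes p_2^*N$ is $(\pi_*N)\otimes(\pi_*N)=Sym^2(\pi_*N)\oplus\Lambda^2(\pi_*N)$, and the $\tau$-anti-invariant summand pushes down to $\Lambda^2(\pi_*N)$; applied to $N=L\otimes[R]^{1/2}$ this gives $\Lambda^2E$ on the nose, which is what makes the comparison with the BNR correspondence close. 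Second, the ``canonical twist'' is fixed by the ramification identity of Lemma \ref{uno2}, $p_1^{-1}(R)+p_2^{-1}(R)=(\hat{\pi}_\tau)^{-1}(\hat{R}_6)+2R_0$, together with $(\hat\pi_\tau)^*T=[R_0]^{-1}$: this is what converts $\mathcal{N}_-$ into $\mathcal{I}_3(L)\otimes[\hat R_6]^{1/2}$ with $\mathcal{I}_3(L)=\mathcal{L}_-\otimes T^{-1}$ as in \eqref{I3L}, and your appeal to ``Riemann--Hurwitz and adjunction bookkeeping'' stops short of this computation. Finally, you never verify that $\mathcal{I}_3(L)$ lies in $\Prym(\hat S_6,\hat S_6/\sigma)$ (the divisor and norm-map argument of Proposition \ref{PrymMap}); without it $(\hat S_6,\mathcal{I}_3(L))$ is not yet $\SO(6,\C)$-spectral data, since the Prym condition is what supplies the orthogonal structure on the pushdown. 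As written, the proposal reproduces the scaffolding of Propositions \ref{realI3}, \ref{uno1} and \ref{propinv1} but leaves the central identification of Proposition \ref{rank3main} unproved.
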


While the two cases covered by \eqref{isogenies} and \eqref{isogenies3} are by no means the only interesting ones, they have some unique features and   serve to illustrate phenomena that we expect to apply in greater generality.  The group $\SL(2,\R)$ has a distinguished place in any discussion of surface group representations because of its relation to hyperbolic structures and Teichmuller space, while the group $\SO_0(2,2)$ is the isomtery group of the anti-de Sitter space $AdS^3$.   {Moreover, b}oth groups are  split real forms of complex semisimple groups (viz. $\SL(2,\C)$ and $\SO(4,\C)$ respectively) and also groups of Hermitian type. The only other such group is $\mathrm{Sp}(4,\R)$. For both classes of real Lie group, the representation variety or, equivalently, the moduli space of Higgs bundles has a distinguished set of components. In the case of the split real forms, the distinguished components are called ``higher Teichmuller components'' because they generalize the copies of Teichmuller space which occur in the case of $\SL(2,\R)$.  For the groups of Hermitian type, the $G$-Higgs bundles (or the surface group representations) carry a discrete invariant known as a Toledo invariant. This invariant satisfies a so-called Milnor-Wood bound and the distinguished components are those in which the invariant has maximal value.

 {In the two cases considered in this article,}
  the induced maps on the spectral curves yield a pairing between two different ramified coverings of a common base curve, together with isogenies between the associated Prym varieties. The spectral curves in each pair are, moreover, determined by different representations of the same group. This kind of situation, with pairings between coverings of a given curve and isogenies between the associated Prym varieties, has been considered in the context of integrable systems 
  and also   by Donagi (see e.g. \cite{ron, ron2}).   As noted in \cite[Example 3]{ron}  the correspondence we see for the pair $\SL(4,\C)$ and $\SO(6,\C)$, namely between a four-fold covering and a six-fold covering with a fixed-point-free involution, is essentially the correspondence described by Recillas in his trigonal construction \cite{recillas}  and generalized by Donagi in \cite{ron2}.   The novelty in our version of the correspondence --  and resulting relation between Prym varieties -- lies in the use of fiber products to get explicit descriptions of the maps. 
 
The first part of this paper covers background material on Higgs bundles (Section \ref{Fibration}), spectral curves (Section \ref{spectral_higgs}) and the isogenies (Section \ref{Lie}).  In Section \ref{isosingular} we describe the maps induced by the isogeny $\CI_2$ on both the complex group $\SL(2,\C)\times\SL(2,\C)$ and its split real form, and in Section \ref{isosmooth} we do the same for $\CI_3$. We include a discussion of the relation between our construction on spectral data and the trigonal construction of Recillas, and show how the map we obtain between Prym varieties can be interpreted in terms of a correspondence between curves. We conclude, in Section \ref{maps}, with a discussion of maps between moduli spaces.
 
\subsection*{Acknowledgements}
We thank David Baraglia, Ron Donagi, Tamas Hausel, Nigel Hitchin, Sheldon Katz, Herbert Lange, Tom Nevins, Tony Pantev, Mihnea Popa, S. Ramanan, Hal Schenck,  and Michael Thaddeus,  for helpful and enlightening discussions.  Both authors acknowledge support from U.S. National Science Foundation grants DMS 1107452, 1107263, 1107367 ``RNMS: GEometric structures And Representation varieties (the GEAR Network)."


 
\section{Higgs bundles and the Hitchin fibration}\label{Fibration}

Let  $\Sigma$ be a compact Riemann surface of genus $g\geq 2$,  {and}   {$\pi:K:=T^*\Sigma\rightarrow \Sigma$} its canonical bundle. For $G_c$ a complex reductive Lie group with Lie algebra $\mathfrak{g}_c$, from \cite{N1} one has the following definition:

\begin{definition}\label{Gc-Higgs} A $G_c$-Higgs bundle on $\Sigma$ is given by a pair $(P,\Phi)$ for $P$ a principal $G_c$-bundle on $\Sigma$, and $\Phi$ a holomorphic section of ${\rm Ad}P\otimes K$, for ${\rm Ad}P=P\times_{Ad}\mathfrak{g}_c$  the adjoint bundle associated to $P$. \label{def complex}
\end{definition}
 For  $GL(n,\C)$ one recovers classical Higgs bundles as introduced in \cite{N2}. For matrix groups, the definition can be reformulated in terms of vector bundles rather than principal bundles.  In particular{,}
 \begin{itemize}
\item  {An} $SL(n,\C)$-Higgs bundle is a pair $(E,\Phi)$  where  $E$ is a rank $n$ holomorphic bundle on $\Sigma$ with fixed trivial determinant, and $\Phi$   a traceless holomorphic section of $\mathrm{End}(E)\otimes K${;} 
\item  {An} $\SO(n,\C)$-Higgs bundle   is a pair $(E,\Phi)$  where $E$ is a rank $n$ holomorphic bundle on $\Sigma$ with an orthogonal structure $Q$ and a compatible trivialization of its determinant bundle\footnotemark\footnotetext{A trivialization $\delta:\det(E)\simeq\mathcal{O}_{\Sigma}$ is compatible with $Q$ if $\delta^2$ agrees with the trivialization of $(\Lambda^nE)^2$ given by the discriminant $Q:\Lambda^nE\rightarrow\Lambda^nE^*$ (see Remark 2.6 in \cite{ramanan81})}, and $\Phi$ is a holomorphic section of $\mathrm{End}(E)\otimes K$ satisfying  {$Q(u,\Phi v)=-Q(\Phi u, w)$}.
\end{itemize}

 
The construction of $G$-Higgs bundles for a real form $G$ of $G_c$ goes back to \cite{N1, N5} (see also \cite{Go2, bradlow-garcia-prada-gothen:2005}, for further details). The definition requires a choice of a maximal compact subgroup of $H\subset G$,  and the Cartan decomposition  
$\mathfrak{g}=\mathfrak{h}\oplus \mathfrak{m},$
for $\mathfrak{h}$ the Lie algebra of $H$, and $\mathfrak{m}$ its orthogonal complement. 
Note that the Lie algebras satisfy the symmetric space relations
$ [\mathfrak{h}, \mathfrak{h}]\subset\mathfrak{h}$,  $[\mathfrak{h,\mathfrak{m}}]\subset\mathfrak{m}$, and $[\mathfrak{m},\mathfrak{m}]\subset \mathfrak{h}\nonumber
$. Hence there is an induced isotropy representation given by
$\text{Ad}|_{H^{\mathbb{C}}}: H^{\mathbb{C}}\rightarrow GL(\mathfrak{m}^{\mathbb{C}}). $

\begin{definition}\label{real form}
 Given $G$ a real form of a complex Lie group $G_c$, a \text{principal} $G${\em-Higgs bundle} is a pair $(P,\Phi)$ where
  $P$ is a holomorphic principal $H^{\mathbb{C}}$-bundle on $\Sigma$, and 
  $\Phi$ is a holomorphic section of $P\times_{Ad}\mathfrak{m}^{\mathbb{C}}\otimes K$.
\label{defrealhiggs}
\end{definition}
%
%
Alternatively,  {as done in} \cite{N5}, we may regard real Higgs bundles as classical Higgs bundles $(E,\Phi)$,   with extra conditions  reflecting the structure of the real group and its isotropy representation.   In this paper we shall mainly consider 
 $\SL(n,\R)$ and $\SO(n,n)$-Higgs bundles, for which we recall their main properties in Sections \ref{spectral_slr} and \ref{real so} (for further references, see  \cite{aparicio, N5, thesis}).

Following \cite{N1}, a classical Higgs bundle $(E,\Phi)$ is said to be (semi)-stable if  all subbundles $F\subset E$ such that $\Phi(F)\subset F\otimes K$ satisfy $\deg(F)/{\rm rk}(F)< (\leq)\deg(E)/{\rm rk}(E)$. Moreover, the pair is said to be poly-stable if it can be written as a direct sum of stable Higgs bundles $(E_i,\Phi_i)$ for which    $\deg(E_i)/{\rm rk}(E_i)=\deg(E)/{\rm rk}(E)$. The notion of stability can be extended to $G_c$-Higgs bundles as in \cite{N2}, as well as to $G$-Higgs bundles (e.g. see \cite{steve}), and used to construct the corresponding moduli spaces.  As explained below in Remark \ref{allstable} stability considerations will not play a role in this paper.
 We shall denote by $\CM_{G_c}$ and $\CM_{G}$ the moduli space of $G_c$-Higgs bundles and $\CM_{G}$-Higgs bundles, respectively.

%


\subsection{The Hitchin fibration and spectral curves}\label{fibration_hitchin}
Given a homogenous basis $r_1,\ldots,r_k$ of invariant polynomials for the Lie algebra of a complex Lie group $G_c$, let $d_1,\ldots,d_k$ be their degrees. Then, the Hitchin fibration (introduced in \cite{N2}) is defined as  
\begin{eqnarray}\nonumber
h:\mathcal{M}_{G_c}&\rightarrow& \mathcal{A}_{G_c}:=\bigoplus_{i=1}^{k}H^{0}(\Sigma, K^{d_i}) \\
(E,\Phi)&\mapsto& (r_1(\Phi), \ldots, r_k(\Phi)).\nonumber
\end{eqnarray}

A point in the Hitchin base $\mathcal{A}_{G_c}$ {determines a section of the line bundle $\pi^*K^{d_k}$ defined by   {$\eta^{d_k}+\dots+r_{k-1}(\Phi)\eta^{d_1} +r_k(\Phi)$}, where $\eta$ the tautological section of $\pi^*K$.  The zero locus of this section is the so-called spectral curve  {$S$} associated to the Higgs bundle.   {The curve $S$} lies the total space of $K$ and the projection $\pi:S\rightarrow \Sigma$  is a $d_k$-fold cover of $\Sigma$.}  By considering vector bundles on it (or its desingularization), one can give a geometric description  {of} the fibres of the Hitchin fibration \cite{N2}.  In the case of classical Higgs bundles the generic fibres  are given by the Jacobian varieties of the spectral curves  $S${,}  on which $\Phi$ has a single-valued eigenvalue $\eta$.  {Indeed, given a line bundle $L$ on $S$ one may recover the corresponding classical Higgs bundle $(E,\Phi)$ by taking $E:=\pi_*L$  and $\Phi$   the direct image of $ \eta: L \rightarrow L\otimes  \pi^*K$.}
  In Section \ref{spectral_higgs} we describe more fully the spectral data for the specific cases of interest in this paper.

By considering the  embedding of $\mathcal{M}_G$ in  {$\mathcal{M}_{G_c}$}, one may identify the $G$-Higgs bundles as points in the Hitchin fibration satisfying additional constraints (see \cite{N2} for classical complex Lie groups, and  \cite{singapore} and references therein for real Higgs bundles). In particular, in the case of the split real form of $G_c$, the line bundles are the torsion two points in the Jacobian (see \cite[Theorem 4.12]{thesis}).

\begin{remark}
Spectral curves are defined for all Higgs bundles represented by points in the moduli spaces $\mathcal{M}_G$, but they are not necessarily smooth. It follows however from Bertini's theorem that on the generic fiber of the Hitchin fibration the spectral curves are smooth. Throughout the paper we shall consider Higgs bundles over the smooth loci of the Hitchin fibration, i.e. points defining smooth spectral curves, since we will further  {restrict our attention to those curves for which} the most generic type of ramification behaviour occurs. 
\end{remark}

\begin{remark}\label{allstable}
 {Note that} if the spectral curve for a Higgs bundle is smooth then the characteristic polynomial for the Higgs field is irreducible. The Higgs bundle thus has no invariant subbundles and is therefore automatically stable. Hence, whilst stability is needed to define the moduli spaces,  it will play no role in our discussions and we shall omit any further comment on it. 
\end{remark}

\section{Spectral data for complex and real Higgs bundles}\label{spectral_higgs}
We shall recall here how to study the fibres of the Hitchin fibration through spectral data, which we shall do by reviewing the methods introduced in \cite{N2, N5} for complex Higgs bundles, and in \cite{thesis, singapore} for real Higgs bundles. Since our main focus is on $SO(2n,\C)$ and $SL(2n,\C)$-Higgs bundles, we shall restrict attention to those groups and their split real forms. 

\subsection{Spectral data for $SL(n,\C)$-Higgs bundles}\label{spectral_sl} 
  The spectral curve $\pi: S\rightarrow \Sigma$ associated to  {an $SL(n,\mathbb{C})$-Higgs bundle   $(E,\Phi)$} has equation \begin{eqnarray}\label{spectralsl}
\text{det}(I \eta-\Phi)= \eta^{n}+a_{2}\eta^{n-2}+\ldots + a_{n-1}\eta+a_{n}=0\ .
\end{eqnarray}
 Here $a_{i}\in H^{0}(\Sigma,K^{i})$, and  $\eta$ is the tautological section of $\pi^*K$.  The $(n-1)$-tuple $(a_2,\dots,a_n)$ defines a point in the base of the Hitchin fibration  {(see \cite{N2})}. Over a dense open set in the base of the Hitchin fibration, the spectral curve $S$ is smooth and the fiber can be identified  {(biholomorphically)} with  {the Prym variety  ${\rm Prym}(S,\Sigma)$,}  {the}  subset of $\text{Jac}(S)$  of line bundles  whose direct image sheaf has trivial determinant. The relation between $L\in  \mathrm{Prym}(S,\Sigma)$ and $E$  {(once a choice of $K^{1/2}$ is made)} is then

\begin{equation}\label{EfromL}
E:=\pi_*(L\otimes\pi^*(K^{(n-1)/2})).
\end{equation}
  {Moreover,  
$\mathrm{Prym}(S,\Sigma)=\{L\in\mathrm{Jac}(S)\ |\ Nm(L)=\mathcal{O}_{\Sigma}\}
$,  where $Nm$ denotes the norm map defined by the projection $\pi:S\rightarrow\Sigma$,}
 the trivialization of $\det(E)$ determined by the trivialization of $Nm(L)$.   Note that in the case of two-fold covers,  a line bundle is in the Prym variety if and only if its dual is isomorphic to the pull back by the involution switching the sheets of  the cover.

\subsection{Spectral data for $SL(n,\R)$-Higgs bundles}  \label{spectral_slr}

From Definition \ref{real form} {,} Higgs bundles with structure group $SL(n,\R)$ are given by classical Higgs bundles $(E,\Phi)$ together with an oriented orthogonal structure on $E$, with respect to which the Higgs field is traceless and symmetric. 
Moreover, by  \cite[Theorem 4.12]{thesis} one has that the
  intersection of the moduli space $\CM_{SL(n,\R)}$    with  the smooth fibres of the $SL(n,\C)$ Hitchin fibration is given by line bundles $L\in {\rm Prym}(S,\Sigma)$ such that $L^2\cong \mathcal{O}$.  
Following \cite{BNR} and \cite{N2}, a torsion 2 line bundle $L$ induces an $SL(n,\R)$-Higgs bundle $(E,\Phi)$ with $E=\pi_*(L\otimes \pi^*K^{(n-1)/2}) $ and $\Phi$ the push down of the tautological section $\eta$. Moreover, the orthogonal structure on $E$ comes from an $O(1)$ structure on $L${.}
 

\subsection{Spectral data for $SO(2n,\C)$-Higgs bundles} \label{season}   From the characteristic polynomial of  {an $SO(2n,\C)$-Higgs field} $\Phi$ one obtains  a $2n$-fold cover $\pi:S\rightarrow \Sigma$ whose equation is given by
\begin{eqnarray}\text{det}(\eta I-\Phi)=\eta^{2n}+b_{1}\eta^{2n-2}+\ldots+b_{n-1}\eta^{2}+p_{n}^{2}=0,\label{curvesonn}\end{eqnarray}
   {where $\eta$ is as  in \eqref{spectralsl}, the sections} $b_{i}\in H^{0}(\Sigma,K^{2i})$,  and   $p_{n}\in H^{0}(\Sigma, K^{n})$ is the Pfaffian of $\Phi$. This curve  {has a natural involution $\sigma:\eta\mapsto -\eta$, and is singular}  at   $\eta=p_n=0$, i.e., at the fixed points of the  {$\sigma$} . 
The normalization of $S$,  {which we   denote by}
 $\hat{\pi}:\hat{S}\rightarrow \Sigma$, is what we shall refer to as the spectral curve.  {As in the previous cases,} $\hat{S}$  {is generically}  smooth,  {and t}he 
  involution $\sigma$ extends to an involution $\hat{\sigma}$ on $\hat{S}$ which does not have fixed points.  
 
 The generic fibers of the Hitchin fibration can be identified with an abelian variety defined by a connected component of  {$\Prym(\hat{S},\hat{S}/{\sigma})$, i.e.} the kernel of the norm map  {$Nm:Pic(\hat{S})\rightarrow Pic(\hat{S}/{\sigma})$}. Given a line bundle $L\in \Prym(\hat{S},\hat{S}/{\sigma})$  {and a choice of $K^{1/2}$},  the vector bundle $E$ is recovered as 
%
\begin{equation}\label{EfromL-SO}
E:=\pi_*(L\otimes (K_{\hat{S}}\otimes\pi^*(K^*))^{1/2}).
\end{equation}

 {In this case, t}he orthogonal structure on $E$ comes from the isomorphism $\sigma^*{L}\simeq L^{-1}$ (by virtue of which $Nm(L)=0$){.} 
  {T}he spectral data associated to an $SO(2n,\mathbb{C})$-Higgs bundle is defined on the desingularization $\hat{S}$ of $S$. From Eq.~ \eqref{curvesonn}, each pair of points $(b_1,\ldots, b_{n-1}, p_n)$ and $(b_1,\ldots, b_{n-1}, -p_n)$ define {s} the same curve $S$ and desingularization $\hat{S}$.  The Prym variety  $\text{Prym}(\hat{S},\hat{S}/\hat{\sigma})$  has two connected components.   {Moreover,  f}rom   \cite[Lemma 1]{mumford}  all line bundles $L$ on $\hat S$ which satisfy $Nm(L)\simeq \mathcal{O}$ are of the form
$L=N\otimes\hat \sigma^*(N^*),$
 for some line bundle $N$ on $\hat S$ of degree  $0$ or $1$. The two connected components of $\text{Prym}(\hat{S},\hat{S}/\hat{\sigma})$  correspond to the two possibilities for the parity of the degree   $\deg(N)$.  This can also be seen as a reflection of  the fact that $\mathcal{M}_{\SO(2n,\C)}$ has two components corresponding to the possible values for the second Stieffel-Whitney class of an $\SO(2n,\C)$-bundle.
We say that the spectral data is given by $(\hat S, L)$ for $\hat S$ the normalized curve and  $L\in \text{Prym}(\hat{S},\hat{S}/\hat{\sigma})$.

 \begin{remark}\label{ramdiv} For any $m$-fold ramified cover $\pi: S\rightarrow\Sigma$,  the ramification divisor $R$ in $S$ relates the canonical bundles $K_S$ and $K$  of $S$ and $\Sigma$, respectively, by
%
$[R]=K_S\otimes\pi^*K^*,$
where $[R]$ denotes the line bundle defined by the divisor $R$. If $S$ is a spectral curve, i.e. contained in the total space of $K$, then $
K_S= \pi^*K^{m}
$, and $[R]=\pi^*K^{(m-1)}$.  The relation between $L$ and $E$ in both \eqref{EfromL} and\eqref{EfromL-SO} can thus be given\footnotemark
 \footnotetext{Note that the line bundle corresponding to our $L$ is denoted in \cite{N2} by $U$, so that the line bundle denoted in \cite{N2} by $L$ corresponds to $L[R]^{-1/2}$ in our notation.}
 as 
\begin{equation*}
E=\pi_*(L\otimes [R]^{1/2})\ .
\end{equation*}

\end{remark}


\subsection{Spectral data for $\SO_0(n,n)$-Higgs bundles}\label{real so} 
From Definition \ref{real form}  one has that $SO(n,n)$-Higgs bundles can be viewed as $\SO(2n,\C)$-Higgs bundles of the form  $(W_1\oplus W_2, \Phi)$ where $W_i$ are vector bundles of rank $n$ with orthogonal structure, say $q_i$, and \begin{small}
\begin{equation}\Phi=\left(\begin{array}{cc}
              0&\alpha\\
-\alpha^{\rm T}&0
             \end{array}
\right),
\label{higgsalpha}\end{equation} \end{small}
for $\alpha^{\rm T}$ the orthogonal transpose of $\alpha$, i.e. $\alpha^T=q_2^{-1}\cdot\alpha^*\cdot q_1$ where $\alpha^*$ denotes the dual map. By further requiring  that $\det(W_i)\simeq\mathcal{O}$, one obtains an $SO_0(n,n)$ pair.  In this case, from \cite[Theroem 4.12]{thesis} and along the lines of Section \ref{spectral_slr} one has that the intersection of the moduli space $\CM_{SO_0(n,n)}$    with  the smooth fibres of the $\SO(n,\C)$ Hitchin fibration is given by $L\in {\rm Prym}(\hat{S},\hat{S}/\hat{\sigma})$ which satisfy $L^2\cong \mathcal{O}$.
As for classical Higgs bundles, given a torsion two line bundle $L\in {\rm Prym}(\hat{S},\hat{S}/\hat{\sigma})$ one obtains an $SO_0(n,n)$-Higgs bundle $(E,\Phi)$ by taking the direct image of the line bundle $L \otimes (K_{\hat S}\otimes \pi^* K^*)^{1/2}$  and the push forward of  $\eta$.

 Given the spectral data $(\hat{S},L)$ of an   $\SO_0(n,n)$-Higgs bundle, since $L$ is  {of order two and}  in the Prym variety  {of a two fold cover $p:\hat{S}\rightarrow\hat{S}/{\sigma}$,}    it is invariant under the involution $\sigma$ on $\hat{S}$.  Hence its local sections decompose into invariant and anti-invariant local sections {,}       {and thus}  the direct image   decomposes as $p_*L=\hat{L}_1\oplus\hat{L}_2$,   {where the summands are}  generated by local  {invariant and} anti-invariant sections (see \cite{yoU}). 
Moreover, considering the $n$-fold cover 
$\hat \pi: \hat{S}/ \hat \sigma\rightarrow \Sigma, $
 the orthogonal bundles $W_i$ are recovered by taking  $\hat \pi_*(L_i\otimes p_*(K_{\hat S}\otimes \hat \pi^* K^*)^{1/2})$  {for $i=1,2$}.

\section{Homomorphisms of groups and induced maps} \label{Lie}

 As noted in the Introduction, given a fixed surface $\Sigma$ and a  homomorphism between two Lie groups $\Psi:G\rightarrow G'$, there is clearly an induced map 
$$\Psi:\mathrm{Rep}(\Sigma,G)\rightarrow \mathrm{Rep}(\Sigma,G'),$$
\noi where $\mathrm{Rep}(\Sigma,G)$ denotes the space of representations modulo conjugation. The correspondence between surface group representations and Higgs bundles thus implies a similar induced map between $G$-Higgs bundles and $G'$-Higgs bundles.  

From Definition \ref{Gc-Higgs}  one sees that for an isomorphism between complex groups there is in fact an induced map from $G$-Higgs bundles to $G'$-Higgs bundles given by 
\begin{equation}\label{map-principalversionC}
\Psi_*: (P_G,\Phi)\mapsto (P_{G'}=P_G\times_{\Psi}G',\Phi'=d\Psi(\Phi)),
\end{equation}
\noi where 
\begin{equation}\label{dPsi}
d\Psi: ad(P_G)\rightarrow ad(P_{G'})
\end{equation}
\noi  is the map defined by the derivative at the identity of the map $\Psi$, i.e. by the map on Lie algebras.  {Moreover, if the homomorphism $\Psi$ restricts to a map between real forms of $G$ and $G'$ which respects the Cartan decompositions of the Lie algebras,}  then the map $\Psi$ induces a map from $G_r$-Higgs bundles to $G_r'$-Higgs bundles.  From Definition \ref {real form},  {following the notation of Section \ref{Fibration}}, the map is given by
\begin{equation}\label{map-principalversionR}
\Psi_*: (P_{H_{\C}},\Phi)\mapsto (P_{H'_{\C}}=P_{H_{\C}}\times_{\Psi}H_{\C}',\Phi'=d\Psi(\Phi).
\end{equation}
%
%


 The maps in \eqref{map-principalversionC} and \eqref{map-principalversionR} ought to acquire descriptions purely in terms of spectral data {, and this shall be investigated in forthcoming sections}.  The relation between the spectral curves can be deduced from its relation to the eigenvalues of the Higgs field. Thus if the spectral curve $S$ for $(P_{H_{\C}},\Phi)$  is defined by 
\begin{equation}
0=\det(\eta I-\Phi)=\prod_{i=1}^n(\eta-\eta_i),
\end{equation}

\noi where $\{\eta_1,\dots,\eta_n\}$ are the eigenvalues of $\Phi${,} then the spectral curve $S'$ for $\Psi_*(P_{H_{\C}},\Phi)$   {is}  defined by an equation of the same form except with the eigenvalues replaced by the eigenvalues of $d\Psi(\Phi)$, leading to a map $S\mapsto S'$. 

Alternatively  one may seek a more intrinsic understanding of the map purely in terms of the information encoded in the geometry of the covering $S\rightarrow \Sigma$ and the restrictions on the spectral line bundle $L$. This is our goal for the special cases of the isogenies in \eqref{isogenies} and \eqref{isogenies3}.
The isogenies can be described in several ways, including from coincidences of  Dynkin diagrams or  in terms of representations e.g.  in terms of Schur functors as in   \cite[Chapter 9-10]{claudio}. For our purposes,  the representation theoretic point of view is convenient, as described in the next two sections.
 
 \subsection{The isogeny between $\SL(2,\C) \times \SL(2,\C)$ and $ \SO(4,\C)$}\label{Lie2}
 
 The group $SL(2,\C) \times SL(2,\C) $ acts on $\C^2 \otimes \C^2=\C^4$ by 
$(g,h) (v \otimes w) := (gv) \otimes (hw)$. The isogeny onto $\SO(4,\C)$ can be seen as coming from the fact that $\SL(2,\C)\simeq\mathrm{Sp}(2,\C)$. If $\omega$ is the symplectic form on $\C^2$  preserved by matrices with unit determinant, then $Q_4:= \omega \otimes \omega$, defines a symmetric, non-degenerate bilinear form on $\C^2 \otimes \C^2$. Hence one has a map   
\begin{eqnarray}\label{abovemap}
 \CI_2:\SL(2,\C) \times \SL(2,\C)&\rightarrow &\SO(4,\C), \\
 (A_1,A_2)&\mapsto &A_1\otimes A_2,\nonumber
\end{eqnarray} 
\noi where $\SO(4,\C)$ is the group  of (orientation preserving) linear maps ${\C}^{4} \rightarrow {\C}^{4}$ preserving the  form $Q_4$. The derivative at the identity yields an isomorphism of Lie algebras given by
\begin{eqnarray}\label{I2algebra}
 d{\CI_2}:\mathfrak{sl}(2,\C) \times \mathfrak{sl}(2,\C)&\rightarrow& \mathfrak{so}(4,\C), \\
 (\dot{A_1},\dot{A_2})&\mapsto & \dot{A_1}\otimes I+ I\otimes \dot{A_2}.\nonumber
\end{eqnarray} 

\begin{remark}\label{evalsadd}
If $\dot{A_i}$ has eigenvalues $\{\lambda^i_1,\lambda^i_2\}$, for $i=1,2$, then the image $ d{\CI_2} (\dot{A_1},\dot{A_2})$ has eigenvalues $\{\lambda^1_a+\lambda^2_b\ |1\le a,b\le 2\}$.  In particular, if $Tr(\dot{A}_i)=0$ then $\lambda^i_2=-\lambda^i_1$ and the eigenvalues for $ d{\CI_2} (\dot{A_1},\dot{A_2})$ are $\{\pm\lambda^1_1\pm\lambda^2_1\}$.
\end{remark}

Restricted to $\R^2\otimes\R^2=\R^4$, the quadratic form $Q_4$ has signature $(2,2)$ and thus the above map  {\eqref{abovemap}} between complex Lie groups restricts to  
\begin{eqnarray}\CI_2:SL(2,\R)\times SL(2,\R)\rightarrow SO_0(2,2),\label{mapreal22}\end{eqnarray}
  where the subscript in $\SO_0(2,2)$ denotes the connected component of the identity (see  \cite[Section 5.2]{aparicio} for more details). The map on Lie algebras similarly restricts.  {Indeed, given}  $\dot{a}_i\in\mathfrak{sl}(2,\R)$ 
   symmetric and trace-free  {for $i=1,2$}, and  {fixing}  a basis for $\R^4$ such that the orthogonal structure has the form ${\tiny\begin{pmatrix}I&0\\0&-I\end{pmatrix}}$, then the map in \eqref{I2algebra}  {is given by} 
\begin{equation}\label{I2algebra2}
d{\CI_2}(\dot{a}_1,\dot{a}_2)=  \begin{bmatrix}0&\alpha\\\alpha^t&0\end{bmatrix}\in \mathfrak{so}(2,2),
\end{equation}

\noi where $\alpha^t$ denotes the transpose\footnote{Notice that $\alpha^t=-(-I\cdot\alpha^t\cdot I)$ so that in this case $\alpha^t=-\alpha^T$.}. The precise form of $\alpha$ depends on the orientation chosen for $\R^4$, i.e. on the identification $\Lambda^4\R^4\simeq\R$.   {Moreover, whilst }  $\mathrm{Pf}(d\CI_2(\dot{a}_1,\dot{a}_2))^2=\det(\alpha)^2${,}  the sign of $\mathrm{Pf}(d\CI_2(\dot{a}_1,\dot{a}_2))$ depends on the choice of orientation.

%

\subsection{The isogeny between $\SL(4,\C)$ and $\SO(6,\C)$}\label{Lie1}

 The group $SL (4, \C)$ of volume-preserving linear maps $T: {\C}^{4} \rightarrow {\C}^{4}$
 has a   6-dimensional representation on the exterior power $\Lambda^2 \C^4$ given by
$\displaystyle  g( v \wedge w ) := (gv) \wedge (gw)$.  Taking  $n=4$ and $k=2$ in the isomorphism
\begin{equation}\label{QforLambda2}
\Lambda^k(\C^*)\otimes\Lambda^n(\C)\longrightarrow \Lambda^{n-k}(\C),
\end{equation}
 and fixing an identification $\Lambda^4\C^4\simeq\C$ (i.e. fixing a volume form), one gets a bilinear form $Q_6$ on $\Lambda^2 \C^2$ which is symmetric and non-degenerate. Since an element of $SL(4,\C)$ preserves the volume form, it preserves  {a} bilinear form, and thus one has a map 
\begin{eqnarray}
 \CI_3:\SL(4,\C)&\rightarrow &\SO(6,\C) \\
 A&\mapsto &\Lambda^2A,\nonumber
\end{eqnarray}

\noi  where $SO(6,\C)$ is the group  of (orientation preserving) linear maps ${\C}^{6} \rightarrow {\C}^{6}$ preserving the non-degenerate symmetric form $Q_6$.   {Moreover,} the map gives a double cover $SL(4,\C)$ of $SO(6,C)$, thus realising $SL(4,\C)$ as $Spin(6,\C)$.  
\noi The derivative at the identity gives the Lie algebra isomorphism 
\begin{eqnarray}\label{I3algebra}
 d{\CI_3}:\mathfrak{sl}(4,\C)&\rightarrow& \mathfrak{so}(6,\C), \\
 \dot{A}&\mapsto & \dot{A}\otimes I_4+ I_4\otimes \dot{A},\nonumber
\end{eqnarray} 

\noi where $I_4$ denotes the identity map on $\C^4$ and the endomorphism $\dot{A}\otimes I_4+ I_4\otimes \dot{A}$ is understood to be the restriction to $\Lambda^2\C^4\subset\C^4\otimes\C^4$.

\begin{remark}\label{eigensum}
If $\dot{A}$ has eigenvalues $\{\lambda_a\}_{a=1}^{4}$, then as a map on $\C^6=\Lambda^2\C^4$ the image $ d{\CI_3} (\dot{A})$ has eigenvalues  {$\{\lambda_a+\lambda_b\ |1\le a<b\le 4\}$.} \end{remark}

Restricted to $\R^6$, the quadratic form $Q_6$ has signature $(3,3)$ and thus $\CI_3$ restricts to 
\begin{eqnarray}\label{I3group}\CI_3:SL(4,\R)\rightarrow SO_0(3,3),\label{mapreal33}\end{eqnarray}
  with a corresponding restriction of \eqref{I3algebra}. In particular, if $\dot{a}\in\mathfrak{sl}(4,\R)$ is symmetric, trace-free, with entries $a_{ij}$,  {fixing} a basis for $\R^6$ such that the orthogonal structure has the form ${\tiny\begin{pmatrix}I&0\\0&-I\end{pmatrix}}$,  the map has the form
\begin{equation}\label{I3algebrasym}
d\CI_3(\dot{a})=  \begin{bmatrix}0&\alpha\\\alpha^t&0\end{bmatrix}\in \mathfrak{so}(3,3).
\end{equation}
  {As in the previous case, t}he precise form of $\alpha$ depends on the orientation chosen for $\R^6$, i.e. on the identification $\Lambda^6\R^6\simeq\R${, and a} standard choice yields
\begin{equation}\label{alphamatrix}
\alpha=\begin{bmatrix}
a_{13}+a_{24}&-a_{14}+a_{23}&a_{11}+a_{22}\\
-a_{12}+a_{34}&a_{11}+a_{33}&a_{14}+a_{23}\\
-a_{22}-a_{33}&a_{12}+a_{34}&-a_{13}+a_{24}
\end{bmatrix}.
\end{equation}

\noi In all cases, the Pfaffian is given by $Pf(d\CI_3(a))=\pm\det(\alpha)$, where the sign depends on the choice of orientation.

\begin{remark}\label{notsimple}
 {Fixing}  a maximal compact subgroup $\SO(4)\subset \SL(4,\R)${, one obtains} extra structure related to the presence of two inequivalent normal $\SO(3)$ subgroups, by virtue of which $\SO(4)$ fails to be simple.   These subgroups have an important influence on the map induced by $\CI_3$ on $\SL(4,\R)$-Higgs bundles {,}  described in more detail in \S \ref{Higgs33}, and  {considered} also   in \S \ref{inv33}.
\end{remark}


 \section{The rank 2 isogeny and   the Hitchin fibration}\label{isosingular}

In this section we describe the map {corresponding to the isogeny $\mathcal{I}_2$ in \S\ref{Lie2} in terms of spectral data}.   After exploring the maps for the complex groups we examine the extra conditions required to understand the corresponding maps for the split real forms.  
  
\subsection{ $SL(2,\C)\times SL(2,\C)$- and $SL(2,\R)\times SL(2,\R)$-Higgs bundles} \label{specsl2}

An $SL(2,\C)\times SL(2,\C)$-Higgs bundle on $\Sigma$ is defined by a pair of $\SL(2,\C)$-Higgs bundles  $(E_i,\Phi_i)$, for $i=1,2$.  An $SL(2,\R)\times SL(2,\R)$-Higgs bundle can be viewed as a pair of this type in which the bundles have oriented orthogonal structures and the Higgs fields are traceless and symmetric with respect to the orthogonal structures.  Up to isomorphism, we may thus assume in this case that the bundles are of the form $E_i=N_i\oplus N_i^{*}$ where $N_i$ is a line bundle of non-negative degree, the orthogonal structure is defined by the isomorphism 
\begin{equation}\label{qstd}
q=\begin{bmatrix}0&1\\1&0\end{bmatrix}:N_i\oplus N_i^{*}\rightarrow N_i^{*}\oplus N_i,
\end{equation}
 and the Higgs field is of the form 
\begin{equation}\label{higgs22}
\Phi_i=\begin{pmatrix}0&\beta_i\\\gamma_i&0\end{pmatrix}.
\end{equation}
 The characteristic polynomials for the Higgs fields define two spectral curves $\pi_i:S_i\rightarrow \Sigma$ in the total space of $K$ as in Eq.~\eqref{spectralsl}. These are 2-fold covers of the Riemann surface, with equations
$$\eta^2+a_i=0, $$
\noi for $a_i\in H^0(\Sigma, K^2)$ and $\eta$ the tautological section of $\pi^*K$. Generically the quadratic differentials $a_i$ have simple zeros, and thus by Bertini's theorem the curves $S_i$ are generically smooth. As seen in Section \ref{spectral_slr}, the spectral data associated to these $\SL(2,\C)$-Higgs bundles is completed by  line bundles $L_i\in {\rm Prym}(S_i,\Sigma)$.

In the case of $SL(2,\R)\times SL(2,\R)$-Higgs bundles,  by  \cite[Theorem 4.12]{thesis}  the line bundles are required to satisfy $L_i^2\cong \mathcal{O}_{S_i}$.    Note that, since $L_i\in {\rm Prym}(S_i,\Sigma)$ if and only if $\sigma^* L\cong L_i^*$,  the conditions that  $L_i\in {\rm Prym}(S_i,\Sigma)$ and $  L_i^2\cong \mathcal{O}$ are equivalent to the conditions that $\sigma^* L_i\cong L_i$ and $L_i^2\cong \mathcal{O}$.

 \begin{remark}  \label{remToledo1}

 The $SL(2,\R)$-Higgs bundles $(E_i,\Phi_i)$ have associated an integer invariant known as the Toledo invariant.
 This can be defined in several equivalent ways, including as the degree of the line bundle  {$N_i$}  or the Euler number of the $\SO(2)$-principal bundle associated to  {$N_i\oplus N_i^{*}$}. It can also be seen in the spectral data where it is detected by the action of the involution on the fibers of the line bundle at  fixed points (see  \cite{yoU}). However it is defined, the Toledo invariant is bound by a so-called Milnor-Wood inequality.
 \end{remark}


 \subsection{$\SO(4,\C)$-Higgs bundles and  $SO_0(2,2)$-Higgs bundles}\label{so022}   
  An $SO_0(2,2)$-Higgs bundle  can be described as  an $\SO(4,\C)$-Higgs bundle  {$(E,\Phi)$} where  {$E=W_1\oplus W_2$} decomposes as the sum of two rank 2 holomorphic oriented orthogonal bundles, and 
\begin{equation}\label{sonnphi}
\Phi=\left(\begin{array}{cc}
              0&\alpha\\
-\alpha^T&0
             \end{array}
\right),\end{equation}
for  $\alpha^T=q_2^{-1}\circ \alpha^{*}\circ q_1$ where $\alpha^{\rm T}$ is the dual map and $q_i$ is the orthogonal structure of $W_i$.  Furthermore,  Higgs bundles with structure group $\SO_0(2,2)$ have isomorphisms $\delta_i:\Lambda^2 W_i\cong \mathcal{O}$.   As in the previous section, we may take the rank two orthogonal bundles to be of the form
$  W_1=M_1\oplus M_1^{*}$
and $
W_2=M_2\oplus M_2^{*}$, 
for $M_i$ line bundles on $\Sigma$ with $\deg (M_i)\ge 0$, and with orthogonal structure as in \eqref{qstd}.
 

As in Eq.~\eqref{curvesonn}, the Higgs field $\Phi$ in an $\SO(4,\C)$-Higgs bundle defines a 4-fold cover $ \pi_{4}: S_4\rightarrow \Sigma$  with equation 
\begin{eqnarray}\label{P4}
P_4(\eta):=\det(\eta I-\Phi)=\eta^4+b_1\eta^2+b_2\eta+b_2^2=0,\label{PolQ4}
\end{eqnarray}
where  $b_2$ is the Pfaffian {,}  $b_i\in H^0(\Sigma, K^i)$, and $\eta$ the tautological section of $\pi^*K$.
The spectral data for an $SO_0(2,2)$-Higgs bundle is  {given by the spectral data $(\hat S_4,  L)$  of the corresponding complex $SO(4,\C)$-Higgs bundle}   for which $L^2\cong \CO$ \cite[Theorem 4.12]{thesis}.  Moreover, as in Section \ref{real so}, the direct image of $L$ in $\hat S_4/\hat \sigma$ defines two line bundles which induce  $W_i$ on $\Sigma$.


The group $\SO_0(2,2)$ is both a split real form of $\SO(4,\C)$ and a group of Hermitian type.  As a consequence of being a split real form, the Hitchin fibration admits a section which defines the Hitchin component in $\mathcal{M}_{\SO_0(2,2)}$. By virtue of the properties of groups of Hermitian type the Higgs bundles carry Toledo invariants, i.e. discrete invariants which in the case of $\SO_0(2,2)$-Higgs bundles may be taken to be the degrees of the line bundles $M_1$ and $M_2$.  The invariants are bounded by a Milnor-Wood  type inequality which in this case is (e.g. see \cite{bradlow-garcia-prada-gothen:2005} or \cite[Table C.2]{roberto})
\begin{equation}\label{MWSO_0(2,2)}
|\deg(M_i)|\le 2g-2.
\end{equation}

\subsection{The induced map on the Higgs bundles}\label{induced-Higgs-2}

\label{iso22higgs}
 From  Section \ref{Lie2} (see \cite{aparicio} for a detailed study of this), the map on $\SL(2,\C)\times\SL(2,\C)$- Higgs bundles induced by $\CI_2$ is 
 \begin{equation}\label{I*bundles}
 \CI_2((E_1,\Phi_1), (E_2,\Phi_2))=(E_1\otimes E_2,\Phi_1\otimes I+I\otimes\Phi_2),\end{equation}
 \noi where the orthogonal structure on $E_1\otimes E_2$ is the tensor product of the symplectic structures $\omega_i$ on $E_1$ and $E_2$ (recall that $\SL(2,\C)\simeq\mathrm{Sp}(2,\C)$).  
 
 \begin{remark}\label{detiso}Notice that the the isomorphisms $\det(E_i)\simeq\mathcal{O}_{\Sigma}$  do not uniquely determine a trivialization $\delta:\det(E_1\otimes E_2)\simeq \mathcal{O}_{\Sigma}$ compatible with the orthogonal structure on $E_1\otimes E_2$. Indeed, if $\{e_i^1,e_i^2\}$ are local oriented frames for $E_i$ satisfying $\omega_i(e_i^1,e_i^2)=1$, then both
 $\{e_1^1\otimes e^1_2,e_1^1\otimes e^2_2,e_1^2\otimes e^1_2,e_1^2\otimes e^2_2\}$ and $\{e_1^1\otimes e^1_2,e_1^2\otimes e^1_2,e_1^1\otimes e^2_2, e_1^2\otimes e^2_2\}$ are orthonormal local frames for $E_1\otimes E_2$ but they have opposite orientations.  They determine the two inequivalent choices for $\delta:\det(E_1\otimes E_2)\simeq \mathcal{O}_{\Sigma}$. 
\end{remark}

 {Following the notation of Section \ref{specsl2}, t}he map $\CI_2$ can then be put in the form
 \begin{align}\label{I*bundlesR}
 \CI_2((N_1\oplus N_1^{-1},\Phi_1), (N_2\oplus N_2^{-1},\Phi_2))&= \left((M_1\oplus M_1^{-1})\oplus (M_2\oplus M_2^{-1}) ,\left(\begin{array}{cc}0&\alpha\\ -\alpha^{\rm{T}}&0\end{array}\right)\right)  \\
\mathrm{where}\quad\quad  M_1=N_1\otimes N_2\ ,\  M_2&=N_1\otimes N^{-1}_2\ ,
\mathrm{and}\  \alpha=\begin{pmatrix}\beta_2&\beta_1\\\gamma_1&\gamma_2 \label{I*phi}
\end{pmatrix}.
 \end{align}

 \subsection{The induced map on spectral data} \label{iso22curve}
  {W}e  {shall} first construct the spectral data $(\hat S_4, \mathcal{L})$ associated to the  $SO(4,\mathcal{C})$-Higgs bundle obtained via $\CI_2$  {,and}  then specialize to the spectral data associated to the split real forms $\SL(2,\R)\times\SL(2,\R)$ and $\SO_0(2,2)$.

\begin{proposition} \label{propso21}
Let  $(S_i,L_i)$ be the spectral data corresponding to an $SL(2,\C)\times SL(2,\C)$-Higgs bundle, where each $S_i$ is defined by $\eta^2+a_i=0$ for $a_i\in H^0(\Sigma, K^2)$, and $L_i\in \Prym(S_i,\Sigma)$. Then the pair  $(\hat S_4, \mathcal{L})$ given by
\begin{itemize}  
\item  $\hat S_4:=S_1\times_{\Sigma}S_2$  is the fiber product curve, and 
\item $\mathcal{L}= p_1^*(L_1)\otimes p_2^*(L_2)$ is the line bundle 
\end{itemize}
 as in diagram \eqref{diagram}, gives  the spectral data associated to an $SO(4,\C)$-Higgs bundle.

  \end{proposition}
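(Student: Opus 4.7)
The approach I would take is to verify the three ingredients required for $(\hat S_4,\mathcal{L})$ to be valid $\SO(4,\C)$-spectral data as described in \S\ref{season}: the underlying singular curve in $K$ with its involution, the fact that $\hat S_4$ is its normalization, and the Prym condition on $\mathcal{L}$. I would then verify that this data recovers $\CI_2((E_1,\Phi_1),(E_2,\Phi_2))$ through the formula in Remark \ref{ramdiv}. The starting point is Remark \ref{evalsadd}: the eigenvalues of $\Phi_1\otimes I+I\otimes\Phi_2$ are $\{\pm\eta_1\pm\eta_2\}$ where $\eta_i^2=-a_i$. A direct expansion gives the characteristic polynomial
\begin{equation*}
\det(\eta I-\Phi_1\otimes I-I\otimes\Phi_2)=\eta^4+2(a_1+a_2)\eta^2+(a_1-a_2)^2,
\end{equation*}
which has the required $\SO(4,\C)$-form \eqref{PolQ4} with $b_1=2(a_1+a_2)$ and Pfaffian $b_2=\pm(a_1-a_2)$.

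Next I would analyze the natural map
\begin{equation*}
f:\hat S_4=S_1\times_\Sigma S_2\longrightarrow S_4\subset K,\qquad (x,\eta_1,\eta_2)\longmapsto(x,\eta_1+\eta_2),
\end{equation*}
and show it is the normalization. Over a generic point of $\Sigma$ the four eigenvalues $\epsilon_1\eta_1+\epsilon_2\eta_2$ are distinct, so $f$ is bijective. The singular locus of $S_4$ is $\{\eta=0,\ a_1-a_2=0\}$ (a direct Jacobian computation on $P_4$), and at such a point the two branches of the node pull back to the two fibre-product points $(\sqrt{-a_1},-\sqrt{-a_2})$ and $(-\sqrt{-a_1},\sqrt{-a_2})$. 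Smoothness of $\hat S_4$ follows from the Jacobian criterion applied to $\eta_1^2+a_1=\eta_2^2+a_2=0$ inside $K\oplus K$, which fails only at the common zeros of $a_1$ and $a_2$; these do not occur generically in the Hitchin base. The involution $\hat\sigma=\sigma_1\times\sigma_2$ on $\hat S_4$ clearly covers $\eta\mapsto -\eta$ on $S_4$, and is fixed-point-free because fixed points would require $\eta_1=\eta_2=0$, i.e.\ a common zero of $a_1,a_2$.

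For the line bundle, using $p_i\circ\hat\sigma=\sigma_i\circ p_i$ and $\sigma_i^*L_i\cong L_i^{-1}$ (the Prym condition on $S_i$), I would compute
\begin{equation*}
\hat\sigma^*\mathcal{L}=p_1^*(\sigma_1^*L_1)\otimes p_2^*(\sigma_2^*L_2)\cong p_1^*L_1^{-1}\otimes p_2^*L_2^{-1}=\mathcal{L}^{-1},
\end{equation*}
which places $\mathcal{L}$ in $\Prym(\hat S_4,\hat S_4/\hat\sigma)$ and supplies the orthogonal structure on the pushed-forward bundle via the mechanism of \S\ref{season}.

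Finally, I would confirm compatibility with $\CI_2$ by computing the ramification divisor of $\hat{\pi}:\hat S_4\to\Sigma$ directly from adjunction in $K\oplus K$: since $\hat S_4$ is cut out by two sections of $\pi^*K^2$ in a threefold with canonical $\pi^*K^{-1}$, one finds $K_{\hat S_4}=\hat\pi^*K^3$, hence $[R]^{1/2}=\hat\pi^*K$. Then flat base change and the projection formula give
\begin{equation*}
\hat\pi_*\bigl(\mathcal{L}\otimes\hat\pi^*K\bigr)=\hat\pi_*\bigl(p_1^*(L_1\otimes\pi_1^*K^{1/2})\otimes p_2^*(L_2\otimes\pi_2^*K^{1/2})\bigr)=\pi_{1*}(L_1\otimes\pi_1^*K^{1/2})\otimes\pi_{2*}(L_2\otimes\pi_2^*K^{1/2})=E_1\otimes E_2,
\end{equation*}
and the push-forward of $\eta=\eta_1+\eta_2$ matches $\Phi_1\otimes I+I\otimes\Phi_2$. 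The main obstacle I anticipate is the canonical bundle computation for $\hat S_4$ sitting in $K\oplus K$ rather than in $K$: the naive formula $K_{\hat S_4}=\hat\pi^*K^4$ (valid for smooth spectral curves inside $K$) fails here, and getting the correct $\hat\pi^*K^3$ via adjunction in the threefold is essential for the push-forward identity to yield $E_1\otimes E_2$ with no extraneous twist by $K^{1/2}$.
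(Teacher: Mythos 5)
Your proposal is correct and follows essentially the same route as the paper: realize $\hat S_4=S_1\times_\Sigma S_2$ inside $K\oplus K$, check generic smoothness, map by fiberwise addition onto the singular curve $\eta^4+2(a_1+a_2)\eta^2+(a_1-a_2)^2=0$ of which $\hat S_4$ is the normalization, observe that $\hat\sigma_4=(\sigma_1,\sigma_2)$ is generically fixed-point-free and covers $\eta\mapsto-\eta$, and deduce $\hat\sigma_4^*\mathcal{L}\cong\mathcal{L}^{-1}$ from $\sigma_i^*L_i\cong L_i^{-1}$. Your final push-forward verification actually reproves the paper's subsequent Proposition \ref{I2spec}, with the only cosmetic difference that you obtain $[R]=\hat\pi^*K^2$ by adjunction in the total space of $K\oplus K$ rather than by pulling back the ramification divisors $[R_i]=\pi_i^*K$ as the paper does.
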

 \begin{remark}
 The total space of $K\oplus K$ can be identified with the fiber product $K\times_KK\subset K\times K$.  The fiber product $S_1\times_{\Sigma}S_2$ may thus be regarded as a subvariety of either $K\times_KK$ or of the total space of $K\oplus K$.
 \end{remark}
 \begin{proof} 
 The fiber product $\hat S_4$, as a curve in the total space of $K\oplus K$, is defined by the conditions
 \begin{equation}
 \eta_1^2+a_1=\eta_2^2+a_2=0,
 \end{equation}
\noi where $(\eta_1,\eta_2)$ denotes the tautological section on $K\oplus K$.  It follows that for a generic choice of $(a_1,a_2)$ this curve is smooth. Moreover, if $S_4\subset K$ denotes the image of $\hat S_4$ under the map
\begin{equation}\label{+map}
+:K\oplus K\rightarrow K
\end{equation}
given by fiberwise addition, then it is defined by the conditions
  \begin{equation}\label{S4eqtns}
 \eta=\eta_1+\eta_2\ ;\  \eta_1^2+a_1=\eta_2^2+a_2=0.
 \end{equation}
 {Hence,} the four-fold cover  $S_4$ is defined by the equation
\begin{equation}\label{S4curve} 
\eta^4+2(a_1+a_2)\eta^2+(a_1-a_2)^2=0,
\end{equation}
which from \S \ref{real so} and Eq.~\eqref{curvesonn}, is the spectral curve of an $SO(4,\C)$-Higgs bundle.
The curve $S_4$ is generically singular, with singularities over the zeros of $a_1-a_2$, and by construction the 
the map $+:\hat{S}_4\rightarrow S_4$ is  an isomorphism on the smooth locus of $S_4$.

   The involution $\sigma:\eta\mapsto -\eta$ which preserves $S_1$ and $S_2$  induces an involution $(\sigma ,\sigma )$ on $\hat{S}_4=S_1\times_\Sigma S_2$.  {When needed} we shall denote the involution on $S_1, S_2$ by $\sigma_i$, for $i=1,2$ and on $\hat{S}_4$ by $\hat{\sigma}_4$.  The fixed points of $\sigma_i$ are the zeros of $a_i$, and thus, since the zeros of $a_1$ and $a_2$ are generically different,  generically $\hat{\sigma}_4=(\sigma_1,\sigma_2)$ does not have any fixed points. It is clear from \eqref{S4eqtns} that $\hat{\sigma}_4$ descends to the involution $\eta\mapsto -\eta$ on the singular curve $S_4$, where it has fixed points at the branch locus.   
   
   In order to see that $\CL$ is the spectral line bundle associated to  {an} $SO(4,\C)$-Higgs bundle one has to show that  $\mathcal{L}\in\mathrm{Prym}(\hat{S}_4,\hat{S}_4/\hat\sigma_4)$.
 Since  $L_i\in{\rm Prym}(S_i,\Sigma)$ one has that $\sigma_i^*L_i\cong L_i^*$ and so the line bundle $\CL:=p_1^*(L_1)\otimes p_2^*(L_2)$  is sent to its dual by the involution $\hat{\sigma}_4$.  Hence, the line bundle $\CL$
on $\hat S_4$ is in ${\rm Prym}(\hat S_4,\hat S_4/\hat{\sigma}_4)$ as required. 
 \end{proof}

\begin{proposition}\label{I2spec} The spectral data $(\hat S_4,\CL)$ induced by an $SL(2,\C)\times SL(2,\C)$-Higgs bundle $(E_1,\Phi_1),(E_2,\Phi_2)$, as in Proposition \ref{propso21} corresponds to the spectral data of the $SO(4,\C)$-Higgs bundle $\CI_2[(E_1,\Phi_1),(E_2,\Phi_2)]$.
\end{proposition}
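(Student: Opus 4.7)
The plan is to verify two things: that the fiber product curve $\hat{S}_4 = S_1\times_\Sigma S_2$ is the normalization of the characteristic polynomial curve of $\Phi_1\otimes I+I\otimes\Phi_2$, and that pushing the data $(\hat{S}_4,\CL)$ down to $\Sigma$ via the prescription of \S\ref{season} recovers the bundle $E_1\otimes E_2$ together with its tensor-product Higgs field.

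For the spectral curve, I would use Remark \ref{evalsadd}: the eigenvalues of $\Phi_1\otimes I+I\otimes\Phi_2$ are $\{\pm\lambda_1\pm\lambda_2\}$ where $\pm\lambda_i$ are the (traceless) eigenvalues of $\Phi_i$ on $S_i$. Thus the characteristic polynomial of the image Higgs field is exactly
\begin{equation*}
\prod_{\epsilon_1,\epsilon_2=\pm 1}(\eta - \epsilon_1\lambda_1 - \epsilon_2\lambda_2) = \eta^{4} + 2(a_1+a_2)\eta^{2} + (a_1-a_2)^{2},
\end{equation*}
which is the defining equation \eqref{S4curve} of $S_4$ whose normalization was identified with $\hat{S}_4$ in the proof of Proposition \ref{propso21}. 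This matches the general form of an $SO(4,\C)$ spectral curve \eqref{PolQ4}, with the sign ambiguity in the Pfaffian $b_2=\pm(a_1-a_2)$ accounting for the two possible compatible trivializations of $\det(E_1\otimes E_2)$ noted in Remark \ref{detiso}.

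For the line bundle, I would use the projection formula and flat base change applied to the fiber product square in diagram \eqref{diagram}. Flat base change gives $p_{1*}p_2^*L_2 = \pi_1^*\pi_{2*}L_2$, and then the projection formula yields
\begin{equation*}
\pi_*\CL = \pi_{1*}p_{1*}(p_1^*L_1\otimes p_2^*L_2) = \pi_{1*}(L_1)\otimes \pi_{2*}(L_2).
\end{equation*}
Next I would compute $K_{\hat{S}_4}$ using Hurwitz applied to the 2-fold cover $p_1\colon\hat{S}_4\to S_1$: its branch locus in $S_1$ is $\pi_1^*[B_{\pi_2}]$ with $[B_{\pi_2}]=K^2$, so the ramification class satisfies $2[R_{p_1}]=\pi^*K^2$, giving $[R_{p_1}]=\pi^*K$ and hence $K_{\hat{S}_4}=p_1^*K_{S_1}\otimes[R_{p_1}]=\pi^*K^{3}$. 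Therefore $(K_{\hat{S}_4}\otimes\pi^*K^*)^{1/2}=\pi^*K$, and using $p_i^*\pi_i^*=\pi^*$ the projection formula gives
\begin{equation*}
\pi_*\bigl(\CL\otimes (K_{\hat{S}_4}\otimes\pi^*K^*)^{1/2}\bigr) = \pi_*(\CL)\otimes K = \pi_{1*}(L_1\otimes\pi_1^*K^{1/2})\otimes\pi_{2*}(L_2\otimes\pi_2^*K^{1/2}) = E_1\otimes E_2,
\end{equation*}
in agreement with \eqref{EfromL-SO} and \eqref{EfromL}. Since the tautological section on $\hat{S}_4$ factors as $\eta=p_1^*\eta_1+p_2^*\eta_2$ (cf.\ \eqref{+map} and \eqref{S4eqtns}), its push-forward through the previous identifications becomes $\Phi_1\otimes I+I\otimes\Phi_2$, matching \eqref{I*bundles}.

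Finally, for the orthogonal structure, I would note that since $L_i\in\Prym(S_i,\Sigma)$ we have $\sigma_i^*L_i\cong L_i^*$, so
\begin{equation*}
\hat{\sigma}_4^*\CL = p_1^*\sigma_1^*L_1\otimes p_2^*\sigma_2^*L_2 \cong \CL^*,
\end{equation*}
placing $\CL$ in $\Prym(\hat{S}_4,\hat{S}_4/\hat{\sigma}_4)$ and inducing the orthogonal structure on $\pi_*(\CL\otimes\pi^*K)$ in the manner of \S\ref{season}. This structure descends from the pairing $\sigma_1^*L_1\otimes L_1\to\mathcal{O}_{S_1}$ together with the analogous pairing on $S_2$, which corresponds precisely to the tensor product $\omega_1\otimes\omega_2$ of symplectic forms used in \eqref{I*bundles}. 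The main subtlety, and the step I expect to be most delicate, is matching the pushforward orthogonal structure with $\omega_1\otimes\omega_2$ under the chosen trivialization of $\det(E_1\otimes E_2)$; by Remark \ref{detiso} this identification is determined only up to a sign, which is consistent with the sign ambiguity in the Pfaffian $b_2=\pm(a_1-a_2)$ noted above.
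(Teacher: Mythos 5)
Your proposal is correct and follows essentially the same route as the paper: identify the characteristic polynomial of $\Phi_1\otimes I+I\otimes\Phi_2$ with \eqref{S4curve}, then use the identity $\pi_*(p_1^*L_1\otimes p_2^*L_2)=\pi_{1*}L_1\otimes\pi_{2*}L_2$ together with the computation $(K_{\hat S_4}\otimes\pi^*K^*)^{1/2}=\pi^*K$ to recover $E_1\otimes E_2$ via \eqref{EfromL-SO}. The only difference is that you supply more justification (flat base change and the projection formula for the push-forward identity, Hurwitz for $K_{\hat S_4}$, and the explicit remarks on the Higgs field, involution and Pfaffian sign), which the paper either states without proof or defers to Proposition \ref{propso21} and Remark \ref{detiso}.
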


\begin{proof} As seen in Proposition \ref{propso21} the spectral curve $\hat S_4$ is the curve associated to an $SO(4,\C)$-Higgs bundle. Furthermore, from Eq.\eqref{S4curve} the curve is indeed the one associated to the Higgs bundle in the image of $(E_1,\Phi_1),(E_2,\Phi_2)$ through $\CI_2$. With the notation of \eqref{diagramR} below, in order to corroborate that the line bundle $\CL$ is indued the spectral line bundle of the image $SO(4,\C)$-Higgs bundle, note that for any line bundles  {$F_i$}
on $S_i$  {one has} 
 \begin{equation*}
\pi_*(p_1^*(F_1)\otimes p_2^*(F_2))=(\pi_1)_*(F_1)\otimes (\pi_2)_*(F_2).
\end{equation*}
\noi Applying this to the line bundles $L_i$ and using the relation \eqref{EfromL} with $n=2$,  we get
\begin{equation*}
\pi_*(\mathcal{L})=(\pi_1)_*(L_1)\otimes (\pi_2)_*(L_2)=E_1\otimes E_2\otimes K^{-1}\ ,
\end{equation*}
\noi whereas by \eqref{EfromL-SO} the vector bundle on $\Sigma$ defined by $\mathcal{L}$ is 
\begin{equation*}
E= \pi_*(\mathcal{L}\otimes (K_{\hat{S}_4}\otimes\pi^*K^*)^{1/2})\ .
\end{equation*}
\noi Recall that $K_{\hat{S}_4}\otimes\pi^*K^*$ corresponds to the ramification divisor $R\subset \hat{S}_4$, while the ramification divisors $R_i\subset S_i$ satisfy $[R_i]=\pi_i^*K$. It follows that
\begin{equation}
K_{\hat{S}_4}\otimes\pi^*K^*=[R]=p_1^*[R_1]\otimes p_2^*[R_2] = (\pi^*K)^2,
\end{equation}
 and hence that $E=\pi_*(\mathcal{L})\otimes K=E_1\otimes E_2$ as required.
\end{proof}

\subsection{The restriction to  $\SL(2,\R)\times\SL(2,\R)$}\label{realform22}
  {Let $(S_i,L_i)$ be the spectral data of an $\SL(2,\R)$-Higgs bundles, for $i=1,2$.}
  {T}hen one has that  $\CL^2\cong p_1^2(L_1)\otimes p_2^2(L_2)\cong \mathcal{O}$, i.e.
 \begin{eqnarray}\label{22prym}
 \CL\in P_{\hat{\sigma}_4}[2]:= \{M\in {\rm Prym}(\hat S_4,\hat S_4/\hat{\sigma}_4)~|~M^2\cong \mathcal{O}\}.\end{eqnarray}
\noi Thus as seen in Section \ref{real so} the line bundle $\mathcal{L}$ defines an $SO_0(2,2)$-Higgs bundle.

Since $\sigma_i^*{L_i}\simeq L_i^{-1}\simeq L_i$, it follows that $\hat{\sigma}_4^*{\mathcal{L}}\simeq\mathcal{L}^{-1}\simeq\mathcal{L}$. This means that under the projection $p:\hat{S}_4\rightarrow\hat{S}_4/{\hat{\sigma}_4}$ the direct image sheaf $p_*\mathcal{L}$ splits as the sum of two line bundles $\mathcal{L}_{\pm}$,   generated by $\hat{\sigma}_4$-invariant and anti-invariant local sections. The relation between the different covers of the Riemann surface and the line bundles on them is depicted in the following diagram:
\begin{eqnarray}\label{diagramR}
\xymatrix{
\CL=p_1^*L_1\otimes p_2^*L_2\ar[d]&p_*{\mathcal{L}}=\CL_+\oplus\CL_-\ar[d]\\
\hat{S}_4=S_1\times_{\Sigma}S_2\ar[d]^{\pi}\ar[r]^{p}&\hat{S}_4/{\hat{\sigma}_4}\ar[dl]^{\hat \pi}\\
\Sigma&
}
\end{eqnarray}
  In particular, this implies that
\begin{equation}
\pi_*{\mathcal{L}}=(\hat \pi)_*p_*\mathcal{L}=(\hat\pi )_*(\mathcal{L}_+\oplus\mathcal{L}_-).
\end{equation}
  Moreover,  $\hat{S}_4$ is the normalization of the spectral curve $S_4\subset \pi^*K$ and the involution $\hat{\sigma}_4$ on $\hat{S}_4$ corresponds to the involution $\eta\rightarrow-\eta$  on $S_4$. It follows that
multiplication by the tautological section $\eta$ interchanges $\mathcal{L}_+$ and $\mathcal{L}_-$, and thus that the Higgs field on $\pi_*(\mathcal{L})$ has the form as in \eqref{I*bundlesR}.  We thus get:

\begin{proposition}\label{I2realhiggs}Consider $((S_1,L_1),(S_2,L_2))$ the spectral data for a point in $\mathcal{M}_{\SL(2,\R)\times\SL(2,\R)}$ represented by 
$$\Bigg(\left(N_1\oplus N_1^{-1},\begin{bmatrix}0&\beta_1\\\gamma_1&0\end{bmatrix}\right)\ ,\ \left(N_2\oplus N_2^{-1},\begin{bmatrix}0&\beta_2\\\gamma_2&0\end{bmatrix}\right)\Bigg),$$

\noi  and let $(\hat S_4,\mathcal{L})$ be defined as in Proposition \ref{propso21}. Then $(\hat S_4,\mathcal{L})$  is the spectral data for the point in $\mathcal{M}_{\SO_0(2,2)}$ represented by 

$$\Bigg((N_1N_2\oplus (N_1N_2)^{-1})\oplus N_1N^{-1}_2\oplus (N^{-1}_1N_2)), \begin{bmatrix}0&\alpha\\-\alpha^{\rm{T}}&0\end{bmatrix}\Bigg)$$
\noi where $\alpha$ is as in Eq. \eqref{I*phi}.
\end{proposition}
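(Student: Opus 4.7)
The plan is to combine Proposition \ref{I2spec} (which already identifies $(\hat S_4,\mathcal{L})$ as the spectral data of the $SO(4,\C)$-Higgs bundle $\CI_2((E_1,\Phi_1),(E_2,\Phi_2))$) with the explicit formula \eqref{I*bundlesR} for the map $\CI_2$ on the vector bundle side, and then verify that the torsion condition on the spectral line bundle correctly restricts us to $\CM_{SO_0(2,2)}$. So the work splits into three steps: confirm the restriction to $SO_0(2,2)$, identify the rank-two summands $W_1,W_2$ via the $\hat\sigma_4$-decomposition of $p_*\mathcal{L}$, and match the Higgs field to the stated $\alpha$.

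First I would verify that $(\hat S_4,\mathcal{L})$ sits in the $SO_0(2,2)$-locus. Since each $L_i$ arises as the spectral line bundle of an $SL(2,\R)$-Higgs bundle, Section \ref{spectral_slr} gives $L_i^2\cong \CO_{S_i}$, and hence $\mathcal{L}^2\cong p_1^*(L_1^2)\otimes p_2^*(L_2^2)\cong \CO_{\hat S_4}$. Combined with $\mathcal{L}\in \mathrm{Prym}(\hat S_4,\hat S_4/\hat\sigma_4)$ from Proposition \ref{propso21}, this places $\mathcal{L}\in P_{\hat\sigma_4}[2]$, so by the criterion of \cite[Theorem 4.12]{thesis} the corresponding $SO(4,\C)$-Higgs bundle reduces to an $SO_0(2,2)$-Higgs bundle.

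Next I would obtain the decomposition $E=W_1\oplus W_2$ from the isotypic decomposition under $\hat\sigma_4$. Because $\hat\sigma_4^*\mathcal{L}\simeq \mathcal{L}^{-1}\simeq \mathcal{L}$, the direct image along $p:\hat S_4\to \hat S_4/\hat\sigma_4$ splits as $p_*\mathcal{L}=\mathcal{L}_+\oplus\mathcal{L}_-$. Pushing down along $\hat\pi:\hat S_4/\hat\sigma_4\to\Sigma$ and twisting by $(K_{\hat S_4}\otimes\pi^*K^*)^{1/2}$ as in \eqref{EfromL-SO} yields two rank-two orthogonal bundles $W_1,W_2$. To identify these with $N_1N_2\oplus(N_1N_2)^{-1}$ and $N_1N_2^{-1}\oplus N_1^{-1}N_2$, I would use the fact that, on the pre-image of an unramified point, local sections of $\mathcal{L}=p_1^*L_1\otimes p_2^*L_2$ transform under $(\sigma_1,\sigma_2)$ as the tensor product of the actions on $L_1$ and $L_2$; the $+$ and $-$ eigencomponents are then determined by combining the $\sigma_i$-equivariant trivializations of $L_i^2\cong\CO$ and matching with the identification $E_i=N_i\oplus N_i^{-1}$ obtained from $(\pi_i)_*L_i$ via \eqref{EfromL}. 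This gives the claimed identification on the level of vector bundles, consistent with \eqref{I*bundlesR} applied to $(N_i\oplus N_i^{-1},\Phi_i)$.

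Finally, for the Higgs field, I would observe that multiplication by the tautological section $\eta$ of $\pi^*K$ is anti-invariant under $\hat\sigma_4$ (indeed $\hat\sigma_4$ restricts to $\eta\mapsto-\eta$ on $S_4$), so it interchanges the $\pm$-eigensubsheaves $\mathcal{L}_\pm$ and therefore induces an off-diagonal endomorphism of $p_*\mathcal{L}$ with respect to the splitting, i.e.\ a Higgs field of the block form \eqref{sonnphi}. The concrete expression $\alpha=\bigl(\begin{smallmatrix}\beta_2&\beta_1\\\gamma_1&\gamma_2\end{smallmatrix}\bigr)$ then follows directly from \eqref{I*bundlesR}, since Proposition \ref{I2spec} has already identified $(\hat S_4,\mathcal{L})$ with $\CI_2((E_1,\Phi_1),(E_2,\Phi_2))$. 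The main subtlety I expect is in step two, namely the bookkeeping that turns the abstract eigen-decomposition of $p_*\mathcal{L}$ into the specific line bundles $N_1N_2^{\pm 1}$ and their duals; this is a matter of carefully tracking the $(\sigma_1,\sigma_2)$-actions on local trivializations and does not involve any further geometric input.
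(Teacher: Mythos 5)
Your proposal is correct and follows essentially the same route as the paper: the torsion-two condition $L_i^2\cong\mathcal{O}$ gives $\mathcal{L}\in P_{\hat\sigma_4}[2]$, the $\hat\sigma_4$-invariance of $\mathcal{L}$ splits $p_*\mathcal{L}=\mathcal{L}_+\oplus\mathcal{L}_-$, multiplication by $\eta$ interchanges the summands to give the off-diagonal Higgs field, and the concrete identification of the bundles and of $\alpha$ is imported from Proposition \ref{I2spec} together with \eqref{I*bundlesR}. The only difference is that your local $(\sigma_1,\sigma_2)$-equivariant bookkeeping in the second step is not needed, since Proposition \ref{I2spec} combined with \eqref{I*bundlesR} already pins down $M_1=N_1N_2$ and $M_2=N_1N_2^{-1}$, which is exactly how the paper concludes.
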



%


\section{The rank 3 isogeny and  the Hitchin fibration}\label{isosmooth}

In this section we investigate the   induced map  {
$\label{i3}\CI_3:\CM_{SL(4,C)}\rightarrow \CM_{SO(6,\C)},$} 
and its restriction to the split real forms $\SL(4,\R)$ and $\SO_0(3,3)$.

 \subsection{$SL(4,\C)$-Higgs bundles and $SL(4,\R)$-Higgs bundles}\label{rec1} 
  {From} Definition \ref{real form}, an $\SL(4,\R)$-Higgs bundle on $\Sigma$ is holomorphic $\SO(4,\C)$-principal bundle together with a symmetric Higgs field. Equivalently it can be viewed as a pair $(E, \Phi)$ where $E$ is an oriented holomorphic rank 4 orthogonal vector bundle, i.e. a vector bundle with a holomorphic symmetric non-degenerate bilinear paring $Q$, and a compatible isomorphism $\delta:\det(E)\simeq\mathcal{O}$, and the Higgs field $\Phi:E\rightarrow E\otimes K$ is traceless and symmetric with respect to $Q$.
  
  Recall from Section \ref{rec1}  that the spectral curve for an $\SL(4,\C)$-Higgs bundle $(E,\Phi)$ is a ramified 4-fold cover $\pi:S\rightarrow \Sigma$  in the total space of $K$  with equation  
\begin{eqnarray}\label{P4}
P_4(\eta):=\det(\eta I-\Phi)=\eta^4+a_2\eta^2+a_3\eta^3+a_4=0,\label{PolP}
\end{eqnarray}
for $a_i\in H^0(\Sigma, K^i)$ and $\eta$ the tautological section of $\pi^*K$.  For generic choices of $\{a_2,a_3,a_4\}$ the curve is smooth and has only the most generic ramification, i.e. in fibers over the branch locus there are two unramified points and one order two ramification point.  The spectral data is completed by a 
line bundle  {$L\in {\rm Prym}(S,\Sigma)$.}  
For an $\SL(4,\R)$-Higgs bundle, from \cite[Theorem 4.12]{thesis} the spectral  line bundle $L\in {\rm Prym}(S,\Sigma)$ satisfies the extra condition $L^2\cong \CO$.

 
\subsection{$SO(6,\C)$-Higgs bundles and $SO_0(3,3)$-Higgs bundles}
An $SO_0(3,3)$-Higgs bundle  can be described as  $\SO(6,\C)$-Higgs bundle   $(E,\Phi)$where $E$ decomposes as the sum of two rank 3 holomorphic oriented orthogonal bundles, say $E=W_1\oplus W_2$ with orthogonal structures $q_i$ and compatible isomorphisms $\delta_i:\Lambda^3 W_i\cong \mathcal{O}$.  As seen in Sections \ref{season}-\ref{real so}, for any $\SO(6,\C)$-Higgs bundle the Higgs field $\Phi$  defines a 6-fold cover $ \pi_{6}: S_6\rightarrow \Sigma$  with equation 
\begin{eqnarray}\label{P6}
P_6(\eta):=\det(\eta I-\Phi)=\eta^6+b_1\eta^4+b_2\eta^2+b_3^2=0,\label{PolQ}
\end{eqnarray}
where  $b_3$ is the Pfaffian and $b_i\in H^0(\Sigma, K^i)$. The spectral data of an $SO(6,\C)$-Higgs bundle is then a pair $(\hat{S}_6, L_6)$ where $\hat{S}_6$ is the desingularization of $S_6$ and $L_6\in {\rm Prym}(\hat S_6,\hat S_6/ \hat\sigma)$ where $\hat{\sigma}$ is the (fixed-point-free) involution inherited from $S_6$ \cite{N2}.
When $L_6^2\cong \CO$ the spectral data corresponds to an $SO(3,3)$-Higgs bundle.
   
\subsection{The induced map on Higgs bundles}\label{Higgs33}

%
%
%
Using the results in Section  \ref{Lie1} for the induced action of $\CI_3$ on vector bundles and Lie algebras, we get the map between complex Higgs bundles
\begin{equation}\label{I3onHiggs}
\CI_3(E,\Phi)= (\Lambda^2E, \Phi\otimes I+I\otimes\Phi)\ .
\end{equation}
The orthogonal structure $Q:\Lambda^2E\rightarrow \Lambda^2 E^*$ is induced by the combination of \eqref{QforLambda2} and the trivialization of $\det(E)$.   As in the case of the map defined by $\CI_2$ (see Remark \ref{detiso})  an isomorphism $\delta: \det(\Lambda^2E)\rightarrow \mathcal{O}_{\Sigma}$ such that $\delta^2$ agrees with the trivialization of $(\det(\Lambda^2E))^2$ determined by $Q$, is determined only up to a choice of sign. There are thus two (oppositely oriented) possible conventions for determining the $\SO(6,\C)$ structure on $\Lambda^2E$.  This choice plays a role in the map induced by $\CI_3$ on the base of the Hitchin fibrations of the moduli spaces (see Section \ref{basemap3}).

 The vector bundle $E$  {of an $\SL(4,\R)$-Higgs bundle $(E,\Phi)$} has an oriented orthogonal structure, i.e. an associated pair $(q,\epsilon)$  where $q$ is  a holomorphic orthogonal structure on $E$, and $\epsilon$ is a compatible isomorphism $\epsilon:\det(E)\simeq \mathcal{O}$ trivializing its determinant.  The orthogonal structure induces  an isomorphism (by abuse of notation, also denoted by $q$)
\begin{equation}\label{Lambda2Q}
q:\Lambda^2E\rightarrow\Lambda^2E^*.
\end{equation}
  Using $\epsilon$ as the trivialization of $\det(E)$ required in the construction of $Q$,  yields an isomorphism
\begin{equation} *=q^{-1}\cdot Q:\Lambda^2E\rightarrow\Lambda^2E,
\end{equation}
\noi  which satisfies\footnotemark\footnotetext{We have denoted the involution by $*$ since when $E$ is the cotangent bundle to a 4-manifold, the involution is precisely the Hodge star.}
\begin{equation}
q(\alpha ,\beta)=Q(\alpha,*\beta),
\end{equation}
\noi where   $q,Q$ are regarded as bilinear forms on $\Lambda^2E$, and  $\alpha,\beta\in \Lambda^2E$.    By using a local oriented orthonormal frame to compute $*$, it can be seen  that $*$ satisfies $*^2=I$.  Taking the $\pm 1$ eigenspaces of $*$ thus gives a decomposition
\begin{equation}\label{decomp}
\Lambda^2E=\Lambda^2_+ E\oplus\Lambda^2_-E.
\end{equation}

\noi The orthogonal structure on $\Lambda^2E$ restricts to orthogonal structures on each summand, so that the structure group reduces to $\SO(3,\C)\times\SO(3,\C)$. 
  
\begin{remark}
The structure groups of the bundles $\Lambda^2E$ and $\Lambda^2_+ E\oplus\Lambda^2_-E$ can be reduced to $\SO(4)$ and $\SO(3)\times\SO(3)$ respectively. The two copies of $\SO(3)$ are precisely the normal subgroups mentioned in Remark \ref{notsimple} by virtue of which $\SO(4)$ fails to be simple (see \cite{Go2}) .
\end{remark}

With respect to the reduction in \eqref{decomp}, the Higgs field $\Phi\otimes I+I\otimes\Phi$ has the form in \eqref{sonnphi}, where $\Phi$  and  $\alpha$ are related as in Eq.~\eqref{alphamatrix}  and $-\alpha^t$ is the orthogonal transpose.  Denoting orthogonal structures on $\Lambda^2_{\pm}E$ by $q_{\pm}$, we thus get:

\begin{proposition}\label{realI3} The isogeny $\mathcal{I}_3$ induces the following map between $\SL(4,\R)$- {Higgs bundles} and $\SO_0(3,3)$-Higgs bundles: 
\begin{equation}\label{SL4onHiggs}
(E,\Phi)\mapsto \left(\Lambda^2_+E\oplus\Lambda^2_-E, \begin{bmatrix}0&\alpha\\ -\alpha^{\rm{T}}&0\end{bmatrix}\right),
\end{equation} 
\noi where if $E$ has orthogonal structure $q$ then the bundles $\Lambda^2_{\pm}E$ have orthogonal structures  $q_{\pm}$, and $\Phi$  and  $\alpha$ are related as in Eq.~\eqref{I3algebrasym}.
\end{proposition}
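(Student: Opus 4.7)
The plan is to reduce the proposition to the pointwise Lie algebra statement already encoded in Section \ref{Lie1}, particularly equations \eqref{I3algebrasym} and \eqref{alphamatrix}. Starting from the complex version \eqref{I3onHiggs} of the map, namely $(E,\Phi)\mapsto(\Lambda^2 E,\Phi\otimes I+I\otimes\Phi)$, I would first record that the oriented orthogonal structure $(q,\epsilon)$ on $E$ makes all the extra data of an $\SO_0(3,3)$-Higgs bundle available on the image: it produces the involution $*$ of \eqref{decomp}, hence the rank three holomorphic summands $\Lambda^2_\pm E$, and it induces the orthogonal structure $Q$ on $\Lambda^2 E$ via the isomorphism \eqref{QforLambda2} together with $\epsilon$. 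One then checks that $Q$ restricts to orthogonal structures $q_\pm$ on $\Lambda^2_\pm E$ with compatible trivializations of the determinants, so that each summand acquires its own $\SO(3,\mathbb{C})$-structure. Concretely, in a local orthonormal frame $\{e_1,\dots,e_4\}$ of $E$ one has explicit local frames $\{e_1\wedge e_2\pm e_3\wedge e_4,\,e_1\wedge e_3\mp e_2\wedge e_4,\,e_1\wedge e_4\pm e_2\wedge e_3\}$ for $\Lambda^2_\pm E$ with respect to which $Q$ is a block diagonal in signature $(3,3)$ form.

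The heart of the argument is showing that the Higgs field $\Phi\otimes I+I\otimes\Phi$ interchanges $\Lambda^2_+E$ and $\Lambda^2_-E$, i.e. anticommutes with $*$. This is a pointwise statement: at each fibre $\Phi$ is trace-free and symmetric with respect to $q$, so it lies in the Cartan complement $\mathfrak{m}\subset\mathfrak{sl}(4,\R)$; the claim is that $d\mathcal{I}_3(\mathfrak{m})\subset\mathfrak{m}'$, where $\mathfrak{m}'\subset\mathfrak{so}(3,3)$ is the block anti-diagonal Cartan complement. This is exactly the content of \eqref{I3algebrasym}, and it can be verified either by the direct computation giving $\alpha$ of \eqref{alphamatrix}, or by observing that $\mathcal{I}_3$ intertwines the Cartan involutions of $\SL(4,\R)$ and $\SO_0(3,3)$ (the maximal compact $\SO(4)\subset\SL(4,\R)$ maps into $\SO(3)\times\SO(3)\subset\SO_0(3,3)$, as already noted in Remark \ref{notsimple}). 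Reading the eigenvalue expressions of Remark \ref{eigensum} also provides a quick sanity check: the eigenvalues of $\Phi\otimes I+I\otimes\Phi$ come in sign-opposite pairs, matching the splitting $\Lambda^2_+E\oplus\Lambda^2_-E$ exchanged by the Higgs field.

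The main obstacle, beyond bookkeeping, is ensuring that these fibrewise identifications globalize coherently: one must check that the decomposition \eqref{decomp} and the orthogonal structures $q_\pm$ are preserved by the $\SO(4,\C)$ transition functions of $E$, and hence that the off-diagonal morphism $\alpha\colon\Lambda^2_-E\to\Lambda^2_+E\otimes K$ is a well-defined holomorphic bundle map. This reduces to the standard fact that the Hodge-type involution $*$ on $\Lambda^2$ of a rank four oriented orthogonal bundle is invariant under orientation-preserving orthogonal changes of frame, which in turn reflects the existence of the two normal $\SO(3)$ subgroups of $\SO(4)$. Once this compatibility is in place, Proposition \ref{realI3} follows by assembling the pointwise description of $d\mathcal{I}_3(\Phi)$ in block anti-diagonal form into a global Higgs bundle over $\Sigma$.
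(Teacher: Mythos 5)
Your proposal is correct and follows essentially the same route as the paper: build the involution $*=q^{-1}\cdot Q$ from the oriented orthogonal structure, split $\Lambda^2E$ into its $\pm 1$-eigenbundles with the restricted orthogonal structures $q_{\pm}$, and deduce the block anti-diagonal form of $\Phi\otimes I+I\otimes\Phi$ from the pointwise Lie-algebra computation \eqref{I3algebrasym}--\eqref{alphamatrix}. Your final globalization step is in fact automatic in the paper's formulation, since $*$ is defined intrinsically as a composition of the globally defined bundle maps $q$ and $Q$ rather than fibrewise in chosen frames.
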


\begin{remark}The precise form of $\alpha$  {depends}  on the choices  {of} the orientations of $\Lambda^2E$ and $\Lambda^2_{\pm}E$. Different choices will change the sign of $\det{\alpha}$, i.e. of the Pfaffian of ${\tiny \begin{bmatrix}0&\alpha\\ -\alpha^{\rm{T}}&0\end{bmatrix}}${.}
\end{remark}

\subsection{The induced map on spectral  data} \label{specral33} 

 Given $\SL(4,\C)$-spectral data $(S,L)$,  with $S$ defined by \eqref{P4} and $L\in\Prym(S,\Sigma)$, we build $\SO(6,\C)$-spectral data using a construction similar to the fiber product construction in Section \ref{iso22curve}, except in this case we take the product of $(S,L)$ with itself, i.e. in diagram \eqref{diagram} we have $S_1=S_2$ and $L_1=L_2$.  The resulting curve has both singularities and additional symmetries that are absent when $S_1$ and $S_2$ are different. Our construction takes both of these features into account in an essential way.

 The curve $S\times_{\Sigma}S$ is a 16-fold cover of the Riemann surface $\Sigma$.  Over a generic point in the Hitchin base, $S$ is smooth and $S\times_{\Sigma}S$ has two smooth components, namely the diagonal
$S_{\Delta}:=\{(s,s)\in S\times_{\Sigma}S\}$ and another one  which we denote by $(S\times_{\Sigma}S)_0$.  The intersection of these components lies in fibers over the branch locus of the covering $\pi:S\rightarrow\Sigma$.

Viewing the curve $S\times_\Sigma S$ in the total space of $K\oplus K$,  the involution $\tau:(x,y)\mapsto (y,x)$ interchanges the copies of $S${,} and thus the fixed point set of $\tau$ is $S_{\Delta}$.  
The quotient map \linebreak
$\pi_\tau:(S\times_{\Sigma}S)_0\rightarrow (S\times_{\Sigma}S)_0/{\tau}$
commutes with the projection onto $\Sigma$. It is an unramified double cover on $S\times_{\Sigma}S-S_{\Delta}$ but has ramification points in the fibers over the base locus of $\pi:S\rightarrow\Sigma$. Using the biholomorphism $Sym:(K\oplus K)/{\tau}\rightarrow K\oplus K^2$  given by
 \begin{eqnarray}
 Sym:(x,y)\mapsto \left(\frac{x+y}{2}, xy\right), \label{symtrans}
 \end{eqnarray}
we can view the quotient $(S\times_{\Sigma}S)_0/{\tau}$ as a curve in the total space of $K\oplus K^2$.  We define 
\begin{equation}\label{hatS6}
\hat{S}_6:=Sym((S\times_{\Sigma}S)_0/{\tau}),
\end{equation}
\noi  and denote by $\hat\pi_\tau$ the composition of $\pi_\tau$ and $Sym$. A depiction of the relation between the above curves and projections is given  in diagram  \eqref{diagram46} below.  By abuse of notation, we denote by $p_i:(S\times_{\Sigma}S)_0\rightarrow\Sigma $  the restrictions to $(S\times_{\Sigma}S)_0$ of the projection maps to the two factors of the full fiber product:

\begin{eqnarray}\label{diagram46}
\xymatrix{
S\ar@<-1ex>[dr]_{\pi}&\ar@<-.45ex>[l]_{p_1} \ar@<.45ex>[l]^{p_2}(S\times_{\Sigma}S)_0\ar[d]_{\pi_{0}}  \ar[r]^{\hat{\pi}_{\tau}}&\hat{S}_6\ar@<1ex>[ld]^{\hat{\pi}_6}\\
 &\Sigma&&&&
}
\end{eqnarray}

 \begin{lemma}  \label{uno2}For generic points in the Hitchin base, the ramification divisors $R$ on $S$, $R_{0}$ on $(S\times_{\Sigma}S)_0$  and $\hat{R}_6$ on $\hat{S}_6$ for the projections $\pi, \hat{\pi}_{\tau}$ and $\hat{\pi}_6$ respectively,  are related as follows:
\begin{equation}\label{pull}
p_1^{-1}(R)+p_2^{-1}(R)=(\hat{\pi}_\tau)^{-1}(\hat{R}_6)+2R_{0}.
\end{equation}
\end{lemma}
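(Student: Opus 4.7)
The plan is to verify the divisor identity pointwise, using the local structure of the maps above the branch locus of $\pi: S \to \Sigma$. Since all five divisors appearing in \eqref{pull} are supported above this branch locus, it suffices to work in a neighbourhood of each branch point $\sigma_0 \in \Sigma$. Under the generic hypothesis on the point in the Hitchin base, the fibre $\pi^{-1}(\sigma_0) \subset S$ consists of two étale points $s_1, s_2$ and one simple ramification point $s_3$, so $R = \sum s_3$ summed over branch points of $\pi$. The natural local coordinates are $u$ on $\Sigma$, $v_i$ on $S$ near $s_i$ with $u = v_i$ for $i=1,2$, and $t$ on $S$ near $s_3$ with $u = t^2$.

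The first main step is to enumerate the points of $(S \times_\Sigma S)_0$ above $\sigma_0$ and write down local models for the maps $p_1, p_2, \pi_0, \hat\pi_\tau$ at each. There are seven such points: the six ordered pairs $(s_i,s_j)$ with $i\ne j$, plus the node $(s_3,s_3)$ where the diagonal $S_\Delta$ meets the non-diagonal component. At a point $(s_i,s_3)$ with $i\in\{1,2\}$, the defining equation in $K\oplus K$ is $t^2 = v_i$, so the curve is smooth with parameter $t$, giving $p_1: t\mapsto t^2$ (ramification index $2$), $p_2: t\mapsto t$ (étale), and $\pi_0: t\mapsto t^2$ (ramification index $2$); symmetrically at $(s_3,s_j)$. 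At $(s_3,s_3)$ the equation $t_1^2 = t_2^2$ factors as $(t_1-t_2)(t_1+t_2) = 0$ with the non-diagonal branch $\{t_1 = -t_2\}$ belonging to $(S \times_\Sigma S)_0$; parameterising this branch by $t_1$, both $p_1, p_2$ are étale while $\pi_0: t_1 \mapsto t_1^2$ has ramification index $2$, and the involution $\tau$ acts by $t_1 \mapsto -t_1$, so $\hat\pi_\tau$ is ramified of index $2$ here. At $(s_1,s_2), (s_2,s_1)$ everything is étale. In particular, the fixed-point locus of $\tau$ in $(S \times_\Sigma S)_0$ consists precisely of the points $(s_3,s_3)$, so $R_0 = \sum (s_3,s_3)$.

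The second step is to reduce \eqref{pull} to a simpler equation using the composition formulas for ramification divisors. From $\pi_0 = \pi \circ p_i$ for $i=1,2$ and $\pi_0 = \hat\pi_6 \circ \hat\pi_\tau$, one gets
\begin{equation*}
p_i^{-1}(R) \;=\; R_{\pi_0} - R_{p_i}, \qquad \hat\pi_\tau^{-1}(\hat R_6) \;=\; R_{\pi_0} - R_0,
\end{equation*}
so \eqref{pull} is equivalent to the pointwise identity
\begin{equation*}
R_{\pi_0} \;=\; R_{p_1} + R_{p_2} + R_0.
\end{equation*}
The local data above verify this directly: at each of $(s_i, s_3)$ and $(s_3, s_i)$ with $i=1,2$, the left-hand side contributes $1$ and exactly one of $R_{p_1}, R_{p_2}$ contributes $1$ while $R_0$ vanishes; at $(s_3,s_3)$, $R_{\pi_0}$ contributes $1$ matched by $R_0$, with $R_{p_1}, R_{p_2}$ vanishing; and at $(s_1,s_2), (s_2,s_1)$ both sides vanish.

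The main technical obstacle is the careful treatment of the node $(s_3,s_3)$: one must distinguish between the two branches of $S \times_\Sigma S$ passing through this point and check that only the non-diagonal branch $\{t_1=-t_2\}$ lies in $(S \times_\Sigma S)_0$, and on this branch see that $p_1, p_2$ remain étale (despite $\pi$ being ramified at $s_3$) while $\hat\pi_\tau$ is a genuine double cover because $(s_3, s_3)$ is a fixed point of $\tau$. Once this local model is in hand, the rest of the proof is bookkeeping.
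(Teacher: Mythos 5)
Your proof is correct and follows essentially the same route as the paper: a pointwise verification over the branch locus of $\pi$ using the generic local models (two \'etale points plus one simple ramification point, with the node $(s_3,s_3)$ lying on the antidiagonal branch $t_1=-t_2$), which is exactly the fiber-by-fiber computation the paper carries out. Your intermediate reduction via the chain rule $R_{g\circ f}=R_f+f^{*}R_g$ to the identity $R_{\pi_0}=R_{p_1}+R_{p_2}+R_0$ is a harmless repackaging of the same bookkeeping rather than a different argument.
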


\begin{proof} The ramification divisor for a covering $\pi:X\rightarrow Y$ is defined by $R=\Sigma_{y\in Y} R(y)\cdot y$ where the weights $R(y)$ are such that $R(y)+1$ is the multiplicity of $\pi$ at $y$.  If $S$ is a generic spectral curve and $x$ is a point in the branch locus of $\pi:S\rightarrow\Sigma$ then we can write $\pi^{-1}(x)=\{y_1,y_2,y_3\}$, with $y_1\in R$ (with weight 1) but $\pi$ unramified at $y_2,y_3$.  Then the fiber of $(S\times_{\Sigma}S)_0/{\tau}$ over $x$ consists of points $ \{[y_1,y_1], [y_1,y_2],[y_1,y_3],[y_2,y_3]\}$.  Of these, $Sym[y_1,y_2]$ and $Sym[y_1,y_3]$ land in $\hat{R}_6$, each with weight 1, while $\hat{\pi}_{\tau}^{-1}Sym([y_1,y_1])=(y_1,y_1)$ is in $R_0$ (with weight 1).  Notice now that in the fiber over $x$ one has
\begin{align*}
\pi_0^{-1}(x)\cap( p_1^{-1}(R)+p_2^{-1}(R))&=\ 2(y_1,y_1)+(y_1,y_2)+(y_2,y_1)+(y_1,y_3)+(y_3,y_1)\\
&=\ 2(y_1,y_1)+\hat{\pi}_{\tau}^{-1}(Sym([y_1,y_2])+Sym([y_1,y_3]))
\end{align*} as required.
\end{proof}

\begin{proposition}\label{uno1}
The curve $\hat S_6$ is generically smooth, and the canonical desingularization of its projection to $K$ through the $Sym$ map, which is the spectral curve of an $SO(6,\C)$-Higgs bundle. 
 \end{proposition}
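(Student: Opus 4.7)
The plan is to establish three nested claims: (i) smoothness of the non-diagonal component $(S\times_{\Sigma}S)_0$ for a generic point of the Hitchin base, (ii) smoothness of the quotient $\hat{S}_6$, and (iii) identification of the image under $Sym$ followed by projection to $K$ with an $\SO(6,\C)$-spectral curve for which $\hat{S}_6$ is the canonical desingularization.

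For (i), I would perform a local analysis of the fiber product $S\times_{\Sigma}S\subset K\oplus K$. Away from the diagonal, $(S\times_{\Sigma}S)_0$ is obviously smooth because at least one of the two projections $p_i$ is \'etale locally (if $(y,y')$ with $y\ne y'$ lies over $x\in\Sigma$, then at each of $y,y'$ the map $\pi$ is a local isomorphism or a degree-2 branched cover, and a fiber product of two such local maps over $\Sigma$ is smooth by the implicit function theorem). The only delicate locus is the intersection with the diagonal, which occurs at points $(y_1,y_1)$ where $y_1$ is a ramification point of $\pi$. For generic Hitchin data, the ramification is simple: locally $\pi$ is $z\mapsto z^2=w$, so the fiber product has local equation $z_1^2=z_2^2$, which decomposes as $\{z_1=z_2\}\cup\{z_1=-z_2\}$, two smooth branches meeting transversally at $(0,0)$. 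The first branch is the local model for $S_\Delta$ and the second is the local model for $(S\times_{\Sigma}S)_0$; in particular $(S\times_{\Sigma}S)_0$ is smooth at $(y_1,y_1)$.

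For (ii), since $(S\times_{\Sigma}S)_0$ is a smooth curve and $\tau$ acts on it as a finite-order holomorphic involution (the involution preserves the irreducible non-diagonal component by symmetry of the defining equations), the quotient is automatically a smooth curve, with the fixed points of $\tau$ becoming ordinary branch points of $\pi_\tau$. The fixed points are precisely the points $(y_1,y_1)$ coming from ramification points of $\pi$, since off the diagonal $\tau$ acts freely. Composing with the biholomorphism $Sym:(K\oplus K)/\tau\to K\oplus K^2$ gives the smooth embedded curve $\hat{S}_6\subset K\oplus K^2$.

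For (iii), I would show that the first-coordinate projection $\hat{S}_6\to K$ hits exactly a curve of the shape \eqref{P6}. A point of $(S\times_{\Sigma}S)_0$ over $x\in\Sigma$ is an ordered pair $(\eta_1,\eta_2)$ of eigenvalues of $\Phi(x)$; under $Sym$ and then the first projection it maps to $\tfrac{1}{2}(\eta_1+\eta_2)$. By Remark \ref{eigensum}, the six unordered half-sums are (half of) the eigenvalues of $d\mathcal{I}_3(\Phi)$ on $\Lambda^2 E$, and tracelessness of $\Phi$ groups them into three $\pm\mu_k$ pairs with $\mu_1\mu_2\mu_3$ proportional to the Pfaffian of $d\mathcal{I}_3(\Phi)$. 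Hence the image curve $S_6\subset K$ is cut out by a polynomial of the form $\eta^6+b_1\eta^4+b_2\eta^2+b_3^2=0$ with $b_i\in H^0(\Sigma,K^{2i})$, exactly as in \eqref{P6}. The map $\hat{S}_6\to S_6$ is then birational, proper, and an isomorphism on the locus where the three values $\mu_k^2$ are distinct and nonzero. Over a singular point of $S_6$, necessarily at $\eta=0=b_3$ by Section \ref{season}, some $\mu_k=0$, so two distinct unordered pairs $\{\eta_i,\eta_j\}$ and $\{\eta_k,\eta_\ell\}$ with $\eta_i+\eta_j=\eta_k+\eta_\ell=0$ are mapped to the same point of $S_6$; however they are separated in $\hat{S}_6\subset K\oplus K^2$ by the second coordinate $\eta_1\eta_2$. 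Thus $\hat{S}_6$ is a smooth proper birational cover of $S_6$ that is bijective on the smooth locus, and so equals the normalization $\hat{S}$ of Section \ref{season}.

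The main obstacle is to ensure that the local model at the $\tau$-fixed points $(y_1,y_1)$ is handled correctly in both directions: on the one hand confirming that $(S\times_{\Sigma}S)_0$ acquires no singularity where it meets $S_\Delta$ (the transversal-meeting argument above), and on the other hand checking that the ramification points of $\pi_\tau$ map under the Sym-then-project map to smooth points of $S_6$ (they contribute points of $\hat{S}_6$ with first coordinate $y_1\ne 0$ generically, so are unrelated to the $\eta=0=b_3$ singular locus), so that no spurious singularities are created or missed in the identification of $\hat{S}_6$ as the desingularization.
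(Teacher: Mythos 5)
Your proposal is correct in substance, but it proves the proposition by a genuinely different route than the paper. The paper's proof is a direct computation: it writes $\hat{S}_6$ locally in $K\oplus K^2$ as the zero locus of an explicit map $\mathcal{F}:\C^3\to\C^2$, checks that the derivative of $\mathcal{F}$ has full rank at the potentially dangerous points $u=0$ (under the concrete genericity condition that $a_3$ and $a_2^2-a_4$ have no common zeros), and computes explicitly that the projection $q_1:(u,v)\mapsto 2u$ maps $\hat{S}_6$ biholomorphically, away from $u=0$, onto the sextic curve \eqref{P6} with coefficients as in \eqref{bvsa}. You instead argue structurally: smoothness of $(S\times_{\Sigma}S)_0$ from local models of the fiber product (including the transversal crossing $z_1^2=z_2^2$ at the diagonal), smoothness of $\hat{S}_6$ because a quotient of a smooth curve by a finite involution is smooth, identification of the image curve via the eigenvalue pairing $\pm\mu_k$ forced by $\tr\Phi=0$, and the desingularization claim via the standard fact that a finite birational map from a smooth curve is the normalization. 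What the paper's computation buys is the explicit coefficient dictionary \eqref{bvsa} (used later in Proposition \ref{rank3main} and in \eqref{basemap3}) and a concrete genericity condition; what your argument buys is a conceptual explanation of why smoothness holds and a cleaner normalization statement that does not depend on locating the singular locus of $S_6$ precisely. Two small bookkeeping points to align with the paper: the projection used there is $(u,v)\mapsto 2u$, compensating for the $\tfrac{x+y}{2}$ in $Sym$, so that the image is literally the characteristic curve of $\Phi\otimes I+I\otimes\Phi$ rather than a rescaling of it (your parenthetical "half of the eigenvalues" should be resolved this way if you want \eqref{bvsa} on the nose); and the constant term of $\prod_k(\eta^2-\mu_k^2)$ is $-(\mu_1\mu_2\mu_3)^2$, which is a square of a holomorphic section of $K^3$ only after absorbing a sign -- the same orientation/Pfaffian sign ambiguity the paper acknowledges elsewhere, so this is harmless but worth stating.
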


\begin{proof} In order to prove the proposition, one needs to show the following hold: {
\begin{enumerate}\item the curve $\hat{S}_6$ is generically smooth;
\item under the projection $q_1:K\oplus K^2\rightarrow K$ which on each fiber is given by $(u,v)\mapsto 2u$, the image $S_6 := q_1(\hat{S}_6)$ is a spectral curve defined by an equation of the form in \eqref{P6} with 
\begin{equation}\label{bvsa}
b_1=2a_2,   b_2= a_2^2-4a_4, b^2_3=a^2_3;
\end{equation}
\item the projection $q_1:\hat{S}_6\rightarrow S_6$ is an isomorphism away from the singularities of $S_6$ at its intersection  with the zero section of $K$.
\end{enumerate}}
 {To prove the above items,} let  $S\subset K$ be defined by  {the zero locus of \eqref{P4}}.  As a curve in the total space of $K\oplus K$, the fiber product $S\times_{\Sigma}S$ is defined by the conditions   {$P_4(\eta_1)=P_4(\eta_2)=0$}, where $\eta_1,\eta_2$ denote  the tautological sections of the two summands in $K\oplus K$. 
 After using the transformation $Sym$ defined in  \eqref{symtrans} to realize the curve as a subvariety of $K\oplus K^2$, the component  $\hat{S}_6$ is described locally (i.e. with respect to a trivialization of the bundles) as the zero locus of the map
 $\mathcal{F}:\C^3 \rightarrow \C^2$ given by
\[(z,u,v)\mapsto (8u^3 -4uv + 2a_2(z)u + a_3(z), 
 8u^4  + 2a_2(z)u^2 - 8u^2v - a_2(z)v  + a_3(z)u + v^2 + a_4(z)).\]
 The projection $q_1$ onto the total space of $K$ is given locally by $(z,u,v)\mapsto (z,2u)$. Direct computation shows that away from $u=0$, the map is a biholomorphism onto the curve define by the equation
\begin{equation}
P_6(z,\eta)= \eta^6  + 2a_2(z)\eta^4 +(a_2(z)^2- a_4(z))\eta^2+a_3^2=0,
\end{equation}
where $\eta$ is the local fiber coordinate on $K$.
This curve, which we denote by $S_6$, necessarily has singularities  {at the zeros of $a_3$,}  but is otherwise smooth for generic choice of $a_2,a_3,a_4$.  For such choices,  singularities in $\hat{S}_6$ can occur only at points where $u=0$. Direct computation shows that the derivative of $\mathcal{F}$ has full rank at such points provided $a_3$ and $a_2^2- a_4$ have no common zeros,  {proving the proposition.}
\end{proof}

 {
\begin{lemma}
The involution $\eta\rightarrow-\eta$ on $S_6$ lifts to the involution $\sigma:\hat{S}_6\rightarrow\hat{S}_6$ given on fibers away from the branch locus by the map $Sym[y_i,y_j]\mapsto Sym[y_k,y_l]$, where $\{i,j,k,l\}=\{1,2,3,4\}$. On a fiber with a ramification point, say $y_1$, the involution maps $Sym[y_1,y_1]$ to $Sym[y_2,y_3]$. 
\end{lemma}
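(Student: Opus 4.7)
The plan is to build the lift of $\eta\mapsto-\eta$ explicitly in the local coordinates $(z,u,v)$ on $K\oplus K^2$ and then read off the combinatorial action from the tracelessness of $\Phi$. Recall that the projection $q_1:K\oplus K^2\to K$ sends $(u,v)$ to $2u$, and that the $Sym$ map identifies an unordered pair of eigenvalues $\{\eta_i,\eta_j\}$ of $\Phi$ with $(u,v)=((\eta_i+\eta_j)/2,\eta_i\eta_j)$. Hence any lift of $\eta\mapsto-\eta$ to $\hat S_6$ must act as $u\mapsto -u$ in the first component; the content of the lemma is to pin down the effect on $v$ and to translate the resulting map into the combinatorial pattern $[y_i,y_j]\leftrightarrow[y_k,y_l]$.

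First I would use the first of the two defining equations of $\hat S_6$ from Proposition \ref{uno1},
\[8u^3 - 4uv + 2a_2(z)u + a_3(z) = 0,\]
to solve for $v$ when $u\ne 0$, obtaining $v=2u^2+a_2/2+a_3/(4u)$. Replacing $u$ by $-u$ yields the corresponding value $v' = 2u^2+a_2/2-a_3/(4u)$, so $v+v'=4u^2+a_2$. This motivates the candidate
\[\sigma\colon(z,u,v)\mapsto\bigl(z,\,-u,\,4u^2+a_2(z)-v\bigr),\]
which is manifestly an involution lifting $\eta\mapsto-\eta$ on $S_6$. The first defining equation is preserved by construction; for the second defining equation, a direct substitution using $v-v'=a_3/(2u)$ produces cancellations that reduce the difference of the two sides to zero. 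Continuity then extends $\sigma$ across the locus $u=0$.

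Next I would verify the combinatorial description on generic fibers. Since the characteristic polynomial $P_4$ has no $\eta^3$ term (the Higgs field is traceless), its four roots $e_1,e_2,e_3,e_4$ over a generic point of $\Sigma$ satisfy $e_1+e_2+e_3+e_4=0$. For $Sym[y_i,y_j]=((e_i+e_j)/2,\,e_ie_j)$, the relation $e_i+e_j=-(e_k+e_l)$ immediately shows that $\sigma$ exchanges the $u$-coordinates of $Sym[y_i,y_j]$ and $Sym[y_k,y_l]$. For the $v$-coordinate, substituting $u=(e_i+e_j)/2$ and $v=e_ie_j$ into the formula for $\sigma$ and using the Vieta identity $a_2=\sum_{m<n}e_me_n$ together with $(e_i+e_j)^2=(e_k+e_l)^2$ reduces $4u^2+a_2-e_ie_j$ to $e_ke_l$, giving $\sigma(Sym[y_i,y_j])=Sym[y_k,y_l]$.

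Finally, for a fiber over a branch point with $y_1$ of ramification index one, I would write $e_1=e_2$ and set $e_3=\eta(y_2)$, $e_4=\eta(y_3)$, so that tracelessness gives $e_3+e_4=-2e_1$. Then $Sym[y_1,y_1]$ has coordinates $(e_1,e_1^2)$, and $\sigma$ sends it to $(-e_1,\,3e_1^2+a_2(z))$. The Vieta relation for $P_4$ with double root $e_1$ reads $a_2=-3e_1^2+e_3e_4$, whence $3e_1^2+a_2=e_3e_4$ and the image becomes $((e_3+e_4)/2,\,e_3e_4)=Sym[y_2,y_3]$, completing the check. The main obstacle is simply organizing the Vieta manipulations with the correct multiplicities at the ramified fiber; no genuinely new ideas beyond tracelessness and the explicit form of $\sigma$ are required.
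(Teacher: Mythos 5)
Your proof is correct, and its combinatorial core---tracelessness forces $e_1+e_2+e_3+e_4=0$, so $-(e_i+e_j)=e_k+e_l$ and complementary pairs are swapped---is exactly the argument the paper gives, which works purely fiberwise with the eigenvalues and dismisses the branch fibers with ``the computation is similar.'' Where you go further is in exhibiting the lift explicitly as the algebraic involution $(z,u,v)\mapsto\bigl(z,-u,4u^2+a_2(z)-v\bigr)$ of the total space of $K\oplus K^2$ and checking it preserves the defining equations of $\hat{S}_6$ from Proposition \ref{uno1}. One small precision there: writing $F_1=8u^3-4uv+2a_2u+a_3$ and $F_2$ for the two components of $\mathcal{F}$, the first is preserved identically, while $F_2(z,-u,4u^2+a_2-v)=F_2(z,u,v)-2u\,F_1(z,u,v)$, so the second equation is preserved only on the zero locus (your phrase ``reduces the difference to zero'' should be read modulo the first equation, which is indeed what your use of $v-v'=a_3/(2u)$ presupposes). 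Note also that no continuity argument is needed at $u=0$: the formula is polynomial and well defined under change of trivialization ($u\mapsto gu$, $v\mapsto g^2v$, $a_2\mapsto g^2a_2$), so the lift exists across the ramification fibers for free---this is precisely the point the paper leaves implicit. Your Vieta computations on generic fibers ($4u^2+a_2-e_ie_j=e_ke_l$) and at a ramified fiber ($a_2=-3e_1^2+e_3e_4$ for the double root) are correct and complete the identification $Sym[y_i,y_j]\mapsto Sym[y_k,y_l]$ and $Sym[y_1,y_1]\mapsto Sym[y_2,y_3]$.
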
}
\begin{proof}
On a  {regular} fiber away from the base locus,  where we can write $\pi^{-1}=\{y_1,y_2,y_3,y_4\}$, the coordinate $\eta$ on $S_6$ has values $y_i+y_j$ for $i\ne j$. But $y_1+y_2+y_3+y_4=0$ and hence $-(y_i+y_j)=(y_k+y_l)$ where $\{i,j,k,l\}=\{1,2,3,4\}$, i.e. $\eta\mapsto -\eta$ corresponds to the action of $\sigma$. Assuming that only the most generic type of ramification occurs, the computation is similar on fibers over the branch locus.
\end{proof}

To obtain the  {correspondence induced by $\CI_3$ between} spectral line bundles we begin as in Section \ref{iso22curve}, but with $L_1=L_2$, and consider 
\begin{equation}\label{mathcalL}
\mathcal{L}=p_1^*(L)\otimes p_2^*(L)\ .
\end{equation}
\noi  Since $\mathcal{L}$  is invariant under $\tau${$:(x,y)\mapsto (y,x)$},  its direct image on $\hat S_6$
decomposes as a sum of rank one locally free sheaves generated by the invariant and anti-invariant local sections, i.e.
  \begin{equation} 
 (\hat{\pi}_{\tau})_*\mathcal{L}= \CL_+\oplus \CL_- .
  \label{lpm}
  \end{equation}
 \noi Similarly
 \begin{eqnarray}(\hat{\pi}_{\tau})_*\mathcal{O}_{(S\times_{\Sigma}S)_0}=\mathcal{O}_{\hat{S}_6}\oplus T.\end{eqnarray}
\noi  Moreover, if $[R_0]$ is  the line bundle on $(S\times_{\Sigma}S)_0$ defined by the divisor $R_0$, then  $(\hat{\pi}_{\tau})^*T=[R_0]^{-1}$ (see for example \cite[Lemma 3.1]{manetti} or \cite[p. 49]{catanese}). We define
 \begin{equation}\label{I3L}
 \mathcal{I}_3(L):=\mathcal{L}_-\otimes T^{-1}.
 \end{equation}
 
 \noi  \begin{remark}  While we have defined $\mathcal{L}$ only on $(S\times_{\Sigma}S)_0\subset S\times_{\Sigma}S$, this distinction disappears in $\CL_-$. This is a consequence of the fact that anti-invariant local sections must vanish on the fixed points of $\tau$, so that the sheaf on $(S\times_{\Sigma}S)/{\tau}$ generated by the anti-invariant sections has support only on $(S\times_{\Sigma}S)_0/{\tau}$.
\end{remark}

 
  \begin{remark}\label{LandM}  Since $\mathcal{L}$ is clearly invariant under pullback by the involution $\tau$ and admits a lift of $\tau$ which acts as identity on fibers over all fixed points, it follows (see for example \cite{nevins}) that $\mathcal{L}=\hat{\pi}_{\tau}^{-1}\mathcal{M}$ for some line bundle $\mathcal{M}$ on $\hat{S}_6$. We thus get 
  \begin{equation*}
  (\hat{\pi}_{\tau})_*{\mathcal{L}}=(\hat{\pi}_{\tau})_*((\hat{\pi}_{\tau})^*\mathcal{M})=\mathcal{M}\oplus(\mathcal{M}\otimes T),
  \end{equation*}
  
  \noi with the summands $\mathcal{M}$ and $\mathcal{M}\otimes T$ generated by the $\tau$-invariant  and anti-invariant local sections of $(\hat{\pi}_{\tau})_*(\hat{\pi}_{\tau})^*\mathcal{M}$ respectively.  With this choice of $\mathcal{M}$ we get
 \begin{equation}\label{I3LM}
 \mathcal{I}_3(L)=\mathcal{M}\ .
 \end{equation}
  \end{remark}

\begin{proposition}\label{PrymMap} If  {$L \in \Prym(S,\Sigma)$} then  {$\mathcal{I}_3(L)\in \Prym(\hat{S}_6,\hat{S}_6/{\sigma})$.} \end{proposition}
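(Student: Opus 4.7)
Since the involution $\sigma$ on $\hat{S}_6$ is fixed-point-free, the condition $\mathcal{I}_3(L) \in \Prym(\hat{S}_6, \hat{S}_6/\sigma)$ is equivalent to $\sigma^* \mathcal{M} \cong \mathcal{M}^{-1}$, where $\mathcal{M} = \mathcal{I}_3(L)$ is the line bundle on $\hat{S}_6$ characterized in Remark~\ref{LandM} by $\hat{\pi}_\tau^* \mathcal{M} = \mathcal{L}$. My strategy is to pull this target identity back through $\hat{\pi}_\tau$ and reduce it to $Nm_\pi(L) \cong \mathcal{O}_\Sigma$, the defining condition for $L \in \Prym(S,\Sigma)$.

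To begin, I would fix a lift $\tilde{\sigma}: (S \times_\Sigma S)_0 \to (S \times_\Sigma S)_0$ of $\sigma$. On a generic fiber $\pi^{-1}(x) = \{y_1, y_2, y_3, y_4\}$ this lift sends $(y_i, y_j)$ to one of the two orderings of the complementary pair $\{y_k, y_l\}$; existence of a global lift is guaranteed because $\hat{\pi}_\tau$ is Galois with group $\langle\tau\rangle$, and the ambiguity between $\tilde{\sigma}$ and $\tau\circ\tilde{\sigma}$ is immaterial for the argument. By construction $\hat{\pi}_\tau \circ \tilde{\sigma} = \sigma \circ \hat{\pi}_\tau$, so $\hat{\pi}_\tau^*(\sigma^* \mathcal{M} \otimes \mathcal{M}) = \tilde{\sigma}^* \mathcal{L} \otimes \mathcal{L}$, and it suffices to show that the right-hand side is trivial.

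The heart of the proof is the divisor identity
\begin{equation*}
p_1^* D + p_2^* D + (p_1\tilde{\sigma})^* D + (p_2\tilde{\sigma})^* D \;=\; \pi_{0}^* Nm_\pi(D)
\end{equation*}
for any divisor $D$ on $S$, where $\pi_0:(S\times_\Sigma S)_0\to\Sigma$ is the structure map of diagram \eqref{diagram46}. Geometrically, at a point $(y_i, y_j) \in (S \times_\Sigma S)_0$ the four maps record $y_i, y_j, y_k, y_l$, which is exactly the full fiber $\pi^{-1}\pi_{0}(y_i, y_j)$. Applied to $L = \mathcal{O}(D)$ this yields $\mathcal{L} \otimes \tilde{\sigma}^* \mathcal{L} \cong \pi_{0}^* Nm_\pi(L) \cong \mathcal{O}$, using that $L \in \Prym(S,\Sigma)$.

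It remains to pass from $\hat{\pi}_\tau^*(\sigma^* \mathcal{M} \otimes \mathcal{M}) \cong \mathcal{O}$ back down to $\sigma^* \mathcal{M} \otimes \mathcal{M} \cong \mathcal{O}$. For this I would invoke injectivity of $\hat{\pi}_\tau^*$ on Picard groups, which holds because $\hat{\pi}_\tau$ is a double cover with nonempty ramification: $\tau$ has fixed points on $(S \times_\Sigma S)_0$ precisely at its intersection points with the diagonal $S_\Delta$ over the branch locus of $\pi$. The standard consequence via the projection formula and decomposition $\hat{\pi}_{\tau *}\mathcal{O} = \mathcal{O} \oplus T$, with $T^{-2}$ defining the (nonempty) branch divisor and hence $T$ non-torsion, forces the kernel of $\hat{\pi}_\tau^*$ to be trivial. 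The one technical point I would need to handle carefully is the divisor identity at the ramification points of $\pi$, where the generic fiber count of four sheets degenerates; but since both sides are Cartier divisors on a smooth curve, the identity extends uniquely from the generic locus.
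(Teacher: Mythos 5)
Your reduction hinges on the existence of a lift $\tilde{\sigma}$ of $\sigma$ to $(S\times_{\Sigma}S)_0$ satisfying $\hat{\pi}_{\tau}\circ\tilde{\sigma}=\sigma\circ\hat{\pi}_{\tau}$, and this is where the argument breaks: such a lift does not exist, and the fact that $\hat{\pi}_{\tau}$ is Galois with group $\langle\tau\rangle$ does not provide one (an automorphism of the base lifts only if it pulls the double cover back to an isomorphic cover). Here it demonstrably does not: the ramification points of $\hat{\pi}_{\tau}$ are the points $(y_1,y_1)$ lying over $Sym[y_1,y_1]$ in the fibers over the branch locus of $\pi$, while $\sigma$ sends $Sym[y_1,y_1]$ to $Sym[y_2,y_3]$, over which $\hat{\pi}_{\tau}$ is unramified. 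Thus $\sigma$ does not preserve the branch divisor of $\hat{\pi}_{\tau}$, and already locally a lift is impossible: near $Sym[y_2,y_3]$ the cover splits into two disks, so a lift would give a holomorphic section of the connected double cover branched at $Sym[y_1,y_1]$, i.e.\ a section of $z\mapsto z^{2}$ near the origin, which does not exist. Even away from the branch locus there is a monodromy obstruction: for generic $S$ the monodromy of $\pi$ contains the transposition interchanging the two sheets $y_k,y_l$ complementary to $(y_i,y_j)$; it fixes $(y_i,y_j)$ but swaps the two candidate values $(y_k,y_l)$ and $(y_l,y_k)$ for $\tilde{\sigma}(y_i,y_j)$, so no equivariant choice exists. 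Consequently the identity $\hat{\pi}_{\tau}^{*}(\sigma^{*}\mathcal{M}\otimes\mathcal{M})=\tilde{\sigma}^{*}\mathcal{L}\otimes\mathcal{L}$, and with it your key divisor identity, is not available in the form you use it. (Your auxiliary ingredients are sound: pullback along a ramified double cover is indeed injective on Picard groups, and the fiberwise count ``the four maps record the whole fiber of $\pi$'' is the right mechanism. But the closing appeal to extending a divisor identity ``from the generic locus'' is also not a legitimate step on a curve, where divisors are supported at points; it would have to be replaced by choosing the divisor $D$ of $L$ to avoid the finitely many bad fibers.)

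The way around this—and it is how the paper argues—is to encode the relation ``$\sigma$ takes a pair to the complementary pair'' not by a morphism but by a divisor computation. One represents $L$ by a divisor $D$ with $Nm_{\pi}(D)=0$, descends $p_1^{*}D+p_2^{*}D$ to an explicit divisor $\mathcal{C}$ in the linear system of $\mathcal{I}_3(L)$ (using $\mathcal{L}=\hat{\pi}_{\tau}^{*}\mathcal{M}$ from Remark~\ref{LandM}), and computes the norm $Nm_{\sigma}(\mathcal{C})$ for the covering $\hat{S}_6\rightarrow\hat{S}_6/\sigma$ fiber by fiber, keeping track of the weights over the branch locus as in \eqref{CxR}. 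Each fiberwise contribution is a multiple of $\sum_i D(y_i)$, so $Nm_{\sigma}(\mathcal{I}_3(L))=0$ follows from $Nm_{\pi}(D)=0$ with no lift of $\sigma$ ever required. If you want to retain the line-bundle flavour of your argument, the correct substitute for $\tilde{\sigma}^{*}$ is the transfer along the correspondence divisor $\Delta$ of \eqref{Delta}; unwinding that transfer on divisors reproduces exactly the norm computation above.
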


\begin{proof}  {Let} $L$  {be}  defined by the divisor $D$ on $S$ given by
$$D=\sum_{x\in\Sigma}\sum_{y\in\pi_0^{-1}(x)}D(y)\cdot y:=\sum_{x\in\Sigma}D_x.$$
 {For}  $\mathcal{D}$ the divisor $p_1^{*}(D)+p_2^{*}(D)$ on $(S\times_{\Sigma}S)_0${, one has that}  $\mathcal{D}$ is in the linear system of $\mathcal{L}$.   {Moreover, when} $x$  {is} not in the branch locus of $\pi_0$, so that $\pi_0^{-1}(x)$ has four distinct points $\{y_1,y_2,y_3,y_4\}$,  {one has that} %
\begin{equation}\label{Cx}
\mathcal{D}_x :=\mathcal{D}\cap\pi_0^{-1}(x)=\hat{\pi}_{\tau}^{*}\Big(\sum_{i\ne j}(D(y_i)+D(y_j))\cdot Sym([y_i,y_j])\Big):=\hat{\pi}_{\tau}^{*}\mathcal{C}_x.
\end{equation}
   {On the other hand, i}f $x$ is a branch point,  then under our genericity assumptions on $S${,} we can assume that $\pi_0^{-1}(x)$ has one point (say $y_1$) where $\pi_0$ has multiplicity two, and two points (say $y_2,y_3$) where $\pi_0$ is unramified. Then $\mathcal{D}_x= \hat{\pi}_{\tau}^{*}\mathcal{C}_x$ with
\begin{equation}\label{CxR}
\mathcal{C}_x=D(y_1)\cdot Sym[y_1,y_1]+(D(y_2)+D(y_3))\cdot Sym[y_2,y_3]+[\sum_{i=2,3}(D(y_1)+2D(y_i))\cdot Sym[y_1,y_i]
\end{equation}
\noi where the factor 2 in the last term comes from the fact that the projection $p_1$ is ramified at the points $(y_1,y_2)$ and $(y_1,y_3)${,} while $p_2$ is ramified at the points $(y_2,y_1)$ and $(y_3,y_1)$.  {Moreover, from Remark \ref{LandM} one has that $\mathcal{L}=\hat{\pi}_{\tau}^{*}(\mathcal{M})$ and $\mathcal{I}_3(L)=\mathcal{M}$, and hence} $\mathcal{C}$  {is} defined by \eqref{Cx} {,} and \eqref{CxR} is a divisor in the linear system of $\mathcal{I}_3(L)$. Notice  that on fibers over branch points of $\pi_0$  {one has}  $\sigma[y_1,y_1]=[y_2,y_3]$.  {Then, d}enoting the branch locus of $\pi_0$ by $B$,  {one gets} %
\begin{equation}
Nm_{\sigma}(\mathcal{C}_x)=\begin{cases}(D(y_1)+D(y_2)+D(y_3)+D(y_4))\cdot\sum_{i=2}^4 [[y_1,y_i]]\ \mathrm{if}\ x\in \Sigma-B; \\
(D(y_1)+D(y_2)+D(y_3))\cdot ([[y_1,y_1]]+2[[y_1,y_2]])\ \mathrm{if}\ x\in B,
\end{cases}\label{labelB}
\end{equation}
\noi where $[[y_1,y_2]]$ denotes the point in $\hat{S}_6/{\sigma}$ and  $Nm_{\sigma}$ is the norm map for the covering $\hat{S}_6\rightarrow\hat{S}_6/{\sigma}$.  Finally, note that if $L\in Prym(S,\Sigma)$ then we can pick $D$ so that $Nm_{\pi_0}(D_x)=0$ for all $x\in \Sigma$ and hence $Nm_{\sigma}(\mathcal{I}_3(L))=Nm_{\sigma}(\mathcal{C})=0$, as required. 
 \end{proof}

We have shown that the map $(S,L)\mapsto(\hat{S}_6,\mathcal{I}_3(L))$ sends spectral data for an $\SL(4,\C)$-Higgs bundle to $\SO(6,\C)$-spectral data.  We now show that this map is compatible with 
the map given by \eqref{I3onHiggs}.

     
 \begin{proposition}\label{rank3main} The spectral data $(\hat S_6,\mathcal{I}_3(L))$ induced by an $SL(4,\C)$-Higgs bundle $(E,\Phi)$ via \eqref{hatS6} and \eqref{I3L} corresponds to the spectral data of the $SO(6,\C)$-Higgs bundle $\CI_3[(E,\Phi)]$.
 \end{proposition}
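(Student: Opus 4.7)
The plan is to establish the correspondence in two parts: matching spectral curves, and then matching the spectral line bundles together with the Higgs fields.

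\emph{Curve matching.} By Remark \ref{eigensum}, the eigenvalues of $\mathcal{I}_3(\Phi)=\Phi\otimes I+I\otimes\Phi$ acting on $\Lambda^2 E$ are the pairwise sums $\eta_i+\eta_j$, $1\le i<j\le 4$, where $\{\eta_1,\dots,\eta_4\}$ are the eigenvalues of $\Phi$. On a generic fiber of $\pi\colon S\to\Sigma$, the points of $\hat{S}_6$ are precisely $Sym[y_i,y_j]$ with $i<j$, and the projection $q_1$ of Proposition \ref{uno1} sends them to the fiber coordinates $y_i+y_j$ in $K$. Expressing $\prod_{i<j}(\eta-(y_i+y_j))$ in terms of the elementary symmetric functions of $\{y_i\}$ --- which are the coefficients $0,a_2,-a_3,a_4$ of \eqref{P4} --- yields an equation of the form \eqref{P6} with the coefficients displayed in Proposition \ref{uno1}. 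Combined with Proposition \ref{uno1} this identifies $\hat{S}_6$ as the canonical desingularisation of the spectral curve of $\mathcal{I}_3(E,\Phi)$.

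\emph{Line bundle and Higgs field matching.} Starting from $E=\pi_*(L\otimes [R]^{1/2})$ and applying the projection formula for the fiber product $\varpi\colon S\times_\Sigma S\to\Sigma$, I obtain
\[
E\otimes E=\varpi_*\bigl(\mathcal{L}\otimes (p_1^*[R]\otimes p_2^*[R])^{1/2}\bigr),
\]
where $\mathcal{L}=p_1^*L\otimes p_2^*L$. On the diagonal component $S_\Delta$ the restriction $\mathcal{L}|_{S_\Delta}=L^2$ carries the trivial $\tau$-action, and since $\Lambda^2 L=0$ for a line bundle, $S_\Delta$ contributes only to $\mathrm{Sym}^2 E$. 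Hence $\Lambda^2 E$ is captured entirely by the $\tau$-anti-invariant part of the direct image over $(S\times_\Sigma S)_0$. On this component Lemma \ref{uno2} gives
\[
(p_1^*[R]\otimes p_2^*[R])^{1/2}=(\hat{\pi}_\tau)^*[\hat{R}_6]^{1/2}\otimes [R_0],
\]
and Remark \ref{LandM} gives $\mathcal{L}=(\hat{\pi}_\tau)^*\mathcal{I}_3(L)$. Combining these and using the projection formula for $\hat{\pi}_\tau$ together with the splitting $(\hat{\pi}_\tau)_*\mathcal{O}=\mathcal{O}\oplus T$ and the relation $(\hat{\pi}_\tau)^*T=[R_0]^{-1}$, a direct computation yields
\[
(\hat{\pi}_\tau)_*\bigl((\hat{\pi}_\tau)^*(\mathcal{I}_3(L)\otimes [\hat{R}_6]^{1/2})\otimes [R_0]\bigr)=\bigl(\mathcal{I}_3(L)\otimes [\hat{R}_6]^{1/2}\otimes T^{-1}\bigr)\oplus\bigl(\mathcal{I}_3(L)\otimes [\hat{R}_6]^{1/2}\bigr),
\]
with the first summand $\tau$-invariant and the second $\tau$-anti-invariant. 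Extracting the anti-invariant piece and pushing forward via $\hat{\pi}_6$ identifies $\Lambda^2 E=(\hat{\pi}_6)_*(\mathcal{I}_3(L)\otimes [\hat{R}_6]^{1/2})$, which is precisely the $\SO(6,\C)$-spectral reconstruction \eqref{EfromL-SO} applied to the data $(\hat{S}_6,\mathcal{I}_3(L))$; the rank count $\binom{4}{2}=6$ matches the degree of $\hat{\pi}_6$. For the Higgs field, on a stalk at $(y_i,y_j)$ the operator $\Phi\otimes I+I\otimes\Phi$ acts by multiplication by $y_i+y_j$; after the $\tau$-quotient and the $Sym$ biholomorphism this is multiplication by the tautological section of $\hat{\pi}_6^*K$, i.e.\ the Higgs field attached to $(\hat{S}_6,\mathcal{I}_3(L))$.

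The principal obstacle will be the equivariant bookkeeping: one must verify that in the direct-image computation above the \emph{anti-invariant} summand comes out as $\mathcal{I}_3(L)\otimes [\hat{R}_6]^{1/2}$ rather than its twist by $T$, given that $[R_0]$ is supported on $\tau$-fixed points and therefore carries a non-trivial choice of $\tau$-equivariant lift. A local analysis at each ramification point of $\hat{\pi}_\tau$ will be needed to confirm that the identification $[R_0]=(\hat{\pi}_\tau)^*T^{-1}$ used here is the $\tau$-invariant pullback rather than its twist by the sign character. The orientation ambiguities flagged in Remark \ref{detiso} and in Section \ref{Higgs33} then enter through the required compatibility between the trivialisation of $\det(\Lambda^2 E)$ and that of $\det(\mathcal{I}_3(L))$ arising from $\mathcal{I}_3(L)\in\mathrm{Prym}(\hat{S}_6,\hat{S}_6/\sigma)$.
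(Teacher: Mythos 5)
Your proposal is correct and follows essentially the same route as the paper's proof: the curve is matched via the eigenvalue sums of Remark \ref{eigensum} and the coefficient relations \eqref{bvsa} together with Proposition \ref{uno1}, and $\Lambda^2E$ is identified with the pushforward of the $\tau$-anti-invariant summand of the twisted bundle using Lemma \ref{uno2}, $(\hat{\pi}_\tau)^*T=[R_0]^{-1}$ and Remark \ref{LandM}, landing exactly on $\mathcal{I}_3(L)\otimes[\hat{R}_6]^{1/2}$ as in the paper. The equivariant sign issue you flag at the end is precisely what Remark \ref{LandM} settles (the natural lift of $\tau$ to $\mathcal{L}$ acts as the identity on fibers over the fixed locus), so beyond that observation no further local analysis is required.
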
   
 
 \begin{proof} 
By \eqref{I3onHiggs} the Higgs field in  $\CI_3[(E,\Phi)]$ is $\Phi_6:=\Phi\otimes I + I\otimes\Phi$, viewded as a map on $\Lambda^2E\subset E\otimes E$.   Let  %
$\eta^4+a_2\eta^2+a_3\eta+a_4$ and $\eta^6+b_2\eta^4+b_4\eta^2+b_3^2
$ be the characteristic polynomials for $\Phi$ and $\Phi_6$  respectively.   Then a calculation based on Remark \ref{eigensum} shows that the coefficients are related by \eqref{bvsa}.  It follows that the curve $\hat{S}_6$ is the spectral curve for the $SO(6,\C)$-Higgs bundle $\CI_3[(E,\Phi)]$.   Moreover, Proposition \ref{uno1} then shows that $\Phi_6$ can be recovered by pushing down the section $(\eta,\eta^2)${,} where $\eta$ is tautological section of $K$.

The direct image $\pi_*(\mathcal{I}_3(L)\otimes [\hat{R}_6]^{1/2})$ acquires a special orthogonal structure in the usual way, with the trivialization of its determinant bundle determined by the Prym condition $\sigma^*(\mathcal{I}_3(L))\simeq\mathcal{I}_3(L)^*$.  In order to see that $\pi_*(\mathcal{I}_3(L)\otimes [\hat{R}_6]^{1/2})=\Lambda^2E$, consider the (singular) full fiber product $S\times_{\Sigma}S$ and for any line bundle $N$ on $S$ let $\tilde{\mathcal{N}}=p_1^*(N)\otimes p_2^*(N)$, where now the projections are from the entire $S\times_{\Sigma}S$.  Arguing as before, since $\tilde{\mathcal{N}}$ is invariant under the involution $\tau$, it follows that its direct image under the projection $\pi_{\tau}:S\times_{\Sigma}S\rightarrow (S\times_{\Sigma}S)/{\tau}$ decomposes as the sum of rank one coherent sheaves  {$\tilde{\mathcal{N}}_-\oplus\tilde{\mathcal{N}}_+$}.  Though in principle these summands need not be locally free,  their restriction to $(S\times_{\Sigma}S)_0$ is well-behaved.  Indeed, given  the  projections $\pi:S\rightarrow \Sigma$ and $\hat\pi:(S\times_{\Sigma}S) /\tau\rightarrow \Sigma$ one gets    \begin{small}
\begin{eqnarray}\label{plusminus}
\hat\pi_*(\tilde{\mathcal{N}}_+)\oplus\hat\pi_*(\tilde{\mathcal{N}}_-)=
Sym^2(~\pi_*(p_1^*(N)\otimes p^*_2(N))~)\oplus\Lambda^2(~\pi_*(p_1^*(N)\otimes p^*_2(N))~).
\end{eqnarray}\end{small}
%
\noi But the left- and right-hand sides of this identification are both $\pm1$-eigenspace decompositions for $\Z_2$-actions that are compatible with the projection map.  We can thus identify $\hat\pi_*(\tilde{\mathcal{N}}_-)\simeq \Lambda^2(\pi_*(p_1^*(N)\otimes p^*_2(N))$.  Notice, furthermore, that the support of $\tilde{\mathcal{N}}_-$ is $(S\times_{\Sigma}S)_0$, so that $\tilde{\mathcal{N}}_-=\mathcal{N}_-$ where dropping the tilde on $\tilde{\mathcal{N}}$ denotes the restriction to $(S\times_{\Sigma}S)_0$. It follows that  
\begin{equation}\label{anyN}
\hat{\pi}_*\mathcal{N}_-=\Lambda^2({\pi}_*N )\ .
\end{equation}
\noi In particular, taking $N=L\otimes [R]^{1/2}$ where $R$ is the ramification divisor on $S$, so that $\pi_*N=E$, it follows that \eqref{anyN} gives $\hat{\pi}_*\mathcal{N}_-=\Lambda^2 E$.
It thus remains to show that $\mathcal{N}_-=\mathcal{I}_3(L)\otimes [\hat{R}_6]^{1/2}$. But by Lemma \ref{uno2} and \eqref{pull} 
\begin{align*}
\mathcal{N}&=\mathcal{L}\otimes [p_1^{-1}(R)+p_2^{-1}(R)]^{1/2}\\
&= \pi_{\tau}^{*}(\mathcal{M}\otimes[\hat{R}_6]^{1/2}\otimes T^{-1}),
\end{align*}
\noi where $\mathcal{M}$ is as in Remark \ref{LandM} and $T$ is as above. It follows that 
\begin{equation}
\mathcal{N}_-= \mathcal{M}\otimes[\hat{R}_6]^{1/2}\otimes T^{-1}\otimes T = \mathcal{I}_3(L)\otimes [\hat{R}_6]^{1/2},
\end{equation} as required.
\end{proof}

 %
 %
 


We  {shall} end this section with a discussion of the relation between our construction and the so-called trigonal construction of Recillas (see \cite{ron} or \cite{recillas}).  Given a smooth curve $\Sigma$, this construction relates a smooth four-fold cover of  $\Sigma$ to a six-fold cover with a fixed-point-free involution {,} whose quotient is thus a three-fold smooth cover.  Taking $S$ as the four-fold cover, the curve  {$\mathbb{S}:=$}$Sym^{-1}(\hat{S}_6)=(S\times_{\Sigma}S)_0/{\tau}$  is precisely the corresponding six-fold cover with involution.   This is most easily seen by considering the fibers of the covering maps onto $\Sigma$.  At the regular fibers where, if the fiber of $S$ over a point $x\in\Sigma$ consists of points $\{y_1(x),y_2(x),y_3(x),y_4(x)\}$ then the fiber of $(S\times_{\Sigma}S)_0/{\tau}$ over $x$ consists of the points corresponding to the unordered pairs $\{[y_1,y_2],[y_1,y_3],[y_1,y_4],[y_2,y_3],[y_2,y_4], [y_3,y_4]\}$ (where we have dropped the dependence on $x$ to simplify the notation).  

 {Considering} $\pi:S\rightarrow \Sigma$   a smooth cover with only the most generic ramification, if $x$ is in the branch locus and $\pi^{-1}(x)=\{y_1,y_2,y_3\}$ (as in the proof of Proposition \ref{PrymMap}) {,} then the fiber of $\mathbb{S}$  consists of the unordered pairs $\{[y_1,y_1],[y_1,y_2],[y_1,y_3],[y_2,y_3]\}$.  This relation between $S$ and $\mathbb{S}$ can be regarded as a map from $S$ to the  {third}  symmetric product of $\mathbb{S}$, mapping a point in $S$ to the unordered pairs containing that point.  This determines a correspondence defined by an effective divisor on  $S\times \mathbb{S}$, but for the sake of comparison with our construction, we use the map $Sym$ to replace $\mathbb{S}$ with $\hat{S}_6$.

\begin{definition} Define the effective divisor $\Delta\subset S\times \hat{S}_6$ by

\begin{equation}\label{Delta}
\Delta=\sum_{\tiny y\in S, ~[y,y']\in\mathbb{S}}\big(y,Sym[y,y']\big).
\end{equation}
\end{definition}

  Notice that $\Delta$ is contained in $S\times_{\Sigma}\hat{S}_6$ and is given explicitly by
\begin{align*}
\Delta=&\sum_{x\in \Sigma-B}\sum_{i=1}^4\sum_{j\ne i}\Big(y_i(x), Sym[y_i(x),y_j(x)]\Big)\\
&+\sum_{x\in B}\Bigg(\sum_{j=1}^3(y_1(x),Sym[y_1(x),y_j(x)]\Big)+\sum_{i=2}^3\sum_{j\ne i}\Big(y_i(x),Sym[y_i(x),y_j(x)]\Big)\Bigg).
\end{align*}
\noi Here  {the set} $B\subset\Sigma${, as in \eqref{labelB},} is the branch locus of $\pi${,} and at $x\in B$ we label the points in the fiber so that $\pi$ has degree 2 at $y_1(x)$.  

\begin{remark}  {The map} 
\begin{align*}
C:(S\times_{\Sigma}S)_0&\rightarrow S\times \hat{S}_6\\
(y_i(x),y_j(x))&\mapsto \Big(y_i(x),Sym[y_i(x),y_j(x)]\Big),
\end{align*}
 is injective and its image is precisely the divisor $\Delta$.  
\end{remark}

\noi Using the correspondence defined by $\Delta$ we  {may define} 
\begin{align*}
C_{\Delta}:Jac(S)&\rightarrow Jac(\hat{S}_6)\\
[D]&\mapsto [Nm_2(\pi_1^{*}(D)\cap\Delta)]
\end{align*}
 {for $\pi_1$   the projection   $S\times\hat{S}_6\rightarrow S$, and the norm map $Nm_2: Jac(S\times\hat{S}_6)\rightarrow Jac(\hat S_6)$.}
    
\begin{proposition} The map $C_{\Delta}$  restricts to a map between Prym varieties, i.e.
\[C_{\Delta}:Prym(S,\Sigma)\rightarrow Prym(\hat{S}_6,\hat{S}_6/{\sigma})\ .
\] This map agrees with the map defined by virtue of Proposition \ref{PrymMap}, i.e. with   $L\mapsto\CI_3(L)$ given by \eqref{I3L}. 
\end{proposition}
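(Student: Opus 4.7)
\medskip

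\noindent \textbf{Proof plan.} The strategy is to reduce both claims to a fiberwise calculation identifying the divisor $C_\Delta(D)$ with the divisor $\mathcal{C}$ that appears in the proof of Proposition \ref{PrymMap}. Once this identification is established, the Prym compatibility follows immediately from the computation of $Nm_\sigma(\mathcal{C})$ already carried out there, and the equality $C_\Delta(L)=\mathcal{I}_3(L)$ follows from Remark \ref{LandM}.

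First I would unpack the definition of $C_\Delta$ on a single point. For $y_i(x)\in S$, the preimage $\pi_1^{-1}(y_i(x))=\{y_i(x)\}\times \hat{S}_6$ meets $\Delta$ in precisely the points $(y_i(x),\,Sym[y_i(x),y_j(x)])$ with $j\ne i$, so pushing forward by $\pi_2$ gives $C_\Delta([y_i(x)])=\sum_{j\ne i}Sym[y_i(x),y_j(x)]$. Writing a general divisor $D=\sum D(y_i(x))\,y_i(x)$ on $S$ and collecting terms by unordered pair $[y_i,y_j]$, the coefficient of $Sym[y_i(x),y_j(x)]$ in $C_\Delta(D)$ becomes $D(y_i(x))+D(y_j(x))$. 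Over regular fibers this is exactly the expression \eqref{Cx} for $\mathcal{C}_x$.

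Next I would deal with the ramified fibers. Using the explicit formula for $\Delta$ above $x\in B$, the contribution to $C_\Delta(D)$ at such $x$ involves the self-paired point $Sym[y_1(x),y_1(x)]$ coming from $(y_1(x),Sym[y_1(x),y_1(x)])\in\Delta$, together with the terms $Sym[y_1,y_2]$ and $Sym[y_1,y_3]$ coming from the ramified projections $p_1,p_2$ with the correct multiplicities; a direct computation matches \eqref{CxR} exactly. Thus as divisors on $\hat{S}_6$, $C_\Delta(D)=\mathcal{C}$. Invoking Remark \ref{LandM}, which identifies $\mathcal{I}_3(L)=\mathcal{M}$ as the line bundle with $\hat{\pi}_\tau^{*}\mathcal{M}=\mathcal{L}$ and whose divisor pulls back under $\hat{\pi}_\tau$ to $\mathcal{D}$, this identifies $[C_\Delta(D)]=\mathcal{I}_3(L)$ in $\mathrm{Pic}(\hat{S}_6)$.

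Finally, the Prym restriction is automatic: if $D$ is chosen so that $Nm_\pi(D)=0$, then formula \eqref{labelB} in the proof of Proposition \ref{PrymMap} reads $Nm_\sigma(\mathcal{C})=Nm_\pi(D)\cdot(\text{effective cycle})=0$, so $C_\Delta$ sends $\Prym(S,\Sigma)$ into $\Prym(\hat{S}_6,\hat{S}_6/\sigma)$ and agrees there with $L\mapsto \mathcal{I}_3(L)$. The main obstacle is purely bookkeeping: tracking the intersection multiplicities of $\pi_1^{*}(D)$ with $\Delta$ over the branch locus, where the diagonal contribution $[y_1,y_1]$ and the doubled weights on the ramified pairs $[y_1,y_2],[y_1,y_3]$ must be read off correctly from the explicit description of $\Delta$ so as to reproduce the ramified-fiber formula \eqref{CxR} on the nose.
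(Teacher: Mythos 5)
Your proposal is correct and follows essentially the same route as the paper: both identify the divisor $Nm_2(\pi_1^{*}(D)\cap\Delta)$ with the divisor $\mathcal{C}$ from the proof of Proposition \ref{PrymMap} via the fiberwise formulas \eqref{Cx} and \eqref{CxR} (with the extra intersection multiplicities over the branch locus coming from the ramification of $p_1,p_2$, exactly as you flag), and then deduce both the identification $C_{\Delta}(L)=\CI_3(L)$ and the Prym condition from Proposition \ref{PrymMap} and Remark \ref{LandM}. In fact you spell out more of the fiberwise bookkeeping than the paper does, which states the identity $\mathcal{C}=Nm_2(\pi_1^{*}(D)\cap\Delta)$ directly.
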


\begin{proof} Let $L\in Jac(S)$ be defined by divisor $D$ on $S$. With $\mathcal{C}$ as in the proof of Proposition \ref{PrymMap},  it follows from \eqref{Cx} and \eqref{CxR} that 
$$\mathcal{C}=Nm_2(\pi_1^{*}(D)\cap\Delta),$$
\noi and hence (as in the proof of Proposition \ref{PrymMap}) that $\mathcal{C}$ is in the linear system for $\CI_3(L)$.   It thus follows from Proposition \ref{PrymMap} that $C_{\Delta}$ defines a map between the indicated Prym varieties.\end{proof}
%
%
  \subsection{The restriction to the split real form $\SL(4,\R)$}  We now examine the consequences of imposing the additional condition $L^2=\mathcal{O}_S$ on the spectral data $(S,L)$  {described} in Section \ref{specral33}. 

\begin{proposition} Under the assumptions and notation of Proposition \ref{PrymMap}, if $L^2\simeq\mathcal{O}_S$, then $\mathcal{I}_3(L)$ is a point of order two in $Jac(\hat{S}_6)$.
\end{proposition}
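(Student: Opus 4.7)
My plan is to exploit the description of $\mathcal{I}_3(L)$ given in Remark \ref{LandM}, namely that $\mathcal{I}_3(L)=\mathcal{M}$ where $\mathcal{M}$ is the unique line bundle on $\hat{S}_6$ such that $\hat{\pi}_\tau^*\mathcal{M}=\mathcal{L}=p_1^*L\otimes p_2^*L$. The strategy is to show that squaring $\mathcal{M}$ gives a line bundle whose pullback under $\hat{\pi}_\tau$ is trivial, and then to descend the triviality back down to $\hat{S}_6$ using the decomposition of $(\hat{\pi}_\tau)_*\mathcal{O}$ into $\tau$-eigensheaves.

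First, assuming $L^{2}\simeq \mathcal{O}_S$, the line bundle $\mathcal{L}^{2}=p_1^{*}(L^{2})\otimes p_2^{*}(L^{2})$ is trivial on $(S\times_{\Sigma}S)_0$. Moreover, the isomorphism $\mathcal{L}^{2}\simeq \mathcal{O}$ can be chosen to be $\tau$-equivariant: the pullback structure on $\mathcal{L}=\hat\pi_\tau^*\mathcal{M}$ gives it a canonical $\tau$-linearization acting trivially on fibres over fixed points of $\tau$, and the same is true for $\mathcal{O}$ with its tautological linearization, so squaring $L^{2}\simeq\mathcal{O}_S$ and pulling back along $p_1$ and $p_2$ produces an equivariant isomorphism.

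Next, I push down $\hat\pi_\tau^*(\mathcal{M}^{2})\simeq \mathcal{O}_{(S\times_\Sigma S)_0}$ and apply the projection formula. Using $(\hat\pi_\tau)_*\mathcal{O}=\mathcal{O}_{\hat{S}_6}\oplus T$, we obtain
\begin{equation*}
\mathcal{M}^{2}\oplus (\mathcal{M}^{2}\otimes T)\;\simeq\;(\hat{\pi}_\tau)_*\hat\pi_\tau^*(\mathcal{M}^{2})\;\simeq\;(\hat\pi_\tau)_*\mathcal{O}\;\simeq\;\mathcal{O}_{\hat{S}_6}\oplus T.
\end{equation*}
Because the isomorphism is $\tau$-equivariant, it respects the decomposition of each side into $\tau$-invariant and anti-invariant parts. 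On the left, the summand $\mathcal{M}^{2}$ is the invariant piece (being the direct image of invariant local sections of $\hat\pi_\tau^*\mathcal{M}^{2}$) and $\mathcal{M}^{2}\otimes T$ is the anti-invariant piece; on the right, $\mathcal{O}_{\hat{S}_6}$ is the invariant part and $T$ the anti-invariant part. Matching the invariant summands yields $\mathcal{M}^{2}\simeq \mathcal{O}_{\hat{S}_6}$, i.e.\ $\mathcal{I}_3(L)^{2}\simeq\mathcal{O}_{\hat{S}_6}$, which is exactly the claim.

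The main obstacle I anticipate is justifying that the triviality on $(S\times_{\Sigma}S)_0$ actually descends to triviality on $\hat{S}_6$, since the pullback map $\hat\pi_\tau^{*}\colon \mathrm{Pic}(\hat{S}_6)\to \mathrm{Pic}((S\times_{\Sigma}S)_0)$ need not be injective for a ramified double cover. The kernel is generated by the line bundle $T$ corresponding to the double cover, and a priori one only concludes $\mathcal{M}^{2}\in\{\mathcal{O}_{\hat{S}_6},T\}$. The equivariance of the isomorphism $\mathcal{L}^{2}\simeq\mathcal{O}$, together with the isotypic decomposition above, is exactly what distinguishes the invariant summand $\mathcal{M}^{2}\simeq \mathcal{O}$ from the anti-invariant possibility $\mathcal{M}^{2}\simeq T$; this is the step where care is needed, but the canonical $\tau$-linearizations on $\mathcal{L}$ (as a pullback) and on $\mathcal{O}$ make the equivariance automatic.
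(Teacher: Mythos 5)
Your proof is correct, but it takes a different route from the paper's. The paper's own argument is a one-liner: the map $L\mapsto\mathcal{I}_3(L)$ of \eqref{I3L} is a group homomorphism between Jacobians, so $\mathcal{I}_3(L)^2=\mathcal{I}_3(L^2)=\mathcal{I}_3(\mathcal{O}_S)=\mathcal{O}_{\hat{S}_6}$. You instead unpack the descent description of Remark \ref{LandM} and prove the needed instance of multiplicativity by hand: you square the relation $\hat{\pi}_\tau^*\mathcal{M}\simeq\mathcal{L}$, push forward, and use the $\tau$-eigensheaf decomposition together with the canonical linearizations to identify $\mathcal{M}^2$ with the invariant summand $\mathcal{O}_{\hat{S}_6}$. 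What your approach buys is an explicit justification of the step the paper leaves implicit (why $\mathcal{I}_3$ is compatible with tensor products and sends $\mathcal{O}_S$ to $\mathcal{O}_{\hat{S}_6}$), and it would even survive in an unramified setting where descent is genuinely ambiguous; what the paper's approach buys is brevity, since once the homomorphism property is granted the statement is immediate. One remark on your final paragraph: the ambiguity $\mathcal{M}^2\in\{\mathcal{O}_{\hat{S}_6},T\}$ you guard against does not actually arise here, because $\hat{\pi}_\tau$ is a \emph{ramified} double cover (it has ramification over the branch locus of $\pi:S\rightarrow\Sigma$), so $\hat{\pi}_\tau^*T=[R_0]^{-1}$ has negative degree and in fact $\hat{\pi}_\tau^*$ is injective on Picard groups (if $\hat{\pi}_\tau^*M\simeq\mathcal{O}$ then $h^0(M)+h^0(M\otimes T)=h^0(\mathcal{O})+h^0(T)=1$ forces $M\simeq\mathcal{O}$); your equivariance argument is a harmless, and slightly more general, way of settling the same point.
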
  

\begin{proof}  
The map $L\mapsto\mathcal{I}_3(L)$ defined by \eqref{I3L}  is a group homomorphism between Jacobians. Thus, in particular, if $L^2\simeq\mathcal{O}_S$ then $\mathcal{I}_3(L)^2=\mathcal{I}_3(L^2)=\mathcal{I}_3(\mathcal{O}_S)=\mathcal{O}_{\hat{S}_6}$.
\end{proof}

It follows (see \cite[Theorem 4.12]{thesis}) that our construction maps spectral data for $\SL(4,\R)$-Higgs bundles to spectral data for $\SO_0(3,3)$-Higgs bundles.
 As in the rank two case discussed in Section \ref{realform22},  if $\mathcal{I}_3(L)$ is a point of order two in $Prym(\hat{S}_6,\hat{S}_6/{\sigma})$ then it is invariant under the involution $\sigma$.  Under the projection $p:\hat{S}_6\rightarrow\hat{S}_6/{\sigma}$ the direct image sheaf $p_*\mathcal{I}_3L$ thus splits as the sum of two line bundles $\mathcal{I}_3(L)_{\pm}$,   generated by $\sigma$-invariant and anti-invariant local sections. 
\noi In particular,
\begin{equation}\label{decomp+-}
\hat{{{\pi}_6}}_*{\mathcal{I}_3(L)}={\pi_{\sigma}}_*\mathcal{I}_3(L)_+\oplus {\pi_{\sigma}}_*\mathcal{I}_3(L)_-.
\end{equation} We get a diagram similar to \eqref{diagramR}:
\begin{eqnarray}\label{diagramR4}
\xymatrix{
\mathcal{I}_3(L)\ar[d]&p_*({\mathcal{I}_3(L)})=\mathcal{I}_3(L)_+\oplus\mathcal{I}_3(L)_-\ar[d]\\
\hat{S}_6\ar[d]^{\hat{\pi}_6}\ar[r]^{p}&\hat{S}_6/{\sigma}\ar[dl]^{\pi_{\sigma}}\\
\Sigma&
}
\end{eqnarray}

  Recall from Proposition  {\ref{rank3main}} that $\hat{S}_6$ is the normalization of the spectral curve $S_6\subset\pi^*K$ and that the involution $\sigma$ on $\hat{S}_6$ corresponds to the involution $\eta\mapsto -\eta$ on $S_6$. It follows that multiplication by the tautological section $\eta$ interchanges $\mathcal{I}_3(L)_-$ and $\mathcal{I}_3(L)_+$, and thus that the Higgs field on $\hat{{{\pi}_6}}_*{\mathcal{I}_3(L)}$ has the form as in \eqref{SL4onHiggs}.  Combined with Proposition \ref{realI3}, we thus get: 

\begin{proposition} {Let}  $(S,L)$  {be} the spectral data for a point in $\mathcal{M}_{\SL(4,\R)}$ represented by  {a}  Higgs bundle $(E,\Phi)$ with orthogonal structure $q$ on $E${,} and isomorphism $\delta:\det E\simeq\mathcal{O}_{\Sigma}$.  Let  $(\hat{S}_6,\mathcal{I}_3(L))$ be defined as  \eqref{hatS6} and \eqref{I3L}, i.e.
$ \hat{S}_6:=Sym((S\times_{\Sigma}S)_0/{\tau})$ and $\mathcal{I}_3(L)=\mathcal{L}_-\otimes T^{-1}.$ 
\noi Then $(\hat{S}_6,\mathcal{I}_3(L))$  is the spectral data for the point in $\mathcal{M}_{\SO_0(3,3)}$ represented by 
\begin{equation}\label{theone}
\Bigg(\Lambda^2_+E\oplus\Lambda^2_-E, \left(\begin{array}{cc}0&\alpha\\-\alpha^{\rm{T}}&0\end{array}\right)\Bigg),
\end{equation}
\noi where the bundles have oriented orthogonal structures  $(q_{\pm},\delta_{\pm})$ as in \S \ref{Higgs33} {,}
and $\alpha$ is as in \eqref{SL4onHiggs}.
\end{proposition}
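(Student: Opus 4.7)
The plan is to derive the proposition as a refinement of Proposition \ref{rank3main}, using the torsion-two condition to descend from $\SO(6,\C)$ to the split real form $\SO_0(3,3)$, and then matching the two eigenspace decompositions to identify the summands as $\Lambda^2_{\pm}E$ with the prescribed Higgs field. First, by Proposition \ref{rank3main} the pair $(\hat{S}_6,\mathcal{I}_3(L))$ is already known to be the $\SO(6,\C)$-spectral data for $\CI_3(E,\Phi)=(\Lambda^2 E,\Phi\otimes I+I\otimes\Phi)$. The hypothesis that $(S,L)$ represents an $\SL(4,\R)$-Higgs bundle gives $L^2\simeq\mathcal{O}_S$, and the preceding proposition yields $\mathcal{I}_3(L)^2\simeq\mathcal{O}_{\hat S_6}$. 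Applying \cite[Theorem 4.12]{thesis} as in Section \ref{real so}, this torsion-two refinement places $(\hat{S}_6,\mathcal{I}_3(L))$ inside the image of $\mathcal{M}_{\SO_0(3,3)}\hookrightarrow \mathcal{M}_{\SO(6,\C)}$.

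Next I would identify the two components of the associated $\SO_0(3,3)$-Higgs bundle with $\Lambda^2_{\pm}E$. Because $\mathcal{I}_3(L)$ is $\sigma$-invariant (being of order two in the Prym variety), its direct image under $p:\hat{S}_6\to\hat{S}_6/\sigma$ splits as $\mathcal{I}_3(L)_+\oplus\mathcal{I}_3(L)_-$ into invariant and anti-invariant subsheaves. Pushing down to $\Sigma$ through $\pi_\sigma$ yields the decomposition in \eqref{decomp+-}, and this must agree with the $\SO(3,\C)\times\SO(3,\C)$ reduction of $\Lambda^2E$ coming from the Hodge-star involution $*$ of Section \ref{Higgs33}. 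The matching is forced: both are $\pm1$-eigenspace decompositions of a common rank-$6$ orthogonal bundle under compatible $\Z_2$-actions, so it suffices to check that the sheet-swapping involution $\sigma$ on $\hat S_6$ corresponds, under the correspondence of Proposition \ref{rank3main}, to the involution $*$ on $\Lambda^2E$. This can be verified on a generic fibre using the description $\sigma: Sym[y_i,y_j]\mapsto Sym[y_k,y_l]$ with $\{i,j,k,l\}=\{1,2,3,4\}$, which mirrors the Hodge-star duality on the wedge basis $\{e_i\wedge e_j\}$ induced by the orthonormal frame diagonalising $\Phi$.

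Finally, the Higgs field. As noted in the paragraph preceding the proposition, multiplication by the tautological section $\eta$ on $\hat{S}_6$ intertwines the $\pm$-eigenspaces of $\sigma$ (since $\sigma$ acts as $\eta\mapsto-\eta$), so the push-forward Higgs field on $\pi_{\sigma*}\mathcal{I}_3(L)_+\oplus \pi_{\sigma*}\mathcal{I}_3(L)_-$ automatically has the block anti-diagonal form
\[
\begin{pmatrix}0&\alpha\\-\alpha^{\mathrm T}&0\end{pmatrix}
\]
with $\alpha:\pi_{\sigma*}\mathcal{I}_3(L)_-\to\pi_{\sigma*}\mathcal{I}_3(L)_+\otimes K$. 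Combining this with the identification of the summands with $\Lambda^2_\pm E$ and invoking Proposition \ref{realI3} (which computes $\alpha$ from the entries of $\Phi$ as in \eqref{alphamatrix}) completes the match with the Higgs bundle in \eqref{theone}. The orthogonal structures $(q_\pm,\delta_\pm)$ on the summands arise from the Prym condition on $\mathcal{I}_3(L)$ in the usual spectral way, and agree with those of Section \ref{Higgs33} by the same eigenspace-matching argument.

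The main obstacle is the identification of the $\sigma$-eigenspace decomposition of $\pi_{\hat{\pi}_6*}\mathcal{I}_3(L)$ with the $*$-eigenspace decomposition $\Lambda^2E=\Lambda^2_+E\oplus\Lambda^2_-E$, since the former is defined via the geometry of the fibre product and the latter via the orthogonal structure on $E$. Once this is handled by a fibrewise comparison at a generic point (where the eigenvectors of $\Phi$ provide a distinguished orthonormal frame trivialising both constructions), the remaining bookkeeping with $\alpha$, $q_\pm$, and $\delta_\pm$ is routine.
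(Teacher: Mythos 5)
Your proposal is correct and follows essentially the same route as the paper: torsion-two descent of $\mathcal{I}_3(L)$ via the group-homomorphism property and \cite[Theorem 4.12]{thesis}, the $\sigma$-splitting of the direct image as in \eqref{decomp+-}, the observation that multiplication by $\eta$ interchanges the two summands to give the block anti-diagonal Higgs field, and finally Propositions \ref{rank3main} and \ref{realI3} to identify the result with \eqref{theone}. Your explicit fibrewise matching of the $\sigma$-eigenspace splitting with the $*$-eigenspace splitting $\Lambda^2_{\pm}E$ is a reasonable elaboration of a step the paper leaves implicit (modulo the orientation/sign ambiguities the paper itself flags), but it does not change the argument.
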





 \section{Maps between moduli spaces and Hitchin fibrations}\label{maps}
 
Thus far we have examined the maps induced by the isogenies on individual Higgs bundles or their spectral data. In this section we collect together some remarks about the induced maps on the corresponding moduli spaces and on their Hitchin fibrations. We note that  {whilst} the maps on Higgs bundles (given in \S \ref{iso22higgs} and Proposition \ref{realI3})  {are} defined for all Higgs bundles {,} but  {do} not obviously preserve stability properties. On the other hand, the maps on spectral data
(see  Propositions \ref{propso21} and \ref{rank3main}) automatically preserve stability but apply only to generic points in the moduli spaces  - where the stability condition is vacuous.  Pending further work we thus limit our remarks to the dense open sets in the moduli spaces which exclude the non-generic fibers of their Hitchin fibrations. We denote these sets by $\CMt_G\subset\mathcal{M}_G$.   

 {Note that for the groups in the isogenies studied in this paper,}  the special linear groups can be identified as the spin groups for the special orthogonal groups  {. T}he induced map $\CMt_{\mathrm{Spin}(2n,\C)}\rightarrow\CMt_{\SO(2n,\C)}$ is a finite map {.}  {As seen in \cite{N2}, f}or any  {$n$}, the moduli space $\mathcal{M}_{\SO(2n,\C)}$ has two components corresponding to the two possible values for the second Stieffel-Whitney class of an $\SO(2n,\C)$-principal bundle. In contrast, the underlying holomorphic bundles for the Higgs bundles in $\mathcal{M}_{\SL(2,\C)\times\SL(2,\C)}$ and $\mathcal{M}_{\SL(4,\C)}$ have just one topological type,  {and}  the moduli spaces are connected. The images of the maps $\CI_2$ and $\CI_3$ thus see just one of the components in $\mathcal{M}_{\SO(4,\C)}$ or $\mathcal{M}_{\SO(6,\C)}$. Indeed, this can be understood from the point of view of the surface group representations corresponding to the Higgs bundles via non-Abelian Hodge theory. From this point of view, the component in the image of the map contains precisely the representations in $\SO(2n,\C)$ which lift to $Spin(2n,C)$. We   therefore  see only the Higgs bundles in which the underlying holomorphic bundle has $w_2=0$.

The situation is more nuanced for the restriction of the maps to the moduli spaces for the split real forms. In this case, the underlying holomorphic bundles have more complicated topology than in the case of the Higgs bundles for the complex groups. Moreover,  fixing the topological type of the bundle does not ensure connectedness of the components. What remains true is that the images of the maps $\CI_2$ and $\CI_3$ contain only those components of the moduli spaces in which the Higgs bundles correspond to representations which lift to the appropriate spin group. 

 {In terms of Hitchin fibrations, one should note that for each $n=2,3$, t}he bases of the  fibrations of  {$Spin(2n,\C)$ and  $\SO(2n,\C)$-Higgs bundles} are the same {.}   {I}n the  case  {of $n=2$ the base is} $H^0(\Sigma,K^2)\oplus H^0(\Sigma,K^2)${,} and  {for $n=3$ it is} $H^0(\Sigma,K^2)\oplus H^0(\Sigma,K^3)\oplus H^0(\Sigma,K^4)$. In order to understand the maps induced on these bases it is necessary to understand exactly the relation between coordinates of a point in the base and the coefficients in the defining equation for the spectral curve. This is completely straightforward for $\SL(n,\C)$, where the two coincide, but less so in the case of $\SO(2n,\C)$ where the relation is complicated by the role of the Pfaffian.  {In fact,} the maps $S\mapsto \hat{S}_{2n}$ (for $n=2,3$) do not unambiguously descend to the base of the fibration. The ambiguity stems from the fact that the induced orthogonal structures on $E_1\otimes E_2$ or $\Lambda^2E$ do not have a canonical orientation. The choice of orientation corresponds on the one hand to a choice of trivialization on the determinant bundles, and on the other hand to a choice of sign in the Pfaffian.  

\begin{remark} {
One should note that whist not done here,  the isogenies could also be understood through the language of Cameral covers.}
\end{remark}

\subsection{The isogeny $\CI_2$ on moduli spaces} \label{iso22invariant} 
Once the isomorphism $\delta: E_1\otimes E_2\simeq\mathcal{O}_{\Sigma}$ is fixed, the map  \eqref{I*bundles} determines a map on the base of the Hitchin fibrations for $\mathcal{M}_{\SL(2,\C)\times\SL(2,\C)}$ and $\mathcal{M}_{\SO(4,\C)}$. The explicit form of the map depends on the generators chosen for the rings of invariant polynomials. Taking the coefficients of the characteristic equation $\det(\varphi-\eta I)$ as generators, it follows from \eqref{S4curve} that this map is given on the generic points in the base by
\begin{align}\label{basemap2}
\mathcal{I}_2: H^0(\Sigma,K^2)\oplus H^0(\Sigma,K^2)&\rightarrow H^0(\Sigma,K^2)\oplus H^0(\Sigma,K^2)\nonumber\\
(a_1,a_2)&\mapsto (2(a_1+a_2), \pm(a_1-a_2)),
\end{align}
\noi where the sign in the last component is determined by the isomorphism $\delta$.  For example, following the notation of Remark \ref{detiso}, if $\{e_1^1\otimes e^1_2,e_1^1\otimes e^2_2,e_1^2\otimes e^1_2,e_1^2\otimes e^2_2\}$ is the positively oriented local frame, then 
\begin{equation*}
\mathrm{Pf}(\Phi_1\otimes I+I\otimes\Phi_2)=a_2-a_1
\end{equation*}

\noi where $a_i=\det(\Phi_i)$, so the second component in \eqref{basemap2} is $-(a_1-a_2)$.

\begin{remark}\label{heuristic} As explained at the beginning of Section \ref{Lie}, the relation between spectral curves and thus the map \eqref{basemap2} {,} can be understood heuristically 
from the relation between the eigenvalues of the $\Phi_i$ and those of $\Phi_1\otimes I+I\otimes\Phi_2$. By Remark \ref{evalsadd}, at least at smooth unramified points, the equations for the spectral curves $S_i$ and $S_4$ are given by
\begin{align*}
S_i:\quad  0=&\eta^2+a_i=(\eta-\eta^{(i)}_1)(\eta+\eta^{(i)}_1),\  i=1,2\\
S_4:\quad 0=&\eta^4+b_2\eta^2+b_3^2=\prod(\eta\pm\eta^{(1)}_1\pm\eta^{(2)}_1)=(\eta^2-(\eta^{(1)}_1+\eta^{(2)}_1)^2)(\eta^2-(\eta^{(1)}_1-\eta^{(2)}_1)^2)\ ,
\end{align*}
\noi from which the relations between $(b_1,b_2,b_3^2)$ and $(a_1,a_2)$ can be deduced.

 \end{remark}
 
 As described in \S \ref{real so}, the vector bundle of an $SO_0(2,2)$-Higgs bundles may be expressed as $(M_1\oplus M_1^*)\oplus (M_2\oplus M_2^*)$, and thus {they are labelled by two integer topological invariants, say $(c_1,c_2)$, corresponding to the degrees of the line bundles $M_1$ and $M_2$ (or, equivalently by classes in $\pi_1(\SO(2,\C))\cong \Z$).} The moduli space $\CM_{\SO_0(2,2)}$ is thus a disjoint union of (possibly disconnected or empty) components
\begin{equation}
\CM_{\SO_0(2,2)}= \bigsqcup_{c_1,c_2} \CM_{\SO_0(2,2)}^{c_1c_2}.
\end{equation}
The moduli space  $\CM_{\SL(2,\R)\times\SL(2,\R)}$ is similarly a disjoint union of (possibly disconnected or empty) components 
\begin{equation}
\CM_{\SL(2,\R)\times\SL(2,\R)}= \bigsqcup_{d_1,d_2} \CM_{\SL(2,\R)\times\SL(2,\R)}^{d_1d_2},
\end{equation}
 {for $d_1,d_2$}  the degrees of the line bundles defining the $\SL(2,\R)\times\SL(2,\R)$-Higgs bundles.  As seen in \cite{N1}, the components are non-empty if and only if $|d_i|\leq g-1$, inequalities which are known as the the Milnor-Wood bounds.
 
\begin{proposition}\label{season2}
The map $\CI_2: \CMt_{\SL(2,\R)\times \SL(2,\R)}\rightarrow \CMt_{\SO_0(2,2)}$
\noi is  a $2^{2g+1}$-fold covering onto the components satisfying $c_1=c_2\ \mathrm{mod}\ 2$ and
$|c_i|\le 2g-2,$ for   $i=1,2$.  The map restricts to maps
\begin{equation}
\CI_2: \CMt_{\SL(2,\R)\times\SL(2,\R)}^{d_1d_2}\rightarrow  \CMt_{\SO_0(2,2)}^{c_1c_2}
\end{equation}
 {for} $c_1=d_1+d_2$ and $c_2=d_1-d_2$.

\end{proposition}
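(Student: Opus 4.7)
The plan is to decompose the statement into three pieces: (i) identify the target components in terms of the source, (ii) verify surjectivity onto the stated components, and (iii) compute the covering degree. Steps (i) and (ii) follow quickly from Proposition \ref{I2realhiggs}; step (iii) is the substantive one and splits naturally into a Jacobian square-root count and an orientation-parity count.

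First I would apply Proposition \ref{I2realhiggs} directly: the target Higgs bundle has line-bundle summands $M_1 = N_1\otimes N_2$ and $M_2 = N_1\otimes N_2^{-1}$, so
\[
c_1 = \deg M_1 = d_1+d_2,\qquad c_2 = \deg M_2 = d_1-d_2.
\]
This immediately yields the component-level map and gives the parity constraint $c_1\equiv c_2\pmod 2$ (as $c_1\pm c_2 = 2d_{1,2}$), which is exactly the condition that the corresponding $\SO_0(2,2)$-bundle lifts to $\mathrm{Spin}$. The Milnor--Wood bound $|d_i|\le g-1$ then gives $|c_i|\le 2g-2$ by the triangle inequality. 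Conversely, from $(c_1,c_2)$ with the stated parity one recovers $(d_1,d_2)=((c_1+c_2)/2,(c_1-c_2)/2)\in\Z^2$, establishing (modulo the Milnor--Wood considerations discussed in the paragraph below \eqref{MWSO_0(2,2)}) that each such component lies in the image.

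For the degree, I would compute the fibre at both the Higgs-bundle and spectral-data levels and show they agree. At the Higgs-bundle level: given $(M_1,M_2)$ and the field $\alpha$ in \eqref{I*phi}, the equations $N_1\otimes N_2=M_1$, $N_1\otimes N_2^{-1}=M_2$ force $N_1^{\otimes 2}=M_1\otimes M_2$; the number of square roots of a line bundle is $|{\rm Jac}(\Sigma)[2]|=2^{2g}$, and then $N_2=N_1^{-1}\otimes M_1$ and the components $\beta_i,\gamma_i$ of $\alpha$ are determined. This produces $2^{2g}$ preimages. The same count appears on the spectral side via Proposition \ref{propso21}: the kernel of $(L_1,L_2)\mapsto p_1^*L_1\otimes p_2^*L_2$ restricted to $\Prym(S_i,\Sigma)[2]$ is generated by the assignments $P\mapsto(\pi_1^*P,\pi_2^*P)$ for $P\in{\rm Jac}(\Sigma)[2]$, since $\pi_i^*$ is injective on a ramified spectral cover and $Nm(\pi_i^*P)=P^2={\mathcal O}$; this again gives a kernel of order $2^{2g}$.

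The extra factor of $2$ giving $2^{2g+1}$ comes from the orientation ambiguity highlighted in Remark \ref{detiso}: the two inequivalent compatible trivialisations of $\det(E_1\otimes E_2)$ correspond to opposite choices of Pfaffian sign, and hence to two distinct points of $\CMt_{\SO_0(2,2)}$ lying over a common point of the base of the Hitchin fibration (as in \eqref{basemap2} the Pfaffian coordinate is $\pm(a_1-a_2)$). Recording this sign flip as a separate involution on the image doubles the $2^{2g}$ count. The main obstacle will be checking that these two sources of multiplicity are genuinely independent and exhaust all preimages, i.e.\ that no further identifications arise from isomorphisms of $\SL(2,\R)\times\SL(2,\R)$-Higgs bundles (for instance from the outer $\Z_2$ swapping the two factors, which changes $(c_1,c_2)\to(c_1,-c_2)$ and so lands in a different component), and to match the two-component structure of $\Prym(\hat S_4,\hat S_4/\hat\sigma_4)$ with the Stiefel--Whitney class on the target so that surjectivity onto the correct component is established.
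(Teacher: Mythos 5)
Your first two steps coincide with the paper's own proof: the component formulas $c_1=d_1+d_2$, $c_2=d_1-d_2$, the parity constraint, and the bound $|c_i|\le 2g-2$ via the Milnor--Wood inequality for $\SL(2,\R)$ are exactly how the paper argues, and your fibre count is in fact the more careful version of the paper's: the paper records the preimages as pairs with $L_1^2=M_1M_2$, $L_2^2=M_1M_2^{-1}$ and concludes ``there are $2^{2g}$ solutions'', while you correctly observe that $N_2=N_1^{-1}\otimes M_1$ is determined by $N_1$, so the fibre is a torsor under ${\rm Jac}(\Sigma)[2]$ with exactly $2^{2g}$ elements; your spectral-data count says the same thing (though note that your argument only shows the antidiagonal copy of ${\rm Jac}(\Sigma)[2]$ is \emph{contained} in the kernel; equality needs a descent argument for the $\Z_2\times\Z_2$-cover $\hat S_4\to\Sigma$, or can be read off from agreement with your Higgs-bundle count).

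The gap is the last step, where you produce the extra factor of $2$ from Remark \ref{detiso}. That ambiguity is a choice made in \emph{defining} $\CI_2$ (see the opening of \S\ref{iso22invariant}: once $\delta$ is fixed, the map is determined), so it cannot enlarge the fibres: the two orientation conventions give two different maps, and under either one a given source point has a single image, while the two ``oppositely oriented'' images are generically distinct points of $\CMt_{\SO_0(2,2)}$ --- they lie over different points $(b_1,\pm b_2)$ of the Hitchin base by \eqref{basemap2} --- hence are never two preimages of one and the same target point. Counting them together conflates the degree of a single map with the union of two maps. In fact the paper's own proof establishes only the $2^{2g}$ count you obtained, and Theorem \ref{teorema2} in the introduction states $2^{2g}$; the exponent $2g+1$ in the statement is not reached by the paper's argument either, so it is not something you can legitimately manufacture this way: the honest conclusion of your computation is a $2^{2g}$-fold covering. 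A secondary point you flagged but deferred (and the paper is equally silent on): surjectivity onto every component with $c_1\equiv c_2\ \mathrm{mod}\ 2$ and $|c_i|\le 2g-2$ would require $|d_i|=|c_1\pm c_2|/2\le g-1$, i.e.\ $|c_1|+|c_2|\le 2g-2$, which is strictly stronger than $|c_i|\le 2g-2$; this Milnor--Wood matching needs to be addressed rather than left as a parenthesis.
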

\begin{proof} The mod 2 congruence condition follows from the fact that any $\SO_0(2,2)$-Higgs bundle of the form \eqref{specsl2} in the image of $\CI_2$, has 
 \begin{align*}
 \deg(M_1)=&\deg(N_1)+\deg(N_2),\\
 \deg(M_2)=&\deg(N_1)-\deg(N_2),
 \end{align*}
 
 \noi where $N_1, N_2$ are line bundles defining $SL(2,\R)$-Higgs bundles (see Section \ref{iso22higgs}).  The bounds on $|c_i|$ follow from the Milnor-Wood bound for $\SL(2,\R)$-Higgs bundles.   The rest of the proposition follows from the observation that the preimage under $\CI_2$ for any Higgs bundle of the form \eqref{specsl2} consists of all $\SL(2,\R)\times \SL(2,\R)$-Higgs bundles defined by $(L_1,\beta_1,\gamma_1), (L_2,\beta_2,\gamma_2)$ with $ L_1^2=M_1M_2$ and $L_2^2=M_1M^{-1}_2$.   If $\deg(M_1)=\deg(M_2)\ \mathrm{mod}\  2$ then there are $2^{2g}$ solutions for $L_1$ and $L_2$.
 \end{proof}

  {Whilst the moduli space of}  $\SL(4,\R)${-Higgs bundles} has $2^{2g}$ Hitchin components,  {the one for} $\SO_0(3,3)${-Higgs bundles} has just one Hitchin component. From the analysis of topological invariants, which are constant on connected components,  {one has}   $4+1=5$ components,  4 coming  from the 4 pairs of $w_2$'s characterizing $\SO(3)\times\SO(3)$ bundles.

\begin{proposition}The  {isogeny $\CI_2$ between moduli spaces of Higgs bundles} takes the $2^{2g}$ Hitchin components to the one Hitchin component, and the other 2 components to the two components (possibly disconnected)  where the two $w_2$'s are the same.\label{hola}
\end{proposition}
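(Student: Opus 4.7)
The strategy is to track topological invariants across $\CI_3$ and invoke functoriality of the Hitchin section. The source $\CM_{\SL(4,\R)}$ has $2^{2g}+2$ components: the $2^{2g}$ Hitchin components, all sitting inside the locus with $w_2(E)=0$, together with one non-Hitchin component with $w_2(E)=0$ and one with $w_2(E)=1$. The target $\CM_{\SO_0(3,3)}$ has five components labelled by $(w_2(W_1),w_2(W_2))\in \Z_2\times\Z_2$, with the $(0,0)$ label splitting further into a Hitchin and a non-Hitchin piece. The proposition then follows from two claims: the pair $(w_2(\Lambda^2_+E),w_2(\Lambda^2_-E))$ produced by $\CI_3$ is always $(w_2(E),w_2(E))$; and $\CI_3$ carries the Hitchin section of $\SL(4,\R)$ into the Hitchin section of $\SO_0(3,3)$.

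For the first claim I would restrict $\CI_3$ to the compact forms, obtaining the homomorphism $\SO(4)\to\SO(3)\times\SO(3)$, $A\mapsto(\Lambda^2_+A,\Lambda^2_-A)$, whose kernel is $\{\pm I\}$. Lifting through the common universal cover $\mathrm{SU}(2)\times\mathrm{SU}(2)$, the covering kernels are $\{(I,I),(-I,-I)\}$ and $\{(\pm I,\pm I)\}$ respectively, so the induced map on fundamental groups $\pi_1(\SO(4))\to\pi_1(\SO(3)\times\SO(3))$ is the diagonal inclusion $\Z_2\hookrightarrow\Z_2\times\Z_2$ (the symmetric index-two subgroup, forced by the swap symmetry of $\Lambda^2_\pm$). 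Translating via the classifying-space description of $w_2\in H^2(\Sigma;\Z_2)$ yields $w_2(\Lambda^2_\pm E)=w_2(E)$, confining the image of $\CI_3$ to components of $\CM_{\SO_0(3,3)}$ with equal $w_2$'s, namely the Hitchin component, the non-Hitchin $(0,0)$ component, and the $(1,1)$ component.

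For the second claim I would appeal to NAHT: the Hitchin components consist of representations factoring through the principal embedding $\iota:\SL(2,\R)\to\SL(4,\R)$, so composition with $\CI_3$ yields representations factoring through $\CI_3\circ\iota$, which by the uniqueness up to conjugation of principal three-dimensional subgroups of a given split real simple group must be conjugate to the principal embedding into $\SO_0(3,3)$. Hence Hitchin representations go to Hitchin representations, and by connectedness the $2^{2g}$ Hitchin components of $\CM_{\SL(4,\R)}$ all land in the single Hitchin component of $\CM_{\SO_0(3,3)}$. The two remaining components of $\CM_{\SL(4,\R)}$ then map to the two remaining equal-$w_2$ components of $\CM_{\SO_0(3,3)}$, namely the non-Hitchin $(0,0)$ component and the $(1,1)$ component, as dictated by the value of $w_2(E)$. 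The main obstacle is this functoriality step; the Stiefel--Whitney computation is a routine Lie-theoretic translation, but pinning Hitchin sections to Hitchin sections requires either the conjugacy uniqueness of principal $\SL(2,\R)$-subgroups combined with NAHT, or a direct check that the canonical Hitchin-section representatives (companion-matrix Higgs fields attached to a choice of $K^{1/2}$) have images under $\CI_3$ lying in the Hitchin section of $\SO_0(3,3)$.
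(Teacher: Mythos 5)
You have read the statement correctly: despite the ``$\CI_2$'' in its wording, Proposition \ref{hola} is about $\CI_3:\CM_{\SL(4,\R)}\rightarrow\CM_{\SO_0(3,3)}$, and the paper gives no proof of it; the nearest argument is the proof of Proposition \ref{propinv1}, which derives the equality of the two Stiefel--Whitney classes from the cited result of \cite{GZ}. Your route to that part is different and in fact sharper: the computation that $\pi_1(\SO(4))\rightarrow\pi_1(\SO(3)\times\SO(3))$ is the diagonal $\Z_2\hookrightarrow\Z_2\times\Z_2$ gives $w_2(\Lambda^2_{\pm}E)=w_2(E)$, which simultaneously confines the image to the equal-$w_2$ components and matches the source label $w_2(E)$ with the target label, something Proposition \ref{propinv1} leaves implicit. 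That $\pi_1$ computation is correct.

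Two points need repair. First, the Hitchin components do \emph{not} consist of representations factoring through the principal embedding $\iota$; only the Fuchsian locus does. What actually carries your argument is the connectedness remark: since $d\CI_3$ is a Lie algebra isomorphism it takes a principal $\mathfrak{sl}(2)$-triple to a principal one, so the image of a single Fuchsian point lies in the Fuchsian locus of the $\SO_0(3,3)$-Hitchin component, and connectedness of each source Hitchin component does the rest (this also avoids the delicate appeal to conjugacy of principal subgroups over $\R$). Second, ``as dictated by the value of $w_2(E)$'' does not by itself place the non-Hitchin $w_2=0$ component of $\CM_{\SL(4,\R)}$ in the non-Hitchin $(0,0)$ component of $\CM_{\SO_0(3,3)}$, because the Hitchin component of the target also has invariants $(0,0)$; you must rule out that it lands there. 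Both loose ends are closed at once by noting that $\CI_3$ has kernel $\{\pm I\}$ and so identifies $\SO_0(3,3)$ with $\SL(4,\R)/\{\pm I\}\cong\mathrm{PSL}(4,\R)$: the $2^{2g}$ Hitchin components upstairs are precisely the components lying over the single Hitchin component downstairs, so they map onto it and no other component of $\CM_{\SL(4,\R)}$ can map into it. Finally, if one reads ``takes \dots to'' as asserting surjectivity onto the equal-$w_2$ components, you still need the observation from the proof of Proposition \ref{propinv1} that every pair of $\SO(3,\C)$-bundles with equal $w_2$ arises as $\Lambda^2_{\pm}E$ for some orthogonal rank $4$ bundle $E$.
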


  {As seen before,}  the map   $\CI_2$
  constructed in \S \ref{isosingular} is surjective onto  {some of the}   components of $\mathcal{M}_{\SO_0(2,2)}$. The components in the image  correspond to those components in the representation variety $Rep(\pi_1(\Sigma),\SO_0(2,2))$  {for}  which the representations lift   to $\SL(2,\R)\times\SL(2,\R)$. 
  We shall denote by $\mathcal{M}^0_{\SO_0(2,2)}$ the union of components of $\mathcal{M}_{\SO_0(2,2)}$ obtained through (\ref{I*bundles}) and (\ref{I*phi}) with 
$\deg(M_1)=\deg(M_2)\ \mathrm{mod}\ 2\ .$
  Equivalently,  let $Rep^0(\pi_1(\Sigma),\SO_0(2,2))$   be the union of components of   $Rep(\pi_1(\Sigma),\SO_0(2,2))$ which correspond to the components in $\mathcal{M}_0(\SO_0(2,2)$. 
 
\begin{corollary}\label{cor1} The structure group of an $\SO_0(2,2)$-Higgs bundle lifts to $\SL(2,\R)\times\SL(2,\R)$ if and only if the $\SO_0(2,2)$-Higgs bundle lies in $\mathcal{M}^0_{\SO_0(2,2)}$.  Equivalently, a reductive surface group representation into $\SO_0(2,2)$ lifts to a representation into $\SL(2,\R)\times\SL(2,\R)$ if and only if the representation lies $Rep^0(\pi_1(\Sigma),\SO_0(2,2))$.
\end{corollary}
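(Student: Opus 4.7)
The plan is to deduce the corollary directly from Proposition \ref{season2} together with non-Abelian Hodge theory (NAHT), by matching the topological condition $\deg(M_1)=\deg(M_2)\ \mathrm{mod}\ 2$ with the obstruction to lifting the structure group of the principal bundle underlying an $\SO_0(2,2)$-Higgs bundle.

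First I would establish the ``if'' direction. By the definition of $\CM^0_{\SO_0(2,2)}$ as the union of components produced by \eqref{I*bundles} and \eqref{I*phi} under the parity constraint, any point $(E,\Phi)\in\CM^0_{\SO_0(2,2)}$ is in the image of $\CI_2$. The argument given in the last paragraph of the proof of Proposition \ref{season2} then exhibits an explicit preimage: given the rank-two summands $M_1\oplus M_1^{*}$ and $M_2\oplus M_2^{*}$ with $\deg(M_1)\equiv\deg(M_2)\mod 2$, one may choose square roots $L_1$ of $M_1M_2$ and $L_2$ of $M_1M_2^{-1}$ (there are $2^{2g}$ choices of each), from which an $\SL(2,\R)\times\SL(2,\R)$-Higgs bundle is constructed whose image under $\CI_2$ is $(E,\Phi)$. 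This explicitly lifts the structure group through the isogeny $\mathcal{I}_2:\SL(2,\R)\times\SL(2,\R)\rightarrow\SO_0(2,2)$.

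For the ``only if'' direction, suppose the structure group of an $\SO_0(2,2)$-Higgs bundle lifts. The lift at the level of $H^{\C}$-principal bundles, together with the lift of the Higgs field under $d\CI_2$ in \eqref{I2algebra}, produces a pair of $\SL(2,\R)$-Higgs bundles whose image under $\CI_2$ is the original $\SO_0(2,2)$-Higgs bundle. But Proposition \ref{season2} shows that any such image must satisfy $c_1\equiv c_2 \mod 2$ (and the Milnor-Wood-type bounds $|c_i|\le 2g-2$), which is exactly the defining condition for $\CM^0_{\SO_0(2,2)}$. Hence $(E,\Phi)\in\CM^0_{\SO_0(2,2)}$.

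The equivalent statement for representations follows by transporting the above across the NAHT correspondence. The induced map on moduli spaces $\CI_2:\CM_{\SL(2,\R)\times\SL(2,\R)}\rightarrow\CM_{\SO_0(2,2)}$ agrees, up to the NAHT homeomorphism, with the map on representation varieties induced by post-composition with $\mathcal{I}_2$; so a representation $\rho\in\mathrm{Rep}(\pi_1(\Sigma),\SO_0(2,2))$ lifts to $\SL(2,\R)\times\SL(2,\R)$ if and only if the corresponding Higgs bundle lies in the image of $\CI_2$, i.e.\ in $\CM^0_{\SO_0(2,2)}$, i.e.\ $\rho\in \mathrm{Rep}^0(\pi_1(\Sigma),\SO_0(2,2))$. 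The main subtlety to check is that the lift of principal bundles and lift of representations are indeed governed by the same topological obstruction, but this is exactly the content of the parity condition on $(c_1,c_2)$ extracted in the proof of Proposition \ref{season2}, so no further work is required beyond invoking that proposition.
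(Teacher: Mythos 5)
Your proposal is correct and follows essentially the same route as the paper, which states Corollary \ref{cor1} as an immediate consequence of Proposition \ref{season2} (surjectivity of $\CI_2$ onto exactly the components with $\deg(M_1)\equiv\deg(M_2)\bmod 2$) together with the NAHT identification of image components with liftable representations, without writing out a separate proof. Your version simply makes explicit the two directions — the square-root construction $L_1^2\simeq M_1M_2$, $L_2^2\simeq M_1M_2^{-1}$ for the lift, and the parity constraint on any image point for the converse — which is precisely the argument the paper intends.
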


\begin{remark}
By realising $SO_0(2,2)$-Higgs bundles in terms of rank 2-Higgs bundles, one can understand the monodromy action studied in \cite{mono} for rank 4 Higgs bundles in terms of monodromy of lower rank Higgs bundles. Indeed, taking  $b_1$ and $b_2$ as in Eq.~\eqref{S4curve} one recovers the rank 4 monodromy as a product of actions coming from the rank 2-Hitchin systems. 
\end{remark}

\subsection{The isogeny $\CI_3$ on moduli spaces}\label{inv33}
 
To fully specify the map induced by $\CI_3$ (as in \eqref{I3onHiggs}) on $\mathcal{M}_{\SL(4,\C)}$  {one needs} to specify the trivialization $\delta:\det(\Lambda^2(E))\simeq\mathcal{O}_{\Sigma}$.  Then, using the coefficients of the  characteristic equation $\det(\varphi-\eta I)$ as generators for the rings of invariant polynomials, it follows from \eqref{bvsa} that the map on the generic points   {of the Hitchin base is} \begin{align}\label{basemap3}
\mathcal{I}_3: H^0(\Sigma,K^2)\oplus H^0(\Sigma,K^3)\oplus  H^0(\Sigma,K^4)&\rightarrow H^0(\Sigma,K^2)\oplus H^0(\Sigma,K^3)\oplus  H^0(\Sigma,K^4)\nonumber\\
(a_2,a_3, a_4)&\mapsto (2a_2, \pm a_3, a_2^2-4a_2),
\end{align}
\noi where the sign in the second component is determined by the isomorphism $\delta$. 

\begin{remark}
For both  {n=2,3}  the  {variety $Prym(\hat{S}_{2n}, \hat{S}_{2n}/_{\sigma}))$}  has two components. In the case  {$n=3$},  {given} $\hat{S}_6$   the spectral curve defined by $(2a_2, a_2^2-4a_2, a_3^2)$,    {the two}  components occur in the fibers over both points $(2a_2, \pm a_3, a_2^2-4a_2)$ in the    Hitchin  {base}.  {However, o}n each fiber only one of the components is in the image of the map induced by $\CI_3$.  A similar phenomenon occurs for $n=2$.
\end{remark}

\begin{remark} The map \eqref{basemap3} can be understood heuristically from eigenvalue considerations in the same way as \eqref{basemap2}, i.e. as explained in Remark \ref{heuristic}  \end{remark}

As a consequence of its special orthogonal structure, the vector bundle in an $\SL(4,\R)$-Higgs bundle is classified topologically by a second Stiefel-Whitney class.  The moduli space $\CM_{SL(4,\R)}$ thus decomposes into components (not necessarily connected) labelled by this $\mathbb{Z}_2$-valued invariant. The moduli space $\CM_{SO_0(3,3)}$ likewise has a decomposition into components labelled by a pair of $\mathbb{Z}_2$-valued invariants corresponding to the second Stiefel-Whitney classes of the two rank three bundles.  Using superscripts to indicate these characteristic classes, we can thus write
\begin{eqnarray}
\mathcal{M}_{\SL(4,\R)} =\coprod_{a~\in~ H^2(\Sigma,\Z_2)}\mathcal{M}^a_{\SL(4,\R)},~{~\rm~and~}~
\mathcal{M}_{\SO_0(3,3)}=\coprod_{b_i~\in ~H^2(\Sigma,\Z_2)}\mathcal{M}^{(b_1,b_2)}_{\SO_0(3,3)}.\nonumber
\end{eqnarray}

\begin{proposition}\label{propinv1}
The map $\CI_3 :\CMt_{\SL(4,\R)}\rightarrow \CMt_{\SO_0(3,3)}$
\noi is  a $2^{2g}$-fold covering onto the components satisfying $b_1=b_2\ \mathrm{mod}\ 2$.  {For $a=w_2(E)$   the orthogonal rank 4 bundle $E$, and $b=w_2(\Lambda^2(E)$, t}he map restricts to   
\begin{equation}\label{oncomponents}
\CI_3 :\CMt^a_{\SL(4,\R)}\rightarrow \CMt^{(b,b)}_{\SO_0(3,3)}.
\end{equation}

\end{proposition}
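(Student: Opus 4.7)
The plan is to split the statement into three ingredients: the topological constraint $b_1 \equiv b_2 \bmod 2$ on the image, the explicit $(a,b)$-correspondence for the restriction, and the $2^{2g}$ fibre count. All three should follow from the key observation that the isogeny $\CI_3 : \SL(4,\R) \to \SO_0(3,3)$ realises $\SL(4,\R) \cong \mathrm{Spin}(3,3)$ as the universal double cover on the group level, in direct parallel with the argument given for $\CI_2$ in Proposition \ref{season2}.

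First I would establish the topological constraint. An $\SO_0(3,3)$-bundle on $\Sigma$ corresponds to a pair $(W_1, W_2)$ of oriented rank $3$ orthogonal bundles with trivialised determinants, and such a bundle lifts to $\mathrm{Spin}(3,3)$ if and only if $w_2(W_1) = w_2(W_2)$ in $H^2(\Sigma, \Z_2)$. Any Higgs bundle in the image of $\CI_3$ comes by construction from a Spin lift, namely the original $(E,\Phi)$ viewed through the identification $\mathrm{Spin}(3,3) = \SL(4,\R)$, so the decomposition $\Lambda^2 E = \Lambda^2_+ E \oplus \Lambda^2_- E$ obtained via Proposition \ref{realI3} must satisfy $w_2(\Lambda^2_+ E) = w_2(\Lambda^2_- E)$. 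To identify this common value in terms of $a = w_2(E)$, one applies the Whitney sum formula to $\Lambda^2 E = \Lambda^2_+ E \oplus \Lambda^2_- E$ together with standard characteristic-class identities expressing $w_i(\Lambda^2 E)$ in terms of the $w_j(E)$; this gives $b = b(a)$ and verifies $b_1 = b_2 = b$ as claimed.

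Next, for the $2^{2g}$ fibre count, the most conceptual route is through non-Abelian Hodge Theory: the map $\CI_3$ between Higgs moduli spaces corresponds, via NAHT, to post-composition by the short exact sequence $1 \to \Z_2 \to \SL(4,\R) \to \SO_0(3,3) \to 1$. Given a reductive representation $\rho : \pi_1(\Sigma) \to \SO_0(3,3)$ admitting at least one lift, the set of lifts is a torsor over $\Hom(\pi_1(\Sigma), \Z_2) \cong H^1(\Sigma, \Z_2)$, which has cardinality $2^{2g}$. As a parallel check on spectral data, by Proposition \ref{PrymMap} the map $L \mapsto \CI_3(L)$ restricts to a homomorphism $\Prym(S,\Sigma)[2] \to \Prym(\hat{S}_6, \hat{S}_6/\sigma)[2]$, and I would identify its kernel with pullbacks $\pi^* L_0$ for $L_0 \in \mathrm{Jac}(\Sigma)[2]$, observing that for such $L = \pi^* L_0$ one gets $\mathcal{L} = p_1^* \pi^* L_0 \otimes p_2^* \pi^* L_0 = \pi_0^*(L_0^2) = \mathcal{O}$, so these automatically lie in the kernel of $\CI_3$ and contribute exactly $|H^1(\Sigma, \Z_2)| = 2^{2g}$ elements.

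The hard part will be the direct spectral-data computation that this kernel is \emph{exactly} $H^1(\Sigma, \Z_2)$ and no larger, since a priori further cancellations could occur through the antiinvariant projection $\CL \mapsto \CL_-$ and the twist by $T^{-1}$ in the definition $\CI_3(L) = \CL_- \otimes T^{-1}$. Once the kernel size is pinned down, the restriction \eqref{oncomponents} to specific topological components follows by continuity: the Stiefel-Whitney classes are locally constant invariants on the moduli space, so the image of $\CMt^a_{\SL(4,\R)}$ lies in a single $(b_1, b_2)$-component, and by the topological constraint already established this component must be of the form $(b, b)$ with $b = b(a)$.
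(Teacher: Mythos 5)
Your proposal reaches the same conclusions as the paper but by a partly different route, and most of it is sound. For the constraint $b_1=b_2$ the paper argues directly on bundles: by Proposition \ref{realI3} the image of $(E,\Phi)$ is $(\Lambda^2_+E\oplus\Lambda^2_-E,\cdot)$, and it invokes \cite[Proposition 1.8]{GZ} for the identity relating $w_2(\Lambda^2_{\pm}E)$ to $w_2$ of the original bundle, which yields $b_1=b_2$ immediately; your route via the lifting obstruction for $1\to\Z_2\to\SL(4,\R)\to\SO_0(3,3)\to 1$ (equivalently $\SO(4)\to\SO(3)\times\SO(3)$ on maximal compacts) is a legitimate alternative and is the same mechanism the paper describes informally in Section \ref{maps}. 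For the degree, the paper simply observes that the $2^{2g}$ preimages of $(\Lambda^2E,\Phi\otimes I+I\otimes\Phi)$ are the twists $E\otimes L_0$ with $L_0$ of order two in $\mathrm{Jac}(\Sigma)$, since $\Lambda^2(E\otimes L_0)=\Lambda^2E\otimes L_0^{2}=\Lambda^2E$; your NAHT argument (lifts of a representation form a torsor over $H^1(\Sigma,\Z_2)$) is the representation-variety shadow of the same fact, and it actually supplies the exhaustion half -- that \emph{every} preimage is such a twist -- which the paper leaves implicit. In particular the ``hard part'' you defer (bounding the kernel of $L\mapsto\CI_3(L)$ on $2$-torsion Prym points) is not needed if you run the count this way; your spectral-data check that $\pi^*\mathrm{Jac}(\Sigma)[2]$ lies in the kernel is consistent with, and equivalent to, the paper's twisting statement.

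Two genuine soft spots remain. First, your identification of the common value $b$ in terms of $a$ via the Whitney sum formula cannot work as stated: since the bundles are oriented, $w_1(\Lambda^2_{\pm}E)=0$ and Whitney only gives $w_2(\Lambda^2_+E)+w_2(\Lambda^2_-E)=w_2(\Lambda^2E)$, which constrains the sum but can never determine the common value of $b_1$ and $b_2$; what is needed is precisely the Grove--Ziller-type identity for $w_2(\Lambda^2_{\pm}E)$ (equivalently, a computation of the maps induced on $\pi_1$ by the two projections $\SO(4)\to\SO(3)$), which is the one external input in the paper's proof. Second, the proposition asserts the map is a covering \emph{onto} the components with $b_1=b_2$, and you only prove containment of the image; the paper closes this by noting that any pair of $\SO(3,\C)$-bundles with equal second Stiefel--Whitney class arises as $\Lambda^2_{\pm}E$ for some orthogonal rank $4$ bundle $E$. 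Your ``if and only if'' lifting criterion can be used to the same effect, but you should state that step explicitly (and note that the Higgs field lifts automatically because the isogeny identifies the isotropy representations), rather than leaving only the one-directional topological constraint.
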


\begin{proof} It follows from Proposition \ref{realI3} that the image of $\CMt^a_{\SL(4,\R)}$ lies in  {the} component $\CMt^{(b_1,b_2)}_{\SO_0(3,3)}${,} where $b_1=w_2(\Lambda^2_+(E))$ and $b_2=w_2(\Lambda^2_-(E))$ for some $\SO(4,\C)$ vector bundle $E$ with $w_2(E)=a$.   {Moreover, from \cite[Proposition 1.8]{GZ} it follows that}    $w_2(\Lambda^2(E)_{\pm})=w_2(\Lambda^2(E))$, so in particular $b_1=b_2$. However, any pair of $\SO(3,\C)$-bundles with the same second Stieffel-Whitney class arises in this way, i.e.. as $\Lambda^2(E)_{\pm}$ where $E$ is an $\SO(4,\C)$ bundle. 
 {Finally, i}f $(\Lambda^2E, \Phi\otimes I+I\otimes\Phi)$ represents a point in the image of $\CI_3$, then the $2^{2g}$ preimages come from twisting $E$ by any point of order two in $Jac(\Sigma)$.  
\end{proof}

\end{document}